\newtheorem{Lemma}{Lemma}[section]
\newtheorem{remark}[Lemma]{Remark}
\newtheorem{theorem}[Lemma]{Theorem}
\newtheorem{lemma}[Lemma]{Lemma}
\newtheorem{proposition}[Lemma]{Proposition}
\newtheorem{corollary}[Lemma]{Corollary}
\newtheorem{example}[Lemma]{Example}
\newcommand{\Cal}[1]{{\mathcal #1}}
\newcommand{\End}{\operatorname{End}}
\newcommand{\Tor}{\operatorname{Tor}}
\newcommand{\Hom}{\operatorname{Hom}}
\newcommand{\Ext}{\operatorname{Ext}}
\newcommand{\Rtop}{\operatorname{-top}}
\newcommand{\Mod}{\operatorname{Mod-\!}}
\newcommand{\lMod}{\mbox{\rm -Mod}}
\DeclareMathOperator{\pdim}{p.dim}
\DeclareMathOperator{\Add}{Add}
\newcommand{\cmat}{\left(\begin{array}}
\newcommand{\fmat}{\end{array}\right)}
\newcommand{\Z}{\mathbb{Z}}
\begin{document}
   \title{Covering classes, strongly flat modules, and completions}
  \author[Alberto Facchini]{Alberto Facchini}
\address{Dipartimento di Matematica, Universit\`a di Padova, 35121 Padova, Italy}
 \email{facchini@math.unipd.it}
\thanks{The first author was partially supported by Dipartimento di Matematica ``Tullio Levi-Civita'' of Universit\`a di Padova (Project BIRD163492/16 ``Categorical homological methods in the study of algebraic structures'' and Research program DOR1714214 ``Anelli e categorie di moduli''). The second author was supported by a grant from  IPM}
 \author[Zahra Nazemian]{Zahra Nazemian}
\address{School of Mathematics, Institute for Research in Fundamental Sciences (IPM),
	P. O. Box: 19395-5746, Tehran, Iran}
 \email{z\_nazemian@yahoo.com}
   \keywords{Covering class, Strongly flat module, Completion, Cotorsion module, $R$-topology. \\ \protect \indent 2010 {\it Mathematics Subject Classification.} Primary 16E30, 16W80.
Secondary 18G15.}
      \begin{abstract}  We study some closely interrelated notions of Homological Algebra: (1) We define a topology on modules over a not-necessarily commutative ring $R$ that coincides with the $R$-topology
 defined by 
Matlis when $R$ is commutative. (2) We consider the class $ \mathcal{SF}$ of strongly flat modules when $R$ is a right Ore domain with classical right quotient ring $Q$. Strongly flat modules are flat. The completion of $R$ in its $R$-topology is a strongly flat $R$-module. (3) We consider some results related to the question whether 
$ \mathcal{SF}$ a covering class implies $ \mathcal{SF}$ closed under direct limit. 
This is a particular case of the so-called Enochs' Conjecture (whether covering classes are closed
 under direct limit). 
 
 Some of our results concerns right chain domains. For instance, we show
that if the class of strongly flat modules  over a right chain domain $R$ is 
 covering, then 
$R$ is right invariant. In this case, flat $R$-modules are strongly flat.
\end{abstract}

    \maketitle

\section{Introduction}

The aim of this paper is to highlight some relations between completions, strongly flat modules and perfect rings in the non-commutative case. We explore some connections between some notions of Homological Algebra (cotorsion modules) and topological rings (completions in some natural topologies). These connections are well known for modules over commutative rings, thanks to Matlis, who proved that the completion in the $R$-toplogy for an integral domain $R$ is closely related to the cotorsion completion functor $\Ext_R^1(K,-)$. Here $Q$ is the field of fractions of $R$ and $K:=Q/R$. We investigate these connections in the non-commutative case, defining a suitable $R$-topology on any module over a not-necessarily commutative ring $R$. This leads us to the study of strongly flat modules, because the completion of $R$ in its $R$-topology turns out to be a strongly flat $R$-module (Theorem~\ref{4.9}).

We consider strongly flat modules over non-commutative rings as defined in \cite[Section~3]{submitted}.
The class of strongly flat modules lies between the class of projective modules and the class of flat modules.  
In particular, we study when the class of strongly flat modules is covering, because this is related to an open problem posed by Enochs, whether ``every covering class is closed under direct limit" (see for example \cite [Open problem 5.4] {approx}). 
Since flat modules are direct limits of projective modules, the class of strongly flat modules is closed under direct limits if and only if flat modules are strongly flat. 
 Bazzoni and Salce  \cite{silsal} gave a complete answer to this question for modules  over commutative domains, completely determining when  the class of strongly flat modules  over a commutative domain  is covering. 
Subsequently,  Bazzoni and Positselski generalized this to arbitrary commutative rings in \cite{Leonidsilvana}. They proved that, for a commutative ring $R$, 
the class $ \mathcal{SF}$ of strongly flat  modules is covering   if and only if 
flat modules are strongly flat, if and only if 
$R/aR$ is a perfect ring for every  regular element $a \in R$. In our Example~\ref{5.18}, we will show that there exist non-invariant chain domains $R$ for which $\End(R/I)$ is perfect for every non-zero principal right or left ideal
$I$  of $R$, but the class of strongly flat left $R$-modules is not covering. Very recent papers related to these topics are the articles \cite{BP,P}
by Bazzoni and Positselski.

For a commutative ring $R$, the set of regular elements is always an Ore set, and if $Q$ denotes the classical quotient ring of $R$,  the class of strongly flat modules is $^\bot \{Q^\bot \} $ \cite{ FSALCE}. 
The  generalization of strongly flat modules to non-commutative rings given in \cite {submitted} depends on the choice of the overring $Q$ of $R$. More precisely, if 
$\varphi \colon R\to Q$  is a bimorphism in the category of rings, that is, $\varphi$ is both a monomorphism and an epimorphism, we assume that ${}_RQ$ is a flat left $R$-module. We view at $R$ as a subring of $Q$ and $\varphi \colon R\to Q$ as the inclusion.
Then a  left $R$-module $_RM$ is {\em Matlis-cotorsion} if $\Ext^1(_RQ,{}_RM)=0$ \cite{submitted}.
Let $\mathcal {MC}$ denote the  class of Matlis-cotorsion left $R$-modules. For any  class of left $R$-modules $ \mathcal {A}$, set $ ^ \bot \mathcal {A}:= \{ \,B\in R\lMod  \mid \Ext_R ^1 (B, A) = 0$ for every $A \in \mathcal {A}\,\}$ and $\mathcal {A}^ \bot := \{ \,B\in R\lMod  \mid\Ext_R ^1 (A, B) = 0$ for every $A \in \mathcal {A}\,\}$. 
A left $R$-module is {\em strongly flat} if it is in  $ ^\bot \mathcal {MC} $.
The class of strongly flat left $R$-modules will be denoted by $\mathcal {SF}$.
By
\cite  [Theorem 6.11] {approx},  the cotorsion pair $(\mathcal{SF} , \mathcal{MC})$ is
complete, that is, every left $R$-module has a special
$\mathcal{MC}$-preenvelope (or, equivalently, every left module has a special $\mathcal{SF}$-precover).
Thus, by \cite  [Corollary  6.13] {approx}, the class $\mathcal{SF} $ consists of
all direct summands of
modules $N$ such that $N$ fits into an exact sequence of the form
$$ 0  \to F \to N \to G \to 0,$$
where $F$ is a free $R$-module and $G$ is $\{Q\}$-filtered. For the terminology, see \cite{submitted}. 

Whenever $R$ is a right Ore domain, i.e., the class of regular elements is a right Ore set,
the class of strongly flat left $R$-modules is the class $^\bot \{Q^\bot\} $, where   $Q$ is the classical right quotient ring of $R$.

Several of our results about strongly flat modules are for modules over a nearly simple chain domain. Recall that a chain domain $R$, that is, a not-necessarily commutative integral domain for which the modules $R_R$ and $_RR$ are uniserial, is {\em nearly simple} if it has exactly three two-sided ideals, necessarily $R$, its Jacobson radical $J(R)$ and $0$. The reason why we concentrate on chain domains $R$ with classical quotient ring $Q$ is due to the fact that for these rings the $R$-module $_RK:=Q/R$ is uniserial, and thus, in the study of $\End(_RK)$, we can take advantage of our knowledge of the endomorphism rings of uniserial modules \cite{DungAlberto, DungFacchini, Facchinitransaction, Facsal, Pavel, puninsky}. In our Example~\ref{5.18}, we also take advantage of our knowledge of the endomorphism rings of cyclically presented modules over local rings \cite{Aminis}.

If $R$ is a right chain domain and the class of strongly flat $R$-modules is 
 covering, then 
$R$ is right invariant, that is, $aR = Ra$ for every $a \in R$. In this case, flat modules are strongly flat
(equivalently, the class $ \mathcal{SF}$ of strongly flat modules is closed under direct limit). 

We began this paper in September 2017, when both of us where visiting the Department of Algebra of Charles University in Prague, and continued in March 2018 when the first named author was visiting the IPM (Institute for Research in Fundamental Sciences) in Tehran. We are very grateful to both institutions  for their hospitality.

\section{The $R$-topology} \label{completions}

In Sections \ref{completions}, \ref{3} and \ref{4} of this paper, we suppose that {\em we have a ring $R$ and a multiplicatively closed subset $S$ of $R$ satisfying:} (1) {\em If $a,b\in R$ and $ab\in S$, then $a\in S$.} (2) {\em $S$ is a right Ore set in $R$.} (3) {\em The elements of $S$ are regular elements of~$R$}. (4) {\em The right ring of quotients $Q:=R[S^{-1}]$ of $R$ with respect to $S$ is a directly finite ring}. That is, our setting is that of \cite[Section~4]{submitted}. 

Correspondingly, we have  a Gabriel topology $\Cal G$ on $R$ consisting of all the right ideals $I$ of $R$ with $I\cap S\ne\emptyset$ (cf.~\cite[\S VI.6]{20}). In particular, the Gabriel topology  $\Cal G$ consists of dense right  ideals of $R$, the canonical embedding $\varphi \colon R\to Q:=R[S^{-1}]$ is an epimorphism in the category of rings, we view $R$ as a subring of $Q$ and $\varphi$ as the inclusion mapping, and ${}_RQ$ turns out to be a flat left $R$-module \cite[\S XI.3]{20}.
There is a hereditary torsion theory $(\Cal T,\Cal F)$ on $\Mod R$ in which
the torsion submodule of any right $R$-module $M_R$ consists of all the elements $x\in M_R$ for which there exists an element $s\in S$ with $xs=0$.  If we indicate the torsion submodule of $M$ by  
$t(M)$, then clearly $t(M) \otimes _R Q = 0$. A right $R$-module $M_R$ is in $\Cal F$, that is, is torsion-free, if and only if right multiplication $\rho_s\colon M_R\to M_R$ by $s$ is an abelian group monomorphism for every $s\in S$. Dually, we will say that a right $R$-module $M_R$ is {\em divisible} if right multiplication $\rho_s\colon M_R\to M_R$ by $s$ is an abelian group epimorphism for every $s\in S$, that is, if $Ms=M$ for every $s\in S$. Every homomorphic image of a divisible right $R$-module is divisible. If $A$ is a submodule of a right $R$-module $B_R$ and both $A_R$ and $B/A$ are divisible, then $B_R$ is divisible. Any sum of divisible submodules is a divisible submodule, so that every right $R$-module $M_R$ contains a greatest divisible submodule, denoted by $d(M_R)$. A right $R$-module $M_R$ is {\em reduced} if $d(M_R)=0$. For every module $M_R$, $M_R/d(M_R)$ is reduced.

We have that $\Cal G=\{\,I\mid I$ is a right ideal of $R$, and $\varphi(I)Q=Q\,\}$, and $\Cal G$ has a basis consisting of the principal right ideals $sR$, $s\in S$. Let $M_R$ be any right $R$-module.
By \cite [XI, Proposition 3.4] {20},
 the kernel of the canonical right $R$-module morphism $M_R\to M\otimes_R Q$ is
equal to $t(M)$. Note that if we set $K : = Q/R$, then 
$_RK_R$ is an $R$-$R$-bimodule and  $t(M_R)\cong\Tor_1^R(M_R, {}_RK)$ (see (15) and (16) in \cite[Section 3]{submitted}).

\bigskip

We now define a topology on any right $R$-module in the attempt of generalizing the $R$-topology studied by 
Matlis \cite{dim} for a commutative ring $R$. Our definition is as follows. Let $R$ be any ring with identity, not 
necessarily commutative, and $S$ be a subset of $R$ with the properties written at  the beginning of this section.
Given any right $R$-module $M_R$, the {\em $R$-topology} on $M_R$ has a 
neighborhood base of $0$ consisting, for every non-empty finite set of elements $s_1,\dots,s_n\in S$, of the 
submodules $$U(s_1,\dots,s_n):=\{\,x\in M_R\mid xR\subseteq Ms_1\cap\dots\cap Ms_n\,\}$$ of $M_R$. For the 
regular right module $R_R$, the 
$R$-topology on $R$ has a neighborhood base of $0$ consisting, for every non-empty finite set of elements $s_1,\dots,s_n\in S$, of the right ideals $$U(s_1,\dots,s_n):=\{\,x\in R\mid xR\subseteq Rs_1\cap\dots\cap Rs_n\,\}$$ of $R$.

\begin{Lemma} On the right $R$-module $R_R$, the right ideals $U(s)$ are two-sided ideals of $R$, $U(s)$ is the annihilator of the left $R$-module $R/Rs$, and the $R$-topology is a ring topology on $R$.\end{Lemma}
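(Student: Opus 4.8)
The plan is to derive all three assertions from a single identification: for each $s\in S$ the set $U(s)=\{x\in R\mid xR\subseteq Rs\}$ coincides with the left annihilator $\ann({}_R(R/Rs))$ of the cyclic left $R$-module $R/Rs$. First I would observe that $Rs$ is a left ideal of $R$, so $R/Rs$ is indeed a left $R$-module, and that for $x\in R$ the equality $x\cdot(R/Rs)=0$ means exactly that $xr\in Rs$ for every $r\in R$, i.e.\ that $xR\subseteq Rs$; this is the claimed identification. Since the annihilator of any module is automatically a two-sided ideal, this simultaneously yields the first two statements of the Lemma. (Should one prefer a hands-on verification, the right-ideal property of $U(s)$ is immediate from $(xr)R\subseteq xR$, and the left-ideal property follows from $a(xR)=(ax)R$ together with $aRs=(aR)s\subseteq Rs$ for every $a\in R$.)

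For the statement that the $R$-topology is a ring topology on $R$, I would proceed in two steps. Step one: record that $U(s_1,\dots,s_n)=\bigcap_{i=1}^n U(s_i)$, so that every basic neighborhood of $0$ in the $R$-topology on $R_R$ is a finite intersection of the two-sided ideals $U(s_i)$, hence again a two-sided ideal of $R$; moreover this family is closed under finite intersections and is nonempty (here one uses $S\neq\emptyset$, and then hypothesis~(1) on $S$ forces $1\in S$, so that $U(1)=R$ belongs to the base). Consequently the family $\{U(s_1,\dots,s_n)\}$ is a neighborhood base of $0$ for a (not necessarily Hausdorff) group topology on $(R,+)$. Step two: invoke the standard criterion under which such a group topology on a ring is a ring topology, namely that for every basic neighborhood $U$ of $0$ and every $a\in R$ there exist basic neighborhoods $V',V'',V'''$ of $0$ with $aV'\subseteq U$, $V''a\subseteq U$ and $V'''V'''\subseteq U$. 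Since the basic $U$ are two-sided ideals, all three inclusions hold with $V'=V''=V'''=U$, because $aU\subseteq U$, $Ua\subseteq U$ and $U\cdot U\subseteq U$.

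I do not anticipate a genuine obstacle: the only delicate point is making sure that $xR\subseteq Rs$ — rather than the weaker condition $x\in Rs$ — is what defines $U(s)$, since it is precisely this that makes $U(s)$ two-sided and makes the annihilator description hold. Everything else is bookkeeping with the neighborhood base together with the elementary fact that a topology admitting a base of two-sided ideals at $0$ is automatically a ring topology.
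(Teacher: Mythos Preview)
Your proposal is correct and follows essentially the same approach as the paper: identify $U(s)$ with $\ann({}_R(R/Rs))$ to get the two-sided ideal property, then use that a base of two-sided ideals at $0$ automatically yields a ring topology. The only difference is cosmetic---the paper cites Stenstr\"om for the last step, whereas you verify the continuity-of-multiplication conditions by hand.
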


\begin{proof} Clearly, $U(s)=\{\,x\in R\mid xR\subseteq Rs\,\}$ is the annihilator of the cylic left $R$-module $R/Rs$, and hence $U(s)$ is a two-sided ideal. Moreover, $R$ is a right linearly topological ring \cite[p.~144]{20}, because every filter of two-sided ideals of a ring is a fundamental system of neighborhoods of $0$ for a right and left linear topology on the ring \cite[p.~144]{20}.\end{proof}

We will use $R_{R\Rtop}$ to denote the topological ring $R$ with the $R$-topology.

\begin{Lemma}\label{12.7} Every right $R$-module, with respect to its $R$-topology,  is a linearly topological module over the topological ring $R_{R\Rtop}$.\end{Lemma}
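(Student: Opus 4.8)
The plan is to check that $M_R$ equipped with its $R$-topology satisfies the two requirements for being a linearly topological module over the topological ring $R_{R\Rtop}$: first, that its topology is linear, i.e. admits a neighbourhood base of $0$ consisting of $R$-submodules (which automatically makes $(M_R,+)$ a topological abelian group), and second, that scalar multiplication $\mu\colon M_R\times R_{R\Rtop}\to M_R$, $(x,r)\mapsto xr$, is continuous. To keep the two families of basic neighbourhoods of $0$ apart, I write $U_M(s_1,\dots,s_n)$ for the one in $M_R$ and $U_R(s_1,\dots,s_n)$ for the one in $R_{R\Rtop}$.

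First I would verify that each $U_M(s_1,\dots,s_n)$ is an $R$-submodule of $M_R$: if $xR,yR\subseteq Ms_1\cap\dots\cap Ms_n$ then $(x+y)R\subseteq xR+yR$ lies there too, and $(xr)R\subseteq xR$ does as well for every $r\in R$. These submodules are closed under finite intersection, since $U_M(s_1,\dots,s_n)\cap U_M(t_1,\dots,t_m)=U_M(s_1,\dots,s_n,t_1,\dots,t_m)$, so they form a filter base of submodules and hence, exactly as for rings (see \cite[p.~144]{20}), define a linear topology on $M_R$ under which $(M_R,+)$ is a topological abelian group. It then remains only to establish continuity of $\mu$.

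For that, I would fix $(x_0,r_0)\in M_R\times R$ and a basic neighbourhood $U:=U_M(s_1,\dots,s_n)$ of $0$ in $M_R$, and look for basic neighbourhoods of $x_0$ and of $r_0$ whose product is contained in $x_0r_0+U$. Writing $x=x_0+u$ and $r=r_0+w$, one has
\[
xr-x_0r_0=x_0w+ur_0+uw .
\]
If $u$ is taken in $U$ itself, then $ur_0\in U$ and $uw\in UR\subseteq U$ because $U$ is a submodule, so the only summand left to control is $x_0w$; thus it suffices to produce a basic neighbourhood $W$ of $0$ in $R_{R\Rtop}$ with $x_0W\subseteq U$. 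The claim is that $W:=U_R(s_1,\dots,s_n)$ works, i.e. that $x_0\,U_R(s_1,\dots,s_n)\subseteq U_M(s_1,\dots,s_n)$.

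Finally I would prove this last inclusion. The key observation is that $U_R(s)\subseteq Rs$ for every $s\in S$, since $y=y\cdot1\in yR\subseteq Rs$ whenever $y\in U_R(s)$. Hence, if $y\in U_R(s_1,\dots,s_n)=\bigcap_i U_R(s_i)$, then $yR\subseteq\bigcap_i Rs_i$, and therefore
\[
(x_0y)R=x_0(yR)\subseteq x_0\Big(\bigcap_i Rs_i\Big)\subseteq\bigcap_i(x_0R)s_i\subseteq\bigcap_i Ms_i ,
\]
so $x_0y\in U_M(s_1,\dots,s_n)$, as wanted. Combining the four steps proves the lemma. Nothing here is deep — in fact the standing hypotheses on $S$ play no role — and the only point that needs a little care is the reduction in the third step, where one must notice that two of the three summands of $xr-x_0r_0$ are absorbed by $U$ for free, so that the whole continuity statement comes down to the one-line inclusion of the last step.
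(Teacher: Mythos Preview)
Your proof is correct and follows essentially the same route as the paper's. The paper simply invokes property \emph{TM\,3} from \cite[p.~144]{20}, which amounts exactly to the inclusion $xU_R(s)\subseteq U_M(s)$ you establish in your final step; you have just unpacked the surrounding standard reductions (linearity of the topology, the three-term decomposition of $xr-x_0r_0$) that the paper leaves to the reference.
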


\begin{proof} It suffices to check property {\em TM}\,3 in \cite[p.~144]{20}. That is, we must prove that $(U_M(s):x)\supseteq U_R(s)$ for every $s\in S$, $x\in M_R$. Equivalently, that $xU_R(s)\subseteq U_M(s)$. Now if $r\in U_R(s)$, then $rR\subseteq  Rs$, so that $xrR\subseteq xRs\subseteq Ms$, i.e., $rx\in U_M(s)$.\end{proof}

\begin{Lemma}\label{Matlis} If the ring $R$ is commutative, the linear topology on any right $R$-module $M$ defined by the submodules $U(s)$, $s\in S$, coincides with the $R$-topology defined by Matlis in \cite{dim}.\end{Lemma}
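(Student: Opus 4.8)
The plan is to reduce the given neighborhood base $\{U(s_1,\dots,s_n)\}$ to the single-element submodules $U(s)$, $s\in S$, to identify these with the submodules $sM$, and then to quote Matlis's definition.

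First I would verify that, when $R$ is commutative, $U(s)=sM$ for every $s\in S$ and every $R$-module $M$. If $x\in U(s)$, then $xR\subseteq Ms$, and since $x=x\cdot 1\in xR$ we get $x\in Ms=sM$. Conversely, if $x\in sM=Ms$, write $x=ms$ with $m\in M$; then for each $r\in R$ we have $xr=msr=mrs=(mr)s\in Ms$, using commutativity, so $xR\subseteq Ms$ and $x\in U(s)$. Hence $U(s)=sM$, and therefore $U(s_1,\dots,s_n)=\bigcap_{i=1}^n U(s_i)=s_1M\cap\dots\cap s_nM$.

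Next I would show that the families $\{\,U(s_1,\dots,s_n)\mid n\ge 1,\ s_i\in S\,\}$ and $\{\,sM\mid s\in S\,\}$ define the same linear topology on $M$, by checking that each member of either family contains a member of the other. Every $sM$ equals $U(s)$ and so lies in the first family. Conversely, since $S$ is multiplicatively closed, $s:=s_1s_2\cdots s_n\in S$, and, again by commutativity, $sM=s_i t_i M\subseteq s_iM$ for each $i$, where $t_i:=\prod_{j\ne i}s_j$; hence $sM\subseteq\bigcap_{i=1}^n s_iM=U(s_1,\dots,s_n)$, so every member of the first family contains a member of the second. Thus the two neighborhood bases of $0$ are equivalent, and they define the same topology.

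Finally I would invoke the fact that Matlis defines the $R$-topology on an $R$-module $M$ in \cite{dim} to be the linear topology having the submodules $sM$, $s\in S$, as a neighborhood base of $0$; with this, the two topologies coincide. The argument is essentially routine, and there is no real obstacle: the only points that require a moment's care are the two commutativity identities $Ms=sM$ and ``$xR\subseteq Ms$ if and only if $x\in Ms$'', together with the reduction of the finite intersections $U(s_1,\dots,s_n)$ to a single submodule, which rests on $S$ being closed under products. Once these are in hand, the comparison with Matlis's definition is immediate.
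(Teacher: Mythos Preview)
Your proof is correct and follows essentially the same approach as the paper: the key identity is $U(s)=Ms$ in the commutative case, which the paper records in a single line. Your additional verification that the finite intersections $U(s_1,\dots,s_n)$ are cofinal with the single $U(s)$'s is sound but not strictly needed, since the lemma's statement already phrases the topology in terms of the $U(s)$ alone.
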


\begin{proof} $U(s)=\{\,x\in M\mid xR\subseteq Ms\,\}=Ms$.\end{proof}

In the next proposition, we consider the behavior of continuity of right $R$-module morphisms when the modules involved are endowed with the $R$-topology. Recall that a submodule $M$ of a right $R$-module $N_R$ is an {\em RD-pure submodule} if $Mr=M\cap Nr$ for every $r\in R$ (equivalently, if the natural homomorphism $M\otimes R/Rr\to N\otimes R/Rr$ is injective for every $r\in R$, or if the natural homomorphism $\Hom(R/rR,N)\to\Hom(R/rR,M)$ is surjective for every $r\in R$.) See \cite[Proposition~2]{WarfPur}.

\begin{proposition}\label{easy} {\rm (a)} Every  right $R$-module morphism $f\colon M_R\to N_R$ between  two right $R$-modules $M_R$ and $N_R$  endowed with their $R$-topologies is continuous.

{\rm (b)} For every  right $R$-module $N_R$ and every $s\in S$, the $R$-submodule $U(s)$ of $N_R$ is the largest $R$-submodule of $N_R$ contained in $Ns$.

{\rm (c)} A submodule $M_R$ of a right $R$-module $N_R$  endowed with the $R$-topology is an open submodule of $N_R$ if and only if $M_R\supseteq U(s)$ for some $s\in S$.

{\rm (d)} A right $R$-module morphism $f\colon M_R\to N_R$ between  two right $R$-modules $M_R$ and $N_R$ with their $R$-topologies is an open map if and only if $f(M_R)\supseteq U(s)$ for some $s\in S$.

{\rm (e)} Every right $R$-module epimorphism $f\colon M_R\to N_R$ between  two right $R$-modules $M_R$ and $N_R$ is an open continuous map.

{\rm (f)} Every  right $R$-module isomorphism $f\colon M_R\to N_R$ is a homeomorphism when the  two right $R$-modules $M_R$ and $N_R$ are endowed with their $R$-topologies.

{\rm (g)} If $M_R$ is an RD-pure submodule of a right $R$-module $N_R$ and $M_R,N_R$ are endowed with their $R$-topologies, then the embedding $M_R\hookrightarrow N_R$ is a topological embedding.\end{proposition}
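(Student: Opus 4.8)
The plan is to establish the seven items in the order (a), (b), (f), (g), then (c), then (d) and (e): parts (f) and (g) use only (a), and (d) and (e) will reduce to (c) together with a cofinality property of the basic neighbourhoods.

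For (a) I would simply compute: if $x\in U(s_1,\dots,s_n)$ inside $M_R$, then $f(x)R=f(xR)\subseteq f(Ms_1\cap\dots\cap Ms_n)\subseteq f(M)s_1\cap\dots\cap f(M)s_n\subseteq Ns_1\cap\dots\cap Ns_n$, so $f$ sends $U(s_1,\dots,s_n)\subseteq M$ into $U(s_1,\dots,s_n)\subseteq N$ and is therefore continuous. For (b), $U(s)=\{x\mid xR\subseteq Ns\}$ is a submodule contained in $Ns$ (evaluate at $1$), and any submodule $L$ of $N_R$ with $L\subseteq Ns$ satisfies $L=\bigcup_{x\in L}xR\subseteq U(s)$. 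For (f), apply (a) to $f$ and to $f^{-1}$. For (g), assuming $M_R$ is RD-pure in $N_R$ and taking $r=s$ in the purity condition, one gets $Ms=M\cap Ns$ for every $s\in S$, hence $U_M(s)=\{x\in M\mid xR\subseteq Ms\}=\{x\in M\mid xR\subseteq Ns\}=M\cap U_N(s)$; intersecting finitely many of these identities identifies the $R$-topology on $M$ with the topology induced from $N$, and since the inclusion is injective and continuous (by (a)) it is a topological embedding.

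The heart of the matter is (c), and specifically the ``only if'' direction (the ``if'' part is clear: a submodule $M$ of $N_R$ containing the open neighbourhood $U(s)$ of $0$ equals $\bigcup_{m\in M}(m+U(s))$, a union of translates of an open set). It amounts to the cofinality statement: for any $s_1,\dots,s_n\in S$ there should be a single $t\in S$ with $U_M(t)\subseteq U_M(s_1,\dots,s_n)$ for every right $R$-module $M$. My approach would be to use {\em TM}\,3 (Lemma~\ref{12.7}), which gives $M\,U_R(s_i)\subseteq U_M(s_i)$ for each $i$, hence $U_M(s_1,\dots,s_n)=\bigcap_iU_M(s_i)\supseteq M\cdot I$ with $I:=\bigcap_iU_R(s_i)$ a two-sided ideal of $R$; if $I$ meets $S$, say $t\in I\cap S$, then $M\cdot I\supseteq Mt\supseteq U_M(t)$ by (b), which finishes the reduction. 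So the problem is pushed onto showing that the two-sided ideal $I$ --- equivalently each $U_R(s)=\ann_R({}_RR/Rs)$, since $\Cal G$ is a filter --- lies in the Gabriel topology $\Cal G$; controlling these annihilators by means of the localization hypotheses (1)--(4) on $S$ is the step I expect to be the real obstacle of the whole proposition, everything else being formal. (For a right chain domain, where ${}_RR$ is uniserial, the ideals $U_R(s_i)$ are comparable by inclusion, so this part of the argument simplifies considerably.)

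Finally, (d) and (e). For ``only if'' in (d): $M$ is open in $M$, so $\im f=f(M)$ is an open submodule of $N$, hence $\im f\supseteq U_N(s)$ for some $s$ by (c). For (e): $f$ is continuous by (a); for an open $W\subseteq M$ and $x\in W$, I would pick $s_1,\dots,s_n$ with $x+U_M(s_1,\dots,s_n)\subseteq W$ and $t\in I\cap S$ as in the cofinality step, and then, using surjectivity, $f\bigl(U_M(s_1,\dots,s_n)\bigr)\supseteq f(M\cdot I)=f(M)\,I=N\,I\supseteq U_N(t)$, so $f(W)$ contains the neighbourhood $f(x)+U_N(t)$ of each of its points and is therefore open. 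For ``if'' in (d): if $\im f\supseteq U_N(s)$ then $\im f$ is an open submodule of $N$, the inclusion $\im f\hookrightarrow N$ is an open map, and $f=(\im f\hookrightarrow N)\circ(M\twoheadrightarrow\im f)$ is a composite of open maps, the surjection being open by (e); hence $f$ is open.
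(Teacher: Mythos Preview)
The paper omits the proof entirely (``The proofs are easy and we omit them''), so there is no argument to compare against; I will simply assess correctness.

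Your treatments of (a), (b), (f), (g) are fine. The real issue is the step you yourself flag as ``the real obstacle'': showing that each $U_R(s)=\ann_R({}_RR/Rs)$ lies in the Gabriel topology $\Cal G$, i.e.\ that $U_R(s)\cap S\neq\emptyset$. This is \emph{false} in general. The paper's own Example~\ref{quasismall} exhibits a nearly simple chain domain $R$ (so $S=R\setminus\{0\}$) in which, for every non-unit $s$, the two-sided ideal $U_R(s)$ is contained in the proper principal left ideal $Rs\subsetneq J(R)$ and hence must be $0$; thus $U_R(s)\cap S=\emptyset$. Your route through $I=\bigcap_i U_R(s_i)$ therefore collapses, and with it your arguments for (c), (d), (e), all of which rely on picking $t\in I\cap S$.

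The cofinality you need for (c) comes instead from a \emph{common right multiple} in $S$, exactly as in the paper's Lemma~3.2 (and Lemma~\ref{fourparts}(d)): given $s_1,s_2\in S$, the right Ore condition applied to $s_2$ and $s_1$ gives $u:=s_1 r_1=s_2 r_2\in S$ (since $S$ is multiplicatively closed). Then for \emph{any} module $N_R$ one has $Nu=Ns_ir_i\subseteq Ns_i$, and since $U_N(u)$ is a submodule contained in $Nu\subseteq Ns_i$, part~(b) gives $U_N(u)\subseteq U_N(s_i)$. Iterating yields a single $u\in S$ with $U_N(u)\subseteq U_N(s_1,\dots,s_n)$, which is exactly the cofinality needed for (c). Note that this argument takes place entirely in $N$ and never passes through the ring-level ideals $U_R(s_i)$. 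Your arguments for (d) and (e) should be rebuilt on this basis; in particular the step ``$f(M\!\cdot\! I)=N\!\cdot\! I\supseteq U_N(t)$'' in your proof of (e) is no longer available and must be replaced.
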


The proofs are easy and we omit them.

\section{The right $R$-module $\Hom(K_R, M\otimes_RK)$}\label{3}

In this section, the hypotheses on $R$ and $S$ are the same as in the previous section. 
For any right $R$-module $M_R$, we will be interested in the right $R$-module $$\Hom(K_R, M\otimes_RK).$$ Here the right $R$-module structure is given by the multiplication defined, for every $f\in \Hom(K_R, M\otimes_RK)$ and $r\in R$, by $(fr)(k)=f(rk)$ for all $k\in K$.

For any right $R$-module $M_R$, the right $R$-module $$\Hom(K_R, M\otimes_RK)$$ can be endowed with the $R$-topology, defined by the submodules $U(s_1,\dots,s_n):=U(s_1)\cap\dots\cap U(s_n)$ as a
neighborhood base of $0$. But we have that:

\begin{Lemma} For the modules $\Hom(K_R, M\otimes_RK)$, one has that $U(s)=V(s)$, where, for every element $s\in S$, $$V(s):=\{\,f\in \Hom(K_R, M\otimes_RK)\mid f((Rs^{-1})/R)=0\,\}.$$\end{Lemma}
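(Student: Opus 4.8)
The plan is to compute the submodule $Ns$ of $N:=\Hom(K_R,M\otimes_RK)$ explicitly, then read $U(s)$ off from it, and finally match the outcome with $V(s)$; the one delicate point is a left/right asymmetry that gets absorbed by the $R$-linearity of the homomorphisms in $N$. First I would identify $Ns$. By the definition of the $R$-module structure on $N$, right multiplication by $s\in S$ sends $f\in N$ to $f\circ\lambda_s$, where $\lambda_s\colon K\to K$ is left multiplication by $s$; this $\lambda_s$ is a right $R$-module endomorphism of $K=Q/R$ by associativity of the bimodule action, it is surjective because $s$ is invertible in $Q$ and hence $sQ=Q$, and its kernel is $\{\,q+R\mid sq\in R\,\}=(s^{-1}R)/R$ (the right $R$-submodule $s^{-1}R$ of $Q$ contains $R$, since $a=s^{-1}(sa)$ for $a\in R$). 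Since $\lambda_s$ is onto, a homomorphism $g\in N$ has the form $g=f\circ\lambda_s$ for some $f\in N$ exactly when $g$ vanishes on $\ker\lambda_s$ (the induced $f$ is then automatically $R$-linear), so that $Ns=\{\,g\in N\mid g((s^{-1}R)/R)=0\,\}$.

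With this in hand, $f\in U(s)$ means $fr\in Ns$ for every $r\in R$, which by the previous paragraph says $(fr)((s^{-1}R)/R)=0$ for all $r$, i.e.\ $f(rk)=0$ for all $r\in R$ and all $k\in(s^{-1}R)/R$; writing $k=s^{-1}r'+R$ with $r'\in R$, this is $f(rs^{-1}r'+R)=0$ for all $r,r'\in R$. The defining condition of $V(s)$, namely $f(rs^{-1}+R)=0$ for all $r\in R$, is the case $r'=1$ of this, so $U(s)\subseteq V(s)$. For the reverse inclusion, if $f\in V(s)$ then for all $r,r'\in R$ one has $f(rs^{-1}r'+R)=f((rs^{-1}+R)r')=f(rs^{-1}+R)\,r'=0$, using that $f$ is right $R$-linear and that $(rs^{-1}+R)r'=rs^{-1}r'+R$ in $K$; hence $V(s)\subseteq U(s)$ and $U(s)=V(s)$.

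The substantive step is the first one: noticing that multiplying by $s$ inside $N$ is \emph{pre}composition with $\lambda_s$ rather than postcomposition, and that $\lambda_s$ is a surjection of right $R$-modules with kernel exactly $(s^{-1}R)/R$. Once $Ns$ is pinned down, the apparent discrepancy between $(s^{-1}R)/R$, which governs $U(s)$, and $(Rs^{-1})/R$, which defines $V(s)$, evaporates at once via $R$-linearity. One could alternatively appeal to Proposition~\ref{easy}(b) (that $U(s)$ is the largest submodule of $N$ contained in $Ns$), but verifying $V(s)\subseteq Ns$ directly is awkward precisely because of the same asymmetry, so the computation above seems the cleaner route. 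Finally, note that nothing about $M$ is used: the identical argument yields $U(s)=V(s)$ for $\Hom(K_R,X)$ with any right $R$-module $X$ in place of $M\otimes_RK$.
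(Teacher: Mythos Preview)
Your proof is correct and rests on the same computation as the paper's: both hinge on the fact that the action $f\mapsto fs$ is precomposition with the surjective right $R$-module map $\lambda_s\colon K\to K$, whose kernel is $(s^{-1}R)/R$. The only difference is organizational---you first characterize $Ns$ as $\{g\mid g((s^{-1}R)/R)=0\}$ and then read off both inclusions, whereas the paper treats each inclusion separately, explicitly constructing the witness $g(q+R)=f(rs^{-1}q+R)$ for $fr=gs$ in the $(\supseteq)$ direction; your packaging is arguably cleaner, and your remark that $M\otimes_RK$ can be replaced by any right $R$-module is a valid generalization.
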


\begin{proof} $(\subseteq)$. Let $f$ be an element of $U(s)$, so that $f\in\Hom(K_R, M\otimes_RK)$ and $fR\subseteq \Hom(K_R, M\otimes_RK)s$. In order to show that $f\in V(s)$ we have to prove that $f((Rs^{-1})/R)=0$. Fix $r\in R$. Then $fr=gs$ for some $g\in \Hom(K_R, M\otimes_RK)$. Hence $f(rs^{-1}+R)=(fr)(s^{-1}+R)=(gs)(s^{-1}+R)=g(ss^{-1}+R)=0$. Thus $f((Rs^{-1})/R)=0$. 

$(\supseteq)$. Suppose $f\in V(s)$, so that $f((Rs^{-1})/R)=0$. In order to prove that $f\in U(s)$, we must show that, for every fixed element $r\in R$, there exists $g\in \Hom(K_R, M\otimes_RK)$ with $fr=gs$. Define $g\colon K_R\to M\otimes_RK_R$ by $g(q+R)=f(rs^{-1}q+R)$ for all $q\in Q$. Then $g$ is a well defined right $R$-module morphism, because if $q\in R$, then $f(rs^{-1}q+R)=f(rs^{-1}+R)q\in f((Rs^{-1})/R)R=0$, and $fr=gs$.\end{proof}

\bigskip

We will denote by $V(s_1,\dots,s_n)$ the intersection $V(s_1)\cap\dots\cap V(s_n)$, but it is necessary to remark that:

\begin{Lemma} For every $s,s'\in S$, there exists $t\in S$ such that $V(s)\cap V(s')\supseteq V(t)$.\end{Lemma}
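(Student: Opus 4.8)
The plan is to find, for given $s, s' \in S$, a single element $t \in S$ whose associated left ideal $Rt^{-1}$ contains both $Rs^{-1}$ and $R(s')^{-1}$ (modulo $R$), since the definition of $V(s)$ only involves vanishing on the submodule $(Rs^{-1})/R$ of $K_R$. Indeed, if $(Rs^{-1})/R \subseteq (Rt^{-1})/R$ and $(R(s')^{-1})/R \subseteq (Rt^{-1})/R$, then any $f$ vanishing on $(Rt^{-1})/R$ vanishes on both smaller submodules, i.e. $V(t) \subseteq V(s) \cap V(s')$, which is exactly what is wanted. So the whole statement reduces to a statement purely about the right Ore set $S$ and the left $R$-submodules of $K = Q/R$ generated by the elements $s^{-1}$, $(s')^{-1}$.

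First I would observe that $Rs^{-1} \subseteq Rt^{-1}$ in $Q$ is equivalent to $s^{-1} \in Rt^{-1}$, i.e. to $s^{-1} = r t^{-1}$ for some $r \in R$, i.e. to $t \in sR$ — wait, more carefully: $s^{-1}t^{-1\,-1}\cdots$; the cleanest route is to use the right Ore condition directly. Since $S$ is a right Ore set consisting of regular elements, for the pair $s, s' \in S$ there exist $a \in R$ and $u \in S$ with $su = s' a$; more relevantly, I want a common ``left multiple'' in a suitable sense. The right Ore condition on $S$ gives: for $s, s' \in S$ there are $r \in R$, $s'' \in S$ with $s r = s' s''$ — hmm, that is the form $sR \cap s'S \ne \emptyset$. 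Set $t := s r = s' s'' \in s'S \subseteq S$ using property (1) (if $s' s'' \in$ ... actually $t = s'' \cdot(\text{something})$? Let me instead directly demand $t \in S$ with $sR \supseteq tR$ and $s'R \supseteq tR$, equivalently $t \in sR \cap s'R$; such a $t$ exists in $S$ because $sR, s'R \in \Cal G$ (they contain $s, s' \in S$), $\Cal G$ is a Gabriel topology closed under finite intersections, so $sR \cap s'R \in \Cal G$, hence meets $S$: pick $t \in sR \cap s'R \cap S$.) Then $t = sr = s'r'$ for some $r, r' \in R$, so in $Q$ we get $t^{-1} = r^{-1}s^{-1}$... no: from $t = sr$ we get $rt^{-1}$ need not make sense. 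The correct manipulation: $t = sr \Rightarrow s^{-1} = r t^{-1}$? That needs $s^{-1} s r t^{-1} = s^{-1} t t^{-1} = s^{-1}$, and $s^{-1}(sr)t^{-1} = r t^{-1}$, so yes $s^{-1} = r t^{-1} \in R t^{-1}$, hence $R s^{-1} \subseteq R t^{-1}$, and likewise $R (s')^{-1} \subseteq R t^{-1}$.

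From $R s^{-1} \subseteq R t^{-1}$ it follows that $(R s^{-1})/R \subseteq (R t^{-1})/R$ as submodules of $K_R$, and similarly for $s'$. Therefore any $f \in \Hom(K_R, M \otimes_R K)$ with $f((R t^{-1})/R) = 0$ satisfies $f((R s^{-1})/R) = 0$ and $f((R (s')^{-1})/R) = 0$, i.e. $V(t) \subseteq V(s) \cap V(s')$, establishing $V(s) \cap V(s') \supseteq V(t)$ with $t \in S$ as required. The only subtlety — and the ``main obstacle'' insofar as there is one — is justifying that $sR \cap s'R$ meets $S$; this is where the Ore hypothesis and property (1) on $S$ genuinely enter, via the fact (recorded earlier in Section~\ref{completions}) that $\Cal G$ is a Gabriel topology whose members are exactly the right ideals meeting $S$, and Gabriel topologies are closed under finite intersection. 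Everything else is a direct computation in $Q$.
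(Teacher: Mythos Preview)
Your proof is correct and follows essentially the same approach as the paper: find $t\in sR\cap s'R\cap S$, deduce $s^{-1}=rt^{-1}\in Rt^{-1}$ (and similarly for $s'$), hence $(Rs^{-1})/R\subseteq (Rt^{-1})/R$, and conclude $V(t)\subseteq V(s)\cap V(s')$. The only cosmetic difference is that the paper obtains $t$ by citing the common-denominator lemma \cite[Lemma~4.21]{goodwar} directly, whereas you invoke the closure of the Gabriel topology $\Cal G$ under finite intersections; these amount to the same thing.
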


\begin{proof} 
Given $s,s'\in S$, there exist $t\in S$ and $r,r'\in R$ with $t=sr=s'r'$ \cite[Lemma~4.21]{goodwar}. Then  $s^{-1}=rt^{-1}$, so that $Rs^{-1}=Rrt^{-1}\subseteq Rt^{-1}$. Therefore $V(t)\subseteq V(s)$, because if $f\in \Hom(K_R, M\otimes_RK)$ and $f(Rt^{-1}/R)=0$, then $f(Rs^{-1}/R)=0$, that is, $f\in V(s)$.  Similarly, $V(t)\subseteq V(s')$.\end{proof}

A right (or left) $R$-module $M_R$ is {\em $h$-divisible} if every homomorphism $R_R\to M_R$ extends to an $R$-module morphism $Q_R\to M_R$ \cite[Section 2]{submitted}. Any right (or left) $R$-module $M$ contains a unique largest $h$-divisible submodule $h(M)$ that contains every $h$-divisible submodule of $M$. An $R$-module $M_R$ is {\em $h$-reduced} if $h(M_R)=0$, or, equivalently,  if $\Hom(Q_R,{}M_R)=0$ \cite{submitted}. 
Obviously, $h$-divisible right $R$-modules are divisible.

\begin{proposition} \label{equal} Divisible torsion-free right $R$-modules are 
$Q$-modules. 
 In particular, $h(M_R)=d(M_R)$ for any torsion-free right $R$-module~$M_R$.\end{proposition}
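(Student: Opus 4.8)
The plan is to show that a divisible torsion-free right $R$-module $M_R$ carries a natural $Q$-module structure extending its $R$-module structure, and then deduce the statement about $h(M_R)=d(M_R)$.

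First I would establish that for a divisible torsion-free module $M_R$, every right multiplication $\rho_s\colon M_R\to M_R$ by an element $s\in S$ is an abelian group automorphism. Indeed, by hypothesis $M_R$ is torsion-free, so $\rho_s$ is a monomorphism for every $s\in S$ (by the characterization of $\Cal F$ recalled in Section~\ref{completions}), and $M_R$ divisible means $Ms=M$, so $\rho_s$ is an epimorphism. Hence $\rho_s$ is invertible as a map of abelian groups, and I may define, for $x\in M$ and $s\in S$, the element $x s^{-1}:=\rho_s^{-1}(x)$.

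Next I would check that this genuinely yields a right $Q$-module structure, using that $Q=R[S^{-1}]$ and that $S$ is a right Ore set. Every element of $Q$ can be written as $r s^{-1}$ with $r\in R$, $s\in S$; I define $x\cdot(r s^{-1}):=\rho_s^{-1}(xr)$. Well-definedness and the module axioms follow from the Ore conditions together with the fact that the maps $\{\rho_s\mid s\in S\}$ commute with each other and with the $\rho_r$, $r\in R$, and that $\rho_s^{-1}$ is two-sided inverse to $\rho_s$; I would spell out that if $r s^{-1}=r_1 s_1^{-1}$ in $Q$ then $\rho_s^{-1}\rho_r=\rho_{s_1}^{-1}\rho_{r_1}$ as maps $M\to M$, which amounts to a common-denominator computation. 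The resulting $Q$-action restricts to the original $R$-action, so $M_R$ is in fact a $Q$-module, and any $R$-linear map between such modules is automatically $Q$-linear (again because $s^{-1}$ acts as the forced inverse of $s$).

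For the last sentence, let $M_R$ be torsion-free. Since $h(M_R)$ is always divisible and is a submodule of $M_R$, it is divisible and torsion-free, hence a $Q$-module by what precedes; but a $Q$-module, viewed as an $R$-module, is $h$-divisible, because any $R$-morphism $R_R\to M$ extends to $Q_R\to M$ simply by $Q$-linearity of the scalar extension (the identity $Q\to Q$ composed appropriately). More directly: $d(M_R)$ is divisible and, being a submodule of the torsion-free $M_R$, is torsion-free, hence a $Q$-module, hence $h$-divisible, so $d(M_R)\subseteq h(M_R)$; the reverse inclusion $h(M_R)\subseteq d(M_R)$ is the observation already recorded in the excerpt that $h$-divisible modules are divisible. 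Therefore $h(M_R)=d(M_R)$.

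I expect the main obstacle to be the verification that the pointwise-defined inverse maps $\rho_s^{-1}$ assemble into a well-defined and associative right $Q$-action; this is where the right Ore condition on $S$ and property~(1) of $S$ (that $ab\in S$ implies $a\in S$) must be used carefully to handle the choice of common denominators and to see that different representations $r s^{-1}$ of the same element of $Q$ act identically. Once the $Q$-module structure is in place, the remaining deductions are formal.
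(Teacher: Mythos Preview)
Your overall strategy coincides with the paper's: show that each $\rho_s$ is an abelian group automorphism of $M$, upgrade $M$ to a $Q$-module, and then read off $h(M_R)=d(M_R)$. The one substantive slip is your claim that the maps $\rho_s$ and $\rho_r$ commute; in the noncommutative setting $\rho_a\circ\rho_b=\rho_{ba}$, so $r\mapsto\rho_r$ is a ring \emph{anti}homomorphism $R\to\End_\Z(M)$ and the $\rho$'s generally do not commute. Your hand verification of well-definedness therefore cannot rest on commutativity; it must be carried out using the Ore relation alone (and property~(1) of $S$ plays no role here). The paper sidesteps this entirely by invoking the universal property of $Q=R[S^{-1}]$: since each $\rho_s$ is invertible in $\End_\Z(M)$, the antihomomorphism $R\to\End_\Z(M)$ extends uniquely to $Q\to\End_\Z(M)$, which is exactly the desired right $Q$-module structure. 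Once that correction is made, your deduction of $h(M_R)=d(M_R)$ is the same as the paper's.
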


\begin{proof} Suppose $M_R$ torsion-free and divisible. Then right multiplication by $s$ is an automorphism of the abelian group $M$ for every $s\in S$. By the universal property of $Q=R[S^{-1}]$, the canonical ring antihomomorphism $R\to\End_\Z(M)$ extends to a ring antihomomorphism $Q\to\End_\Z(M)$ in a unique way. That is, there is a unique right $Q$-module structure on $M$ that extends the right $R$-module structure of $M_R$. Thus $M$ is a right $Q$-module. In particular, it is an $h$-divisible right $R$-module.
\end{proof} 

\bigskip

Let $M_R$ be a right $R$-module. For every element $x\in M_R$, there is a right $R$-module morphism $R_R\to M_R$, $1\mapsto x$. Tensoring with $_RK$, we get a right $R$-module morphism $\lambda_x\colon K_R\to M\otimes_RK$, defined by $\lambda_x(k)=x\otimes k$.  The canonical mapping $\lambda\colon M_R\to\Hom(K_R, M\otimes_RK)$, defined by $\lambda(x)=\lambda_x$ for every $x\in M_R$, is a right $R$-module morphism, as is easily checked. In the rest of this section, all $R$-modules are endowed with their $R$-topologies.

\begin{theorem}\label{righthreduced} Let $M_R$ be an $h$-reduced torsion-free right $R$-module. Then the canonical mapping $\lambda\colon M_R\to\Hom(K_R, M\otimes_RK)$ is an embedding of topological modules and $\Hom(K_R, M\otimes_RK)$ is complete.\end{theorem}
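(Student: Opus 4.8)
The plan is to establish three things: that $\lambda$ is injective, that $\lambda$ is a topological embedding (i.e.\ the $R$-topology on $M_R$ is the subspace topology inherited from $\Hom(K_R,M\otimes_RK)$ via $\lambda$), and that $\Hom(K_R,M\otimes_RK)$ is complete in its $R$-topology.

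\textit{Injectivity.} First I would identify the kernel of $\lambda$. An element $x\in M_R$ lies in $\ker\lambda$ iff $\lambda_x=0$, i.e.\ iff $x\otimes k=0$ in $M\otimes_RK$ for all $k\in K$. Taking $k=s^{-1}+R$ for $s\in S$, the condition $x\otimes(s^{-1}+R)=0$ should, after chasing through the short exact sequence $0\to R\to Q\to K\to 0$ tensored with $M$, say that $x\in Ms$ (here I use that $M$ is torsion-free, so $\Tor_1^R(M,K)=t(M)=0$ and $M\otimes_RR\to M\otimes_RQ$ is injective, giving $M\otimes_RK\cong (M\otimes_RQ)/M$). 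Thus $\ker\lambda=\bigcap_{s\in S}Ms=h(M_R)$ — the last equality because for a torsion-free module $h(M)=d(M)$ by Proposition~\ref{equal}, and $\bigcap_{s\in S}Ms$ is easily seen to be the largest divisible submodule. Since $M_R$ is $h$-reduced, $\ker\lambda=0$, so $\lambda$ is injective.

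\textit{Topological embedding.} By Proposition~\ref{easy}(a), $\lambda$ is continuous, so it suffices to show $\lambda$ is open onto its image, i.e.\ that $\lambda(U_M(s))\supseteq \lambda(M)\cap V(s)$ for each $s\in S$ (recall $U(s)=V(s)$ on the Hom-module by the Lemma above). Unwinding: if $x\in M$ and $\lambda_x\in V(s)$, then $\lambda_x((Rs^{-1})/R)=0$, i.e.\ $x\otimes(rs^{-1}+R)=0$ for all $r\in R$; taking $r=1$ gives $x\in Ms$ by the computation above, and more generally this should force $xR\subseteq Ms$, i.e.\ $x\in U_M(s)$. Hence $\lambda^{-1}(V(s))=U_M(s)$, which gives both continuity and openness onto the image simultaneously, so $\lambda$ is a topological embedding.

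\textit{Completeness.} The module $N:=\Hom(K_R,M\otimes_RK)$ carries the $R$-topology with neighborhood base $\{V(s_1,\dots,s_n)\}$. I would argue as follows: an element of the completion is represented by a Cauchy net, and modulo the cofinal system from the previous Lemma it is enough to handle a compatible family $(f_s+V(s))_{s\in S}$ with $f_s\equiv f_{s'}\pmod{V(t)}$ whenever $t\in tR\cap S$ refines both. Because $V(s)=\{f\mid f((Rs^{-1})/R)=0\}$, specifying $f$ modulo $V(s)$ is the same as specifying the restriction of $f$ to $(Rs^{-1})/R$, and as $s$ ranges over $S$ the submodules $(Rs^{-1})/R$ exhaust $K=\bigcup_{s\in S}(Rs^{-1})/R$ (every element of $Q$ has the form $rs^{-1}$). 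So a Cauchy net amounts to a coherent choice of $R$-linear maps on the directed union of the $(Rs^{-1})/R$, which glue to a genuine $R$-module morphism $f\colon K_R\to M\otimes_RK$, i.e.\ an element of $N$ realizing the limit. Separatedness (Hausdorffness), needed for uniqueness of the limit, follows since $\bigcap_{s\in S}V(s)=\{f\mid f(K)=0\}=0$. I expect the completeness argument to be the main obstacle: one must check carefully that the coherence conditions among the $f_s$ are exactly what is needed for well-definedness of the glued map on $K$, and that no topological subtlety (e.g.\ the index set being directed via the Lemma, not via divisibility of the $s$'s) is swept under the rug.
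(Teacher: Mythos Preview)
Your approach is the same as the paper's, and the arguments for the topological embedding and for completeness are correct. Your gluing argument for completeness --- observing that $N/V(s)\cong\Hom_R((Rs^{-1})/R,\,M\otimes_RK)$ and that $K=\bigcup_s(Rs^{-1})/R$ is a directed union, so that $\varprojlim N/V(s)\cong\Hom_R(K,M\otimes_RK)=N$ --- is in fact a cleaner rephrasing of the paper's explicit Cauchy-net computation.

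There is, however, a genuine gap in your injectivity argument. You assert that $\ker\lambda=\bigcap_{s\in S}Ms$ and that $\bigcap_{s\in S}Ms$ is the largest divisible submodule of $M$. The first equality is only half-established: taking $k=s^{-1}+R$ gives $\ker\lambda\subseteq\bigcap_s Ms$, but vanishing on all of $K$ requires $x\otimes(rs^{-1}+R)=0$ for \emph{every} $r\in R$, which (by the computation you invoke) means $xr\in Ms$, i.e.\ $xR\subseteq Ms$ --- a strictly stronger condition in the noncommutative setting. More seriously, the claim that $\bigcap_s Ms$ is the largest divisible submodule is commutative intuition that does not carry over: with only a right Ore condition on $S$, the subset $\bigcap_s Ms$ need not even be a right $R$-submodule of $M_R$, since right multiplication by $r\in R$ sends $Ms'$ into $Ms'r$, and there is no reason one can find $s'\in S$ with $s'r\in Rs$. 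The paper's route (cf.\ Proposition~\ref{torsionfreeright}), which you can adopt with a one-line change, is to show directly that $\ker\lambda$ is divisible: if $x\in\ker\lambda$ and $t\in S$, write $x=yt$ (torsion-freeness gives uniqueness); since $tK=K$, every $k\in K$ is $tk'$ for some $k'$, whence $y\otimes k=y\otimes tk'=yt\otimes k'=x\otimes k'=0$, so $y\in\ker\lambda$. Then $\ker\lambda\subseteq d(M)=h(M)=0$.
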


\begin{proof} The canonical mapping $\lambda\colon M_R\to\Hom(K_R, M\otimes_RK)$ is injective by \cite[Theorem~4.5]{submitted}. In order to show that $\lambda\colon M_R\to\Hom(K_R, M\otimes_RK)$ is an embedding of topological modules, it suffices to show that $\lambda^{-1}(V(s_1,\dots,s_n))=U(s_1,\dots,s_n)$ for every $s_1,\dots,s_n\in S$. Now $x\in \lambda^{-1}(V(s_1,\dots,s_n))$ if and only if $\lambda_x\in V(s_1,\dots,s_n)$, that is, if and only if $x\otimes (Rs_1^{-1}+\dots+Rs_1^{-1}/R)=0$ in $M\otimes_RK$. Equivalently, if and only if $x\otimes (rs_i^{-1}+R)=0$ in $M\otimes_RK$ for every $r\in R$ and $i=1,2,\dots,n$. By \cite[Step 3 of the proof of Theorem~4.5]{submitted}, this is equivalent to $xr\in Ms_i$  for every $r\in R$ and $i=1,2,\dots,n$, that is, if and only if $x\in U(s_1,\dots,s_n)$.

In order to prove that $\Hom(K_R, M\otimes_RK)$ is complete, we must show that every Cauchy net converges. Let $A$ be a directed set with order relation $\le$ and let $\{f_\alpha\}_{\alpha\in A}$ be a Cauchy net in $\Hom(K_R, M\otimes_RK)$. Define a morphism $f\in \Hom(K_R, M\otimes_RK)$ as follows. Since we are dealing with a Cauchy net, for every $s\in S$ there exists $\alpha\in A$ such that $f_\beta-f_\gamma\in V(s)$ for every $\beta,\gamma\in A$, $\beta,\gamma\ge\alpha$. Set $f(rs^{-1}+R)=f_\alpha(rs^{-1}+R)$ for every $r\in R$. We leave to the reader the easy verification that $f$ is a well defined mapping.
Let us check that $f(kr)=f(k)r$  for every $k\in K_R$ and $r\in R$. 
We have that $k=as^{-1}+R$ for some $a\in R$, $s\in S$. By the right Ore condition, there exist $r'\in R$ and $t\in S$ such that $as^{-1}r=r't^{-1}$. Since  $A$ is directed, there exists $\alpha$ such that 
$f ( r't^{-1} + R) = f_\alpha ( r't^{-1}+ R)$ and $f(  as^{-1}+R) r = f_\alpha ( as^{-1}+R) r $.
Therefore $f (kr) = f(k) r$.
It is now easily seen that $f$ is the limit of the Cauchy net.
\end{proof}

For any right $R$-module $M_R$ endowed  with its $R$-topology, the {\em (Hausdorff) completion} of $M_R$ is $\displaystyle \widetilde{M_R}:= \lim_{\longleftarrow} M/U(s_1,\dots,s_n)$. Notice that the set of all the submodules $U(s_1,\dots,s_n)$ of $M_R$ is downward directed under inclusion. Here $\{s_1,\dots,s_n\}$ ranges in the set of all finite subsets of $S$. There is a canonical mapping $\eta\colon M\to \widetilde{M_R}$, whose kernel is the closure $\overline{\{0\}}$ of $0$ in the $R$-topology of $M_R$. Clearly, $\overline{\{0\}}=\bigcap_{s_1,\dots,s_n\in S} U(s_1,\dots,s_n)=
\bigcap_{s\in S} U(s)=\{\,x\in M_R\mid xR\subseteq \bigcap_{s\in S}Ms\,\}$.

From Lemma \ref{12.7}, we get that if $M_R$ is a right $R$-module, the right $R$-module $\Hom(K_R, M\otimes_RK)$ with the topology defined by the submodules $V(s)$ is a topological module over the topological ring $R_{R\Rtop}$.

\begin{proposition} The right $R$-submodules $V(s)$ of the ring $\End(K_R)$ are two-sided ideals of $\End(K_R)$. The topology they define on $\End(K_R)$ is a ring topology. If $R$ is commutative, this topology on $\End(K_R)$ coincides with the topology on the completion $H$ of $R$ with respect to the $R$-topology \cite[p.~15]{dim}.\end{proposition}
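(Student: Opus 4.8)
The plan is to establish the three assertions separately; the first two are short once the right observation is made. For the statement that each $V(s)$ is a two-sided ideal of $\End(K_R)$, the key observation is that $Rs^{-1}/R$ is the kernel of the additive endomorphism $\rho_s$ of $K$ given by right multiplication by $s$: an element $q+R$ of $K$ is killed by $s$ exactly when $qs\in R$, i.e.\ when $q\in Rs^{-1}$. Since every $g\in\End(K_R)$ is right $R$-linear and $s\in R$, we have $g\rho_s=\rho_s g$, whence $g(Rs^{-1}/R)=g(\ker\rho_s)\subseteq\ker\rho_s=Rs^{-1}/R$. Therefore, for $f\in V(s)$ and arbitrary $g\in\End(K_R)$, both $(gf)(Rs^{-1}/R)=g\bigl(f(Rs^{-1}/R)\bigr)=0$ and $(fg)(Rs^{-1}/R)=f\bigl(g(Rs^{-1}/R)\bigr)\subseteq f(Rs^{-1}/R)=0$, so $gf$ and $fg$ both lie in $V(s)$. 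As $V(s)$ is plainly an additive subgroup of $\End(K_R)$, it is a two-sided ideal.

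That the associated topology is a ring topology is then formal: by the Lemma guaranteeing that $V(s)\cap V(s')$ contains some $V(t)$, the family $\{V(s)\mid s\in S\}$ is downward directed, so it is a base of a filter of two-sided ideals of $\End(K_R)$; as recalled in Section~\ref{completions}, every such filter is a fundamental system of neighborhoods of $0$ for a (left and right) linear ring topology \cite[p.~144]{20}.

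For the commutative case I would exhibit a topological ring isomorphism between $\End(K_R)$, with the $V(s)$-topology, and the completion $H$ of $R$. One may assume $K\ne0$, the case $K=0$ being trivial. Let $\lambda\colon R\to\End(K_R)=\Hom(K_R,R\otimes_RK)$, $x\mapsto(k\mapsto xk)$, be the canonical ring homomorphism. A direct computation gives $\lambda^{-1}(V(s))=\{x\in R\mid xRs^{-1}\subseteq R\}=Rs=U_R(s)$, so $\lambda$ induces, compatibly in $s\in S$, injective ring homomorphisms $R/sR\hookrightarrow\End(K_R)/V(s)$. These are isomorphisms: given $h\in\End(K_R)$, the element $h(s^{-1}+R)$ lies in $\ker\rho_s=Rs^{-1}/R$, say $h(s^{-1}+R)=ys^{-1}+R$ with $y\in R$; since $Rs^{-1}/R=(s^{-1}+R)R$ is cyclic and $h,\lambda_y$ are $R$-linear, $h\equiv\lambda_y\pmod{V(s)}$. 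Passing to inverse limits, $\lambda$ yields a ring isomorphism $H=\varprojlim_s R/sR\xrightarrow{\sim}\varprojlim_s\End(K_R)/V(s)$. Moreover, the canonical map $\End(K_R)\to\varprojlim_s\End(K_R)/V(s)$ is an isomorphism: it is injective because $\bigcap_{s\in S}V(s)=0$ (every element of $K$ lies in some $Rs^{-1}/R$), and surjective because a compatible system $(\bar x_s)_s$ defines $h\in\End(K_R)$ by $h(rs^{-1}+R):=x_s rs^{-1}+R$, with well-definedness and $R$-linearity a routine check using the compatibility relations and that $R$ is a ring (alternatively, when $R_R$ is $h$-reduced one may quote completeness of $\End(K_R)$ from Theorem~\ref{righthreduced}). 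Composing, $V(s)$ is carried onto $\ker(H\to R/sR)$; hence the $V(s)$-topology on $\End(K_R)$ is identified with the topology of $H$ as a completion, which is precisely the one Matlis describes in \cite[p.~15]{dim}.

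The main obstacle lies in the third part: one has to recognize $\End(K_R)$ as Matlis's completion $H$ — the mechanism being the natural isomorphisms $\End(K_R)/V(s)\cong R/sR$ induced by $\lambda$ — and then verify that the $V(s)$ are exactly the kernels of the projections onto these quotients. By contrast, once one observes the identity $Rs^{-1}/R=\ker\rho_s$, the first two assertions, and the matching of fundamental neighborhoods in the third, are essentially immediate.
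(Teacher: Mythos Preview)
Your proof is correct. The first two parts match the paper's argument almost exactly: the paper also observes that $Rs^{-1}/R$ is the set of elements of $K$ annihilated by right multiplication by $s$, hence fully invariant, and then invokes the same fact from \cite[p.~144]{20} about filters of two-sided ideals.

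For the commutative case your route diverges from the paper's. The paper does not reconstruct the isomorphism $\End(K_R)\cong H$; instead it invokes the earlier lemma $U(s)=V(s)$ to say that the $V(s)$-topology on $\End(K_R)$ is precisely its $R$-topology, then cites Lemma~\ref{Matlis} to identify this with Matlis's $R$-topology, and finally uses Matlis's own identification of $\End(K_R)$ with $H$ together with the general fact that the completion topology on $H$ coincides with the $R$-topology on $H$. Your approach is more explicit and self-contained: you build the topological ring isomorphism $H=\varprojlim_s R/sR\cong\varprojlim_s\End(K_R)/V(s)\cong\End(K_R)$ directly via the level-wise isomorphisms $R/sR\cong\End(K_R)/V(s)$ induced by $\lambda$. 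This buys independence from Matlis's text and from the $U(s)=V(s)$ lemma, at the cost of the extra inverse-limit verification; the paper's argument is shorter but leans on those prior results.
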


\begin{proof} When we consider $M = R_R$, then, by 
  \cite[Step 2 of the proof of Theorem~4.5]{submitted},  the elements of $K$ annihilated by right multiplications of an element $s\in S$ are those of $Rs^{-1}/R$. It follows that $Rs^{-1}/R$ is a fully invariant submodule of $K_R$. From this we get that every $V(s)$ is a  two-sided ideal of the ring $\End(K_R)$. 

Every filter of two-sided ideals of a ring is a fundamental system of neighborhoods of $0$ for a right and left linear topology on the ring \cite[p.~144]{20}. Thus the topology defined by the two-sided ideals $V(s)$ is a ring topology on $\End(K_R)$. Moreover, if $R$ is commutative, the submodules $V(s)$ define the $R$-topology on the right $R$-module $\Hom(K_R, M\otimes_RK)$ for every module $M$ (Lemma~\ref{12.7}), which coincides with the $R$-topology defined by Matlis in \cite{dim} by  Lemma~\ref{Matlis}. 
Finally, Matlis' $R$-topology on $\End(K_R)$ coincides with the topology on the completion $H$ of $R$ with respect to the $R$-topology, because the topology on the completion $H$ coincides with the $R$-topology on $H$.
\end{proof}

\section{Torsion-free modules}\label{4}

In this section, we keep the same hypotheses and notations as in the previous two sections. 

As we have seen, for any right $R$-module $M_R$, there is a right $R$-module morphism $\lambda\colon M_R\to\Hom(K_R, M\otimes_RK)$, defined by $\lambda(x)=\lambda_x$ for every $x\in M_R$, where $\lambda_x\colon k\to x\otimes k$, and there is a canonical mapping $\eta\colon M\to \widetilde{M_R}$ of $M_R$ with its $R$-topology into its Hausdorff completion.

\begin{proposition} \label {torsionfreeright} Let $M_R$ be a torsion-free right $R$-module. Then: {\rm (a)} $\ker\lambda$ is the closure of $0$ in the $R$-topology; {\rm (b)} $\ker\lambda$ is the kernel of the canonical mapping $\eta\colon M\to \widetilde{M_R}$; and {\rm (c)} $\ker\lambda$ is equal to $h(M_R)$.\end{proposition}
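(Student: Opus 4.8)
The three asserted identities concern the three subobjects $\ker\lambda$, $\ker\eta$, and $h(M_R)$ of a torsion-free module $M_R$. The strategy is to identify all three with the single submodule
$$
N:=\bigcap_{s\in S}U(s)=\{\,x\in M_R\mid xR\subseteq\textstyle\bigcap_{s\in S}Ms\,\},
$$
the closure $\overline{\{0\}}$ of $0$ in the $R$-topology. First I would dispose of (a) and (b) together, and then treat (c), which is where the torsion-freeness hypothesis does the real work.

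\textbf{Step 1: (a) and (b).} By the computation already recorded after the definition of $\widetilde{M_R}$, the kernel of $\eta\colon M\to\widetilde{M_R}$ is exactly $\overline{\{0\}}=\bigcap_{s\in S}U(s)=N$, so (b) reduces to showing $\ker\lambda=N$. For (a) I would use the description of $\lambda$ from Theorem~\ref{righthreduced}: for $x\in M_R$, $\lambda(x)=\lambda_x=0$ means $x\otimes k=0$ in $M\otimes_RK$ for every $k\in K$, equivalently $x\otimes(rs^{-1}+R)=0$ for all $r\in R$, $s\in S$. By the cited ``Step 3 of the proof of Theorem~4.5'' of \cite{submitted}, $x\otimes(rs^{-1}+R)=0$ in $M\otimes_RK$ is equivalent to $xr\in Ms$; running over all $r\in R$ this says precisely $xR\subseteq Ms$, i.e.\ $x\in U(s)$. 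Intersecting over all $s\in S$ gives $\ker\lambda=\bigcap_{s\in S}U(s)=N$, proving (a) and hence (b). This is essentially the same calculation as the one establishing $\lambda^{-1}(V(s))=U(s)$ in Theorem~\ref{righthreduced}, just taken at $s_1,\dots,s_n$ ranging over all of $S$, so no new obstacle arises here.

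\textbf{Step 2: (c), the inclusion $h(M_R)\subseteq N$.} Let $H:=h(M_R)$ be the largest $h$-divisible submodule. Since $h$-divisible modules are divisible, $H$ is divisible, so $Hs=H$ for every $s\in S$; hence $H\subseteq Ms$ for every $s$, and therefore $HR=H\subseteq\bigcap_s Ms$, i.e.\ $H\subseteq N$. (One can also argue via $\lambda$: an $h$-divisible module $D$ has $D\otimes_RK=0$ because $K$ is a quotient of $Q$ and $\Hom(Q_R,-)$ detects... — but the divisibility argument is cleaner, so I would use that.)

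\textbf{Step 3: (c), the inclusion $N\subseteq h(M_R)$ --- the main point.} Here is where I expect the real work to lie, and where torsion-freeness is essential. Set $N=\overline{\{0\}}=\ker\lambda$. I would show $N$ is itself $h$-divisible, which forces $N\subseteq h(M_R)$. First, $N$ is divisible: given $x\in N$ and $s\in S$, we have $x\in U(t)\subseteq Mt$ for every $t\in S$; in particular, using that $S$ is a right Ore set one writes, for any $t'\in S$, a common ``right multiple'' relation and checks that $x=yt'$ for some $y$, and $y$ again lies in every $U(t)$ --- i.e.\ $y\in N$ --- using torsion-freeness to pin down $y$ uniquely and to transport the membership $xR\subseteq\bigcap Mt$ back to $y$. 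Thus $Ns=N$ for all $s\in S$, and by Proposition~\ref{equal} (divisible torsion-free modules are $Q$-modules, and such are $h$-divisible) the torsion-free divisible module $N$ is $h$-divisible. Hence $N\subseteq h(M_R)$. Combined with Step 2 and Step 1 this yields $h(M_R)=N=\ker\lambda=\ker\eta$, which is (a), (b), (c).

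\textbf{Where the difficulty sits.} Steps 1 and 2 are bookkeeping with the already-cited tensor computation and with divisibility. The delicate step is Step 3: one must verify that the closure of $0$ is divisible (an Ore-condition argument, where torsion-freeness guarantees the ``division'' is unambiguous and preserves the defining containment $xR\subseteq\bigcap_s Ms$), and then invoke Proposition~\ref{equal} to upgrade ``torsion-free divisible'' to ``$h$-divisible.'' An alternative, perhaps slicker, route for Step 3 is to note $M_R/N\hookrightarrow\widetilde{M_R}$ is $h$-reduced and torsion-free (quotient of a torsion-free module by the RD-pure submodule $N$ is torsion-free), apply Theorem~\ref{righthreduced} to see $M/N$ embeds in $\Hom(K_R,(M/N)\otimes_RK)$, and compare with the isomorphism $M\otimes_RK\cong(M/N)\otimes_RK$ --- but checking that $N$ is exactly the $h$-divisible part this way still comes down to the same divisibility verification, so I would present the direct argument above.
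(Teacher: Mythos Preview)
Your proposal is correct and follows essentially the same architecture as the paper's proof: identify $\ker\lambda$, $\ker\eta$, and $\overline{\{0\}}$ via the tensor criterion, then show this common submodule is divisible and invoke Proposition~\ref{equal}. Two minor differences are worth noting. For the inclusion $h(M_R)\subseteq\ker\lambda$, the paper simply quotes that $\Hom(K_R,N_R)$ is $h$-reduced for every $N_R$ \cite[Theorem~2.8]{submitted}, whereas you argue via divisibility of $h(M_R)$; both are immediate. For the divisibility of $\ker\lambda$, the paper's argument is cleaner than your Ore-condition sketch: given $x\in\ker\lambda$ and $t\in S$, write $x=yt$ (possible since $x\in Mt$), and observe directly that $y\otimes K=y\otimes tK=yt\otimes K=x\otimes K=0$ because $tK=K$; this avoids the ``transport'' computation you allude to. Your Ore argument can be made to work (using $rs_1=tr_1$, then $yrs_1=xr_1\in M(s's_1)$, then torsion-freeness to cancel $s_1$), but the tensor identity $tK=K$ short-circuits it.
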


\begin{proof} We have already remarked that the kernel of $\eta$ is the closure of $\overline{\{0\}}$ of $0$. Hence (a)${}\Leftrightarrow{}$(b).

The right $R$-module $\Hom(K_R,N_R)$ is $h$-reduced for every right $R$-module $N_R$ \cite[Theorem~2.8]{submitted}. Let $M_R$ be a torsion-free right $R$-module. Since $$\lambda\colon M_R\to \Hom(K_R, M\otimes_RK)$$ is a homomorphism into an $h$-reduced $R$-module, it follows that $h(M)\subseteq \ker\lambda$.

Let us prove that $\ker\lambda\subseteq \overline{\{0\}}$. Suppose $x\in \ker\lambda$. Then $x\otimes (rs^{-1}+R)$ is equal to zero in the tensor product $M\otimes K$. By \cite[Theorem~3.1(1)]{submitted}, there exists an element $y_{r,s}\in M_R$ such that $x\otimes rs^{-1}=y_{r,s}\otimes 1$ in $M\otimes_RQ$. Thus $xr\otimes 1=y_{r,s}s\otimes 1$ in $M\otimes_RQ$. Since $M_R$ is torsion-free, it follows that $xr=y_{r,s}s$ in $M_R$ by \cite[Theorem~3.1(1)]{submitted} again. This proves that $xR\subseteq \bigcap_{s\in S}Ms$, and so $\ker\lambda\subseteq \overline{\{0\}}$. 

Conversely, $\overline{\{0\}}\subseteq\ker\lambda$, because if $x\in \overline{\{0\}}$, then $xR\subseteq Ms$ for every $s\in S$, that is, for every $s\in S$ and every $r\in R$ there exists $m_{r,s}\in M$ with $xr=m_{r,s}s$. Then, for every element $rs^{-1}+R\in K$, we have that $x\otimes(rs^{-1}+R)=xr\otimes (s^{-1}+R)=m_{r,s}s\otimes (s^{-1}+R)=m_{r,s}\otimes s(s^{-1}+R)=0$ in $M\otimes_RK$. Thus $x\in\ker\lambda$. This proves that $\overline{\{0\}}=\ker\lambda$. Therefore (a) and (b) hold.

We now show that $\ker\lambda$ is divisible. For every $s\in S$, $s$ is invertible in $Q$, hence $sQ=Q$, so $sK=K$. Now if $x\in \ker\lambda$ and $t\in S$, then $x\in \overline{\{0\}}$, hence $x=yt$ for some $y\in M_R$. We must prove that $y\in \ker\lambda$, that is, that $y\otimes K=0$ in $M\otimes K$. But $y\otimes K=y\otimes sK=ys\otimes K=x\otimes K=0$ in $M\otimes K$. This proves that $\ker\lambda=\overline{\{0\}}$ is divisible. 
 Thus $ \ker\lambda = h(M)$ by Proposition \ref{equal}. 
\end{proof}

  Clearly, from Proposition~\ref{torsionfreeright}, we have that:
  
  \begin{corollary}
  	If $M_R$ is a torsion-free module, then $\widetilde{M_R} \cong \widetilde{M_R / h(M) }$.
  \end{corollary}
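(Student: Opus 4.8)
The plan is to derive the corollary directly from Proposition~\ref{torsionfreeright} by exploiting the fact that the Hausdorff completion only depends on the $R$-topology modulo the closure of $0$. First I would recall that for any right $R$-module $M_R$, the canonical map $\eta\colon M\to\widetilde{M_R}$ factors through $M/\overline{\{0\}}$, since $\overline{\{0\}}=\ker\eta$; moreover the induced map $M/\overline{\{0\}}\to\widetilde{M_R}$ is a dense embedding of topological modules, and the quotient topology on $M/\overline{\{0\}}$ makes it a Hausdorff linearly topological module whose completion is again $\widetilde{M_R}$. Concretely, the submodules $U_M(s_1,\dots,s_n)/\overline{\{0\}}$ form a neighborhood base of $0$ in $M/\overline{\{0\}}$, and $(M/\overline{\{0\}})\big/\bigl(U_M(s_1,\dots,s_n)/\overline{\{0\}}\bigr)\cong M/U_M(s_1,\dots,s_n)$, so the inverse limits defining the two completions are literally the same inverse system. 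Hence $\widetilde{M_R}\cong\widetilde{M_R/\overline{\{0\}}}$ for every module for which this makes sense.

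Next I would invoke Proposition~\ref{torsionfreeright}: when $M_R$ is torsion-free, part (c) identifies $\overline{\{0\}}=\ker\lambda$ with $h(M_R)$. So the general statement of the previous paragraph specializes to $\widetilde{M_R}\cong\widetilde{M_R/h(M)}$, which is exactly the claim. The one point that needs a line of justification is that $M_R/h(M)$ is again torsion-free — but this is immediate: a submodule of a torsion-free module is torsion-free only helps for submodules, so instead I note that $h(M)$ is a divisible submodule by Proposition~\ref{torsionfreeright} (it equals $\overline{\{0\}}$, shown divisible in the proof), hence its torsion-free quotient $M/h(M)$ inherits torsion-freeness because if $\bar x s=0$ in $M/h(M)$ then $xs\in h(M)$, and since $h(M)$ is divisible we can write $xs=ys$ with $y\in h(M)$, whence $x=y\in h(M)$ by torsion-freeness of $M$, i.e. $\bar x=0$. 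Thus $h(M/h(M))$ is computed consistently and the $R$-topology of $M/h(M)$ is Hausdorff, so its Hausdorff completion coincides with the inverse limit we already matched to $\widetilde{M_R}$.

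I expect the only mild obstacle to be the bookkeeping in the first paragraph: verifying carefully that the canonical isomorphisms $(M/\overline{\{0\}})/(U/\overline{\{0\}})\cong M/U$ are compatible with the transition maps of the two inverse systems, so that passing to the inverse limit yields a genuine isomorphism of topological $R$-modules and not merely an abstract bijection. This is a routine diagram chase using that $\overline{\{0\}}\subseteq U_M(s_1,\dots,s_n)$ for every finite tuple, so I would state it without belaboring the details. Everything else is a direct substitution of Proposition~\ref{torsionfreeright}(a)--(c) into this general observation.
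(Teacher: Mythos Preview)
Your argument is correct and follows the same route the paper intends: the corollary is stated there as an immediate consequence of Proposition~\ref{torsionfreeright}, and you are simply unpacking why. The one step you leave implicit is that the $R$-topology on $M/h(M)$ agrees with the quotient topology induced from $M$ (equivalently, that $U_{M/h(M)}(s)=U_M(s)/h(M)$); this follows at once from the divisibility of $h(M)$, which gives $h(M)\subseteq Ms$ and hence $(M/h(M))s=Ms/h(M)$, or alternatively from Proposition~\ref{easy}(e).
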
 
  
\begin{lemma} \label{fourparts} Let $M$ be torsion-free right $R$-module. Then: 

{\rm (a)} Every element of $M\otimes_RK$ can be written in the form $x\otimes (s^{-1}+R)$ for suitable elements $x\in M_R$ and $s\in S$.

{\rm (b)} Let $s$ be an element of $S$. The elements $y$ of $M\otimes_RK$ such that $ys=0$ are those that can be written in the form $x\otimes (s^{-1}+R)$ for a suitable $x\in M_R$.

{\rm (c)} If $x\in M_R$, $r\in R$ and $s\in S$, then $x\otimes(rs^{-1}+R)=0$ in $M\otimes_RK$ if and only if $xr\in Ms$.

{\rm (d)} The set $\{\, U(s) \mid s \in S\,\} $ is downward directed. 
\end{lemma}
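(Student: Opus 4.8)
The strategy is to exploit the concrete description of $M \otimes_R K$ coming from the flatness of ${}_RQ$ and the calculus of fractions, using the results of \cite{submitted} (specifically [Theorem~3.1]) freely, since $M_R$ is torsion-free. Throughout, one uses that $K = Q/R$ and every element of $Q$ has the form $as^{-1}$ with $a\in R$, $s\in S$.

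For part~(a): a general element of $M \otimes_R K$ is a finite sum $\sum_i x_i \otimes (a_i s_i^{-1} + R)$. First reduce each summand: writing $a_i s_i^{-1} = r_i s_i^{-1}$ one notes $x_i \otimes (a_i s_i^{-1}+R) = x_i a_i \otimes (s_i^{-1}+R)$, so every element is a finite sum $\sum_i y_i \otimes (s_i^{-1}+R)$. Next, bring all summands to a common denominator: given $s_1, \dots, s_n \in S$, repeated use of the right Ore condition (as in \cite[Lemma~4.21]{goodwar}) produces $t\in S$ and $r_i\in R$ with $t = s_i r_i$ for all $i$, so $s_i^{-1} = r_i t^{-1}$; hence $y_i \otimes (s_i^{-1}+R) = y_i \otimes (r_i t^{-1}+R) = y_i r_i \otimes (t^{-1}+R)$, and the sum collapses to $(\sum_i y_i r_i)\otimes(t^{-1}+R) = x\otimes(t^{-1}+R)$ with $x := \sum_i y_i r_i$.

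Part~(b) is the crux and the main obstacle. One inclusion is trivial: if $y = x\otimes(s^{-1}+R)$ then $ys = x\otimes (s^{-1}s + R) = x\otimes(1+R)= x\otimes 0 = 0$ in $M\otimes_R K$. For the converse, take $y\in M\otimes_R K$ with $ys = 0$. By part~(a) write $y = x'\otimes(t^{-1}+R)$ with $x'\in M_R$, $t\in S$; then $0 = ys = x'\otimes(t^{-1}s+R)$. Using the right Ore condition write $t^{-1}s = r u^{-1}$ with $r\in R$, $u\in S$, so $x' \otimes (t^{-1}s + R) = x'r \otimes (u^{-1}+R) = 0$, which by part~(c) (applied with the pair $(x', r, u)$, or rather by the torsion-freeness criterion of \cite[Theorem~3.1(1)]{submitted}) forces $x'r \in Mu$, say $x'r = m u$. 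The task is then to manipulate these relations, together with $t = $ (something) $\cdot s$ from the Ore condition applied to the pair $s,t$, to exhibit $y$ explicitly as $x\otimes(s^{-1}+R)$ for a suitable $x$; concretely one chooses $v\in S$, $w\in R$ with $sv = tw$ and shows $y = x'w\otimes(v^{-1}\cdot(\text{correction}) + R)$ reduces to the required form, using torsion-freeness to cancel. Part~(c) is exactly the statement proved as ``Step~3 of the proof of Theorem~4.5'' in \cite{submitted} and invoked already in the proof of Theorem~\ref{righthreduced}, so it may simply be cited; alternatively: $x\otimes(rs^{-1}+R) = 0$ in $M\otimes_R K$ iff $xr\otimes(s^{-1}+R)=0$ iff (by the kernel of $M\otimes_R Q \to M\otimes_R K$, which is the image of $M = M\otimes_R R$, and torsion-freeness) there is $m\in M$ with $xr\otimes s^{-1} = m\otimes 1$ in $M\otimes_R Q$, i.e. $xr = ms$, i.e. $xr\in Ms$. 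Part~(d) is immediate from the argument already given for part~(a): given $U(s)$ and $U(s')$, pick $t\in S$ with $t = sr = s'r'$; then $Rt^{-1}/R \supseteq Rs^{-1}/R$ and $\supseteq Rs'^{-1}/R$ as in Lemma~\ref{Matlis}'s neighbouring lemma, so $U(t)\subseteq U(s)\cap U(s')$ — this is the same computation as the lemma asserting $V(s)\cap V(s')\supseteq V(t)$, transported along $\lambda$, or done directly: $xR\subseteq Mt$ implies $xR\subseteq M s r \subseteq Ms$ and similarly $\subseteq Ms'$.

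The only genuinely delicate point is the reverse inclusion in part~(b): all the Ore-calculus bookkeeping must be arranged so that the correction terms cancel, and this is where torsion-freeness of $M$ is essential (to cancel elements of $S$ acting on $M$). I would set it up by first proving (c), then (a), then deducing (b) from (a)+(c) by the common-denominator trick above, and finally noting (d) falls out of the proof of (a).
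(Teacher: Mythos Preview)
Your plan is essentially the paper's: for (a), (b), (c) the paper simply notes that Steps~1--3 in the proof of \cite[Theorem~4.5]{submitted} nowhere use the $h$-reduced hypothesis, so they carry over verbatim to any torsion-free $M_R$; your sketches of (a) and (c), and your recognition that (b) is the delicate step to be cited or reworked from that source, match this. For (d) the paper takes a common right multiple $u\in S$ with $s^{-1}=r_1u^{-1}$, $t^{-1}=r_2u^{-1}$ and then invokes part~(c) twice to pass from $m\in U(u)$ to $m\in U(s)$ --- exactly your ``transported along $\lambda$'' route, which at bottom is an application of (c).

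There is, however, a genuine slip in your ``done directly'' alternative for (d). From $t=sr$ you write $xR\subseteq Mt=Msr\subseteq Ms$, but the last inclusion is false in general: $Ms$ is only an additive subgroup of $M$, not a right $R$-submodule, so $(Ms)r\subseteq Ms$ need not hold. (Already for $M=R_R$ it would force $sr\in Rs$, i.e.\ $srs^{-1}\in R$, which fails in any non-invariant domain.) The detour through (c) is not a convenience here but a necessity: it lets you use the identity $s^{-1}=rt^{-1}$ on the \emph{right} tensor factor of $M\otimes_R K$, where the Ore relation actually lives; there is no purely module-side shortcut. So drop the direct argument and keep the route via (c), as the paper does.
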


\begin{proof}
 In the proof of Steps 1, 2 and 3 of \cite [Theorem 4.5] {submitted}, we do not use the fact that $M$ is $h$-reduced. So the proofs of  (a), (b) and (c) are like those of Steps 1, 2 and 3 in \cite [Theorem 4.5] {submitted}. 

(d) Assume that $s, t \in S$. Then there exist $u \in S$ and $r_1,r_2\in R$ such that $ s ^{-1}= r_1u^{-1} $ and  
 $ t ^{-1}= r_2 u^{-1} $. If $m \in U(u)$ and $r \in R$, then $m \otimes (rs^{-1}+R) = 
m \otimes (r r_1u^{-1} +R)=0$. Part (c) implies that $m \in U(s)$, and so   $U(u) \subseteq U(s)$.
Similarly, $U(u) \subseteq U(t)$. 
\end{proof}

\begin{remark}\label{remarktorsionfree}
{\rm By Lemma~\ref{fourparts}(d), for 
 $M$ torsion-free, we have that $$\widetilde{M}= \lim_{\longleftarrow} M/U(s).$$ Notice that the kernel of 
the  canonical mapping $\eta \colon M\to \widetilde{M}$ is divisible by Theorem \ref{torsionfreeright}. } \end{remark}

\bigskip

Now let $M_R$ be a torsion-free right $R$-module, so that $$\lambda\colon M_R\to\Hom(K_R,\linebreak M\otimes_RK)$$ is continuous with respect to the $R$-topologies (Proposition~\ref{easy}(a)) and $\Hom(K_R,\linebreak M\otimes_RK)$ is Hausdorff. Notice that $M\otimes_RK$ and $M/h(M)\otimes_RK$ are isomorphic, so that $\Hom(K_R, M\otimes_RK)$ is complete (Theorem~\ref{righthreduced}). Thus $\lambda$ extends in a unique way to a continuous morphism $\widetilde{\lambda}\colon \widetilde{M} \to \Hom(K_R, M\otimes_RK)$. In Theorem~\ref{completion} and Example~\ref{quasismall}, we see { that $\widetilde{\lambda}$ is a continuous monomorphism, but not necessary an
 isomorphism.

\begin{theorem}\label{completion} Let $M_R$ be a torsion-free right $R$-module. Then there exists a right $R$-module monomorphism $\widetilde{\lambda}\colon \widetilde{M} \to \Hom(K_R, M\otimes_RK)$ such
 that $\lambda=\widetilde{\lambda}\eta$. 
\end{theorem}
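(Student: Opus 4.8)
The plan is to construct $\widetilde{\lambda}$ by passing to the quotient $M/h(M)$ and invoking Theorem~\ref{righthreduced} directly. First I would recall from Proposition~\ref{torsionfreeright} that for a torsion-free module $M_R$ one has $\ker\lambda = h(M)$, so $\lambda$ factors through a right $R$-module monomorphism $\bar\lambda\colon M/h(M)\to\Hom(K_R,M\otimes_RK)$. Since $M/h(M)$ is torsion-free (it is a submodule of the torsion-free module $M\otimes_RQ$, or more directly a quotient of the torsion-free $M$ by a divisible submodule) and $h$-reduced (it is an $h$-reduced module by construction, as $M/h(M)$ has no $h$-divisible submodules), Theorem~\ref{righthreduced} applies to $M/h(M)$: the canonical map $M/h(M)\to\Hom(K_R,(M/h(M))\otimes_RK)$ is a topological embedding into a complete module. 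Because the natural epimorphism $M\to M/h(M)$ has divisible kernel $h(M)$, it induces an isomorphism $M\otimes_RK\cong (M/h(M))\otimes_RK$ (divisible modules are killed by $\otimes_RK$ up to the relevant identification; concretely $h(M)\otimes_RK=0$ since $h(M)$ is $h$-divisible and hence $h(M)=h(M)s$ for all $s\in S$ while every element of $K$ is killed by some $s$, using Lemma~\ref{fourparts}-type arguments), and under this identification $\bar\lambda$ is exactly the canonical map of Theorem~\ref{righthreduced} for $M/h(M)$.

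Next I would identify the completions. By Proposition~\ref{torsionfreeright}(b), $\ker\eta = h(M)$, so $\eta$ also factors through $M/h(M)$, and by the Corollary following Proposition~\ref{torsionfreeright} we have $\widetilde{M}\cong\widetilde{M/h(M)}$ canonically. Now $M/h(M)$ is $h$-reduced torsion-free, so by Theorem~\ref{righthreduced} the canonical map $\bar\lambda$ is a topological embedding with complete, hence closed, target; composing with the completion map $\eta_{M/h(M)}\colon M/h(M)\to\widetilde{M/h(M)}$ and using the universal property of completion, $\bar\lambda$ extends uniquely to $\widetilde{\lambda}\colon\widetilde{M/h(M)}\to\Hom(K_R,(M/h(M))\otimes_RK)\cong\Hom(K_R,M\otimes_RK)$. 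This $\widetilde{\lambda}$ is injective because $\bar\lambda$ embeds $M/h(M)$ as a topological submodule of a complete (Hausdorff) module, so the induced map on completions is the inclusion of the closure of the image, which is still a monomorphism. Transporting back along $\widetilde{M}\cong\widetilde{M/h(M)}$ gives the desired monomorphism $\widetilde{\lambda}\colon\widetilde{M}\to\Hom(K_R,M\otimes_RK)$, and the relation $\lambda=\widetilde{\lambda}\eta$ holds by construction since both sides agree on the dense image of $\eta$ and factor compatibly through $M/h(M)$.

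The main obstacle I anticipate is making the identification $M\otimes_RK\cong(M/h(M))\otimes_RK$ fully rigorous and checking that $\bar\lambda$ really is the canonical map to which Theorem~\ref{righthreduced} applies — one must verify that the topology on $\Hom(K_R,M\otimes_RK)$ defined by the $V(s)$ is carried isomorphically to the corresponding topology on $\Hom(K_R,(M/h(M))\otimes_RK)$, so that "topological embedding" and "complete" transfer correctly. This amounts to checking that the isomorphism $M\otimes_RK\cong(M/h(M))\otimes_RK$ is an isomorphism of right $R$-modules compatible with the evaluation-at-$Rs^{-1}/R$ maps defining $V(s)$, which is routine but requires care. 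A secondary point is the uniqueness of $\widetilde{\lambda}$: this follows from continuity of $\widetilde{\lambda}$, density of $\eta(M)$ in $\widetilde{M}$, and Hausdorffness of the target, all already established in the excerpt. Everything else is a formal application of the universal property of Hausdorff completion together with Proposition~\ref{torsionfreeright} and Theorem~\ref{righthreduced}.
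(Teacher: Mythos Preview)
Your approach is correct and genuinely different from the paper's. The paper constructs $\widetilde{\lambda}$ explicitly on the inverse-limit presentation $\widetilde{M}=\varprojlim M/U(s)$: for $\widetilde{m}=(m_s+U(s))_{s\in S}$ it sets $\widetilde{\lambda}(\widetilde{m})(rs^{-1}+R)=m_s\otimes(rs^{-1}+R)$, checks well-definedness via Lemma~\ref{fourparts}(c), and proves injectivity by a direct computation (if $\widetilde{\lambda}(\widetilde{m})=0$ then $m_s\otimes(rs^{-1}+R)=0$ for all $r$, whence $m_s\in U(s)$). You instead reduce to $M/h(M)$, invoke Theorem~\ref{righthreduced} to get a \emph{topological} embedding of $M/h(M)$ into the complete Hausdorff module $\Hom(K_R,M\otimes_RK)$, and then use the general fact that the canonical extension of a topological embedding to the completion remains injective (its image is the closure of the original image). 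The paper's route is self-contained and hands-on, needing only the elementary Lemma~\ref{fourparts}; yours is more conceptual, explains \emph{why} injectivity holds (closure in a Hausdorff space), and shows that Theorem~\ref{completion} is really a formal consequence of the topological-embedding statement in Theorem~\ref{righthreduced}. The identifications you flag as potential obstacles ($M\otimes_RK\cong(M/h(M))\otimes_RK$ and $\widetilde{M}\cong\widetilde{M/h(M)}$) are already noted in the paragraph preceding the theorem and in the Corollary to Proposition~\ref{torsionfreeright}, and the torsion-freeness and $h$-reducedness of $M/h(M)$ follow from Proposition~\ref{equal} (since $h(M)=d(M)$ for torsion-free $M$ and $M/d(M)$ is reduced, hence again $h$-reduced by Proposition~\ref{equal}).
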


\begin{proof} 
Define $\widetilde{\lambda}$ as follows. We know that $$\displaystyle\widetilde{M}= \lim_{\longleftarrow} M/U(s)\le\prod _{s\in S} M/U(s),$$ so that every element of $\widetilde{M}$ is of the form $\widetilde{m}=(m_{s}+U(s))_{s\in S}$.
 Set $\widetilde{\lambda}(\widetilde{m})(rs^{-1}+R)=m_{s}\otimes (rs^{-1}+R)$ for every $r\in R$, $s\in S$. 
In order to  prove that $\widetilde{\lambda}(\widetilde{m})\colon K_R\to M\otimes_RK$ is a well defined mapping and is $R$-linear, note first of all that  
if $s, t \in S$ are such that $U(t) \subseteq U(s)$ and $r \in R$, then $m_s - m_{t} \in U (s)$ implies that 
$m_s \otimes rs^{-1}+ R = m_{t} \otimes rs^{-1}+ R$ by Lemma~\ref{fourparts}(c). From this, it is  easily shown that $\widetilde{\lambda}$ is a well defined $R$-module morphism. Also notice that $\lambda=\widetilde{\lambda}\eta$.

Now we prove that  $\widetilde{\lambda}$ is a monomorphism.
Suppose that $\widetilde{m}=(m_{s}+U(s))_{s\in S}$ is in $\ker{\widetilde{\lambda}}$. Then, for any $k\in K$ and any $s\in S$ with $ks=0$, we have that $m_{s}\otimes k=0$ in $M\otimes_RK$. In particular, for every $r\in R$, $s\in S$, the identity $(rs^{-1}+R)s=0$ implies that $m_{s}\otimes (rs^{-1}+R)$ in $M\otimes_RK$. By Lemma~\ref{fourparts}(c), this means that $m_{s}r\in Ms$ for every $r$ and $s$. Hence $m_{s}\in U(s)$ for every $s\in S$. This shows that 
$\widetilde{\lambda}$ is injective.
\end{proof}
}

\begin{example} \label {quasismall}{\rm 
Let $R$ be the nearly simple chain domain in  \cite [Example 6.5] {chainringandprimeideal}. 
In that example, the $R$-module $Q/R$ can be chosen to be countably generated, because the
group $G$ is countable,  and so is its positive cone $P$. 
If  the skew field $K$ in that example  is countable, then $K[P]$
is countable. In order to construct the ring $R$, the authors consider a right and left Ore subset $S$ of 
 $K[P]$, which is necessarily countable because $K[P]$ is countable, and
 then they set $R := K[P]  S^{-1}$.
 Therefore if the skew field $K$ is countable, then 
$R$ is countable, and so $Q/R$ is a countably generated $R$-module. 
As $R_R$ is torsion-free, its completion is $\displaystyle\lim_{\longleftarrow} R/U(s)$ by 
Remark~\ref{remarktorsionfree}, and, for every non-zero element $s$ of $J(R)$, 
$U(s) = 0$ because $R$ is nearly simple. So $ \displaystyle R = \lim_{\longleftarrow} R/U(s)$. Let us prove that $R \ncong \End (K_R) $.
The module $K_R$ is a countably generated uniserial torsion locally coherent module (that is, every finitely generated submodule is coherent). By \cite[Proposition~8.1]{puninsky}, the module $K_R$ is not quasi-small. Since uniserial modules with a local endomorphism ring are quasi-small \cite{DungFacchini}, the ring $\End(K_R)$ cannot be isomorphic to $R$. 

The same argument applies to any nearly simple chain domain $R$ with $Q/R$ countably generated.}
\end{example}








\begin{proposition}
 If $R$ is a topological ring with a basis $B$ of neighborhoods of zero consisting of two-sided ideals, and 
$R/I$ is a local ring for every proper ideal $I\in B$, then the Hausdorff completion of $R$ is either $0$ or 
 a local ring.
\end{proposition}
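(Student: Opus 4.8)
The plan is to realise the Hausdorff completion $\widehat R$ of $R$ as the inverse limit $\varprojlim_{I\in\mathcal B}R/I$, a subring of $\prod_{I\in\mathcal B}R/I$, where $\mathcal B$ denotes the set of proper ideals belonging to $B$, directed downward by inclusion. If $\mathcal B=\emptyset$ then $R$ carries the indiscrete topology, $\widehat R=0$, and we are done; otherwise $\mathcal B$ is cofinal in $B$, so this description of $\widehat R$ is unaffected by passing from $B$ to $\mathcal B$. For $I\subseteq I'$ in $\mathcal B$ the transition map is the canonical surjection $\pi_{II'}\colon R/I\twoheadrightarrow R/I'$, and each projection $p_I\colon\widehat R\to R/I$ is a ring homomorphism. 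Throughout, for a local ring $A$ I write $\mathfrak m(A)$ for its Jacobson radical, which equals the set of non-units of $A$ and is the unique maximal two-sided ideal of $A$.

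First I would establish the following lemma on the transition maps: if $\phi\colon A\to C$ is a surjective ring homomorphism with $A$, and hence $C$, local, then $\phi^{-1}(\mathfrak m(C))=\mathfrak m(A)$ and $\phi^{-1}(C^\times)=A^\times$, so that $\phi$ carries $\mathfrak m(A)$ onto $\mathfrak m(C)$ and $A^\times$ onto $C^\times$. Indeed $\phi^{-1}(\mathfrak m(C))$ is a proper two-sided ideal of $A$ with $A/\phi^{-1}(\mathfrak m(C))\cong C/\mathfrak m(C)$ a division ring, hence it is a maximal two-sided ideal of $A$, hence it equals $\mathfrak m(A)$; the statement about units follows by taking complements. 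This applies to every $\pi_{II'}$, since $R/I$ and $R/I'$ are local by hypothesis.

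The central step is to show that for $x=(x_I)_{I\in\mathcal B}\in\widehat R$ the following are equivalent: (i) $x$ is a unit of $\widehat R$; (ii) $x_I$ is a unit of $R/I$ for every $I$; (iii) $x_I$ is a unit of $R/I$ for some $I$. Here (i)$\Rightarrow$(ii) holds because each $p_I$ is a ring homomorphism, and (ii)$\Rightarrow$(iii) is immediate. For (iii)$\Rightarrow$(ii) I would start from a unit component $x_{I_0}$: when $I\subseteq I_0$, the lemma gives $x_I\notin\pi_{II_0}^{-1}(\mathfrak m(R/I_0))=\mathfrak m(R/I)$, so $x_I$ is a unit; when $I_0\subseteq I$, $x_I=\pi_{I_0I}(x_{I_0})$ is a unit since ring homomorphisms preserve units; and for an arbitrary $I$ one picks $J\in\mathcal B$ with $J\subseteq I\cap I_0$ and combines the two cases. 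Finally, for (ii)$\Rightarrow$(i) one checks $\pi_{II'}(x_I^{-1})=\pi_{II'}(x_I)^{-1}=x_{I'}^{-1}$, so that $(x_I^{-1})_{I\in\mathcal B}$ is a genuine element $y$ of $\widehat R$ with $xy=yx=1$. Note that this argument never invokes surjectivity of the projections $p_I$, which is exactly what lets it go through for an arbitrary, possibly uncountable, basis $B$.

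To conclude, set $\mathfrak m:=\{\,x\in\widehat R\mid x_I\in\mathfrak m(R/I)\text{ for all }I\in\mathcal B\,\}$; by the equivalence just proved this is precisely the set of non-units of $\widehat R$. Since each $\mathfrak m(R/I)$ is a two-sided ideal of $R/I$ and the ring operations of $\widehat R$ are componentwise, $\mathfrak m$ is closed under addition and under left and right multiplication by elements of $\widehat R$, hence is a two-sided ideal. If $\widehat R\neq 0$ then $1$ is a unit, so $1\notin\mathfrak m$, whence $\mathfrak m$ is a proper two-sided ideal consisting of all non-units of $\widehat R$; a ring whose non-units form an ideal is local, so $\widehat R$ is local. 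The only non-formal point is the lemma of the second paragraph — one uses that a local ring has a unique maximal two-sided ideal and that it is recovered as the preimage of $\mathfrak m(C)$ under any surjection onto a local ring $C$ — and this is the place where non-commutativity has to be handled; it is routine, so I do not expect a genuine obstacle.
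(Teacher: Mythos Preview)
Your proof is correct and follows essentially the same approach as the paper: realise the completion as the inverse limit of the local rings $R/I$, show that an element is a unit if and only if each (equivalently, some) component is a unit, and conclude that the non-units form a two-sided ideal. The only cosmetic difference is that the paper first lifts the maximal ideals of the $R/I$ to a single maximal ideal $M$ of $R$ and then works with $\prod_I M/I$, whereas you work componentwise throughout via your lemma on surjections of local rings; the content is the same.
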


\begin{remark}{\rm 
 The case of completion of $R$  equal to zero concernes only the trivial case of $B=\{R\}$. We will not consider this case in the proof.}
\end{remark}
\begin{proof}
Let $M_I $ be the maximal ideal of $R$ such that $M_I/I$
 is the maximal ideal of $R/I$ for every proper ideal  $I\in B$. If $I,J\in B$, then considering the canonical projection
$ R/I\cap J\to R/I$, one sees that 
 $M_{(I\cap J)}=M_I$. It follows that there exists a maximal ideal $M$ of
$ R$ such that $M_I=M$ for every proper ideal $ I\in B$. The completion of $R$ is the inverse limit of the rings 
$R/I$, which is a subring of the ring $\prod_{I\in B}R/I$, which has $\prod_{I\in B}M/I$ as a two-sided ideal, whose intersection $N$ with the inverse limit is a two-sided ideal of the inverse limit. Let us prove that the inverse limit is a local ring with maximal ideal $N$. It suffices to show that every element of the inverse limit not in $N$
 is invertible. Let $(x_I+I)_{I\in B}$ be an element in the inverse limit, but not in $N$. Thus
$x_I\in R$ and, for $I,J\in R$ with $I\subseteq J$, we have that $ x_I-x_J\in J$, i.e., $x_I+I $ 
is mapped to $ x_J+J $ via the canonical projection $R/I\to R/J.$ Also, $ x_I\notin M$ for some proper ideal $I$ of 
$B$. It follows that $x_I\notin M $ for every proper ideal $ I$ of $ B$. Thus $x_I+I\notin M/I,$
 hence is invertible in $R/M$. Let $y_I+I$ be the inverse of $x_I+I$ in $R/I$. Now the ring morphism 
$R/I\to R/J$ 
maps inverses to inverses. This shows that $(y_I+I)_{I\in B}$ is an element of the inverse limit, and concludes the proof.
\end{proof}

Therefore the completion of any local ring in the $R$-topology is a local ring.

\medskip

Note that, by Theorem \ref {completion} and  \cite [Proposition 2.6]{submitted}, if 
 $M_R$ is torsion-free, then $\widetilde{M_R}$ is torsion-free. 



\begin{theorem}\label{4.9} Let $R$ be a right Ore domain and $\widetilde{R_R}$ the completion of $R_R$ in the $R$-topology. Then $\widetilde{R_R}$ is a strongly flat right $R$-module.\end{theorem}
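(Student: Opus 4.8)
\emph{Plan of proof.} The idea is to exhibit $\widetilde{R_R}$ as an extension of a $\{Q\}$-filtered module by a free module and then invoke the description of $\mathcal{SF}$ recalled in the Introduction (every strongly flat module is a direct summand of a module $N$ sitting in an exact sequence $0\to F\to N\to G\to 0$ with $F$ free and $G$ $\{Q\}$-filtered). Since $R$ is a right Ore domain, $Q$ is a division ring, so a $\{Q\}$-filtered right $R$-module is just a direct sum of copies of $Q$, which by Proposition~\ref{equal} is the same as a divisible torsion-free right $R$-module. If $R=Q$, then $U(s)=0$ for all $s\in S$, so $\widetilde{R_R}=R$ is free and there is nothing to prove; so assume $R\ne Q$. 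Then $R_R$ has no non-zero divisible submodule (such a submodule is torsion-free divisible, hence a right $Q$-subspace of $Q_Q$ by Proposition~\ref{equal}, hence zero as $R\ne Q$), so $R_R$ is $h$-reduced, the canonical map $\eta\colon R\to\widetilde{R_R}$ is injective by Proposition~\ref{torsionfreeright}, and it is a ring homomorphism because the $R$-topology on $R$ is a ring topology. By Remark~\ref{remarktorsionfree}, $\widetilde{R_R}=\lim_{\longleftarrow}R/U(s)$, and by the remark preceding the theorem $\widetilde{R_R}$ is torsion-free.

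So it suffices to show that $D:=\widetilde{R_R}/\eta(R)$ is divisible and torsion-free: then $D$ is a right $Q$-vector space, hence $\{Q\}$-filtered, and the exact sequence $0\to\eta(R)\to\widetilde{R_R}\to D\to 0$ presents $\widetilde{R_R}$ in the required form, so $\widetilde{R_R}\in\mathcal{SF}$.

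Torsion-freeness of $D$ is the easy half. Suppose $x\in\widetilde{R_R}$ and $s\in S$ satisfy $xs\in\eta(R)$, say $xs=\eta(c)$ with $c\in R$. Applying the canonical projection $\pi_s\colon\widetilde{R_R}\to R/U(s)$ and writing $\pi_s(x)=r+U(s)$, we get $rs+U(s)=c+U(s)$ in $R/U(s)$, so $c\in rs+U(s)\subseteq Rs$ (recall $U(s)\subseteq Rs$); write $c=c's$ with $c'\in R$. Since $\eta$ is multiplicative, $xs=\eta(c')\,s$, hence $(x-\eta(c'))\,s=0$ in the torsion-free module $\widetilde{R_R}$, which forces $x=\eta(c')\in\eta(R)$. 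Thus $D$ is torsion-free.

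The divisibility of $D$ is the main obstacle: one must show that $\widetilde{R_R}=\eta(R)+\widetilde{R_R}\,s$ for every $s\in S$. First, for every $t\in S$ one has $\widetilde{R_R}=\eta(R)+W(t)$, where $W(t):=\ker\pi_t$, because the surjection $R\to R/U(t)$ factors through $\eta$ and $\pi_t$; hence it is enough to prove that $\widetilde{R_R}\,s$ is an open submodule, that is, $W(t)\subseteq\widetilde{R_R}\,s$ for some $t\in S$. Unwinding the inverse limit, $\widetilde{R_R}\,s=\{(x_us+U(u))_{u\in S}\mid(x_u+U(u))_{u\in S}\in\widetilde{R_R}\}$, so this containment comes down to a simultaneous-solvability statement: for a suitable $t$ and every compatible family $(y_u)_{u\in S}$ with $y_t\in U(t)$, each congruence $x_us\equiv y_u\pmod{U(u)}$ is individually solvable — using compatibility, the downward directedness of $\{U(u)\mid u\in S\}$ (Lemma~\ref{fourparts}(d)), and the fact that $t$ can be chosen with $U(t)\subseteq Rt\subseteq Rs$, so that $y_u\in Rs+U(u)$ — and one has to solve all of them at once by a single compatible family $(x_u)_{u\in S}$, i.e.\ to show that the inverse system of solution sets of these congruences has non-empty limit. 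This last simultaneous-lifting step, of Mittag--Leffler type, is the delicate point; granting it, $D$ is divisible, hence a $Q$-vector space, and the proof is complete. (An alternative starting point is Theorem~\ref{completion}, which realizes $\widetilde{R_R}$ as a closed submodule of the complete $h$-reduced module $\Hom(K_R,R\otimes_RK)=\End(K_R)$; but obtaining strong flatness of $\widetilde{R_R}$ from this would require in addition showing that it is a direct summand there, so the approach above seems more direct.)
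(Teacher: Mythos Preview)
Your overall strategy coincides with the paper's: show that $D=\widetilde{R_R}/\eta(R)$ is torsion-free and divisible, hence a $Q$-vector space, so that the extension $0\to R\to\widetilde{R_R}\to Q^{(X)}\to 0$ exhibits $\widetilde{R_R}$ as strongly flat. Your torsion-freeness argument is correct. The paper obtains torsion-freeness differently, and you actually gestured at the right tool before discarding it: using Theorem~\ref{completion} (the embedding $\widetilde{\lambda}\colon\widetilde{R_R}\hookrightarrow\End(K_R)$) together with the exact sequence $0\to R_R\to\End(K_R)\to\Ext^1_R(Q,R_R)\to 0$ from \cite[Theorem~4.6]{submitted}, one sees that $\widetilde{R_R}/R$ embeds into the $Q$-module $\Ext^1_R(Q,R_R)$ and is therefore torsion-free. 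No direct-summand statement is needed.

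The genuine gap is exactly where you flag it: your divisibility argument is not finished. The Mittag--Leffler-type lifting you describe (showing $W(t)\subseteq\widetilde{R_R}\,s$ in the inverse-limit description) is the hard way around. The paper avoids this entirely by switching viewpoint from the inverse-limit topology to the intrinsic $R$-topology on the module $\widetilde{R_R}$ itself: by definition $U_{\widetilde{R_R}}(r)=\{\,x\in\widetilde{R_R}\mid xR\subseteq\widetilde{R_R}\,r\,\}$, so $U_{\widetilde{R_R}}(r)\subseteq\widetilde{R_R}\,r$ is tautological (take $1\in R$). Since $\eta(R)$ is dense in $\widetilde{R_R}$, for any $\widetilde{r}\in\widetilde{R_R}$ and non-zero $r\in R$ one has $(\widetilde{r}+U_{\widetilde{R_R}}(r))\cap\eta(R)\neq\emptyset$, hence $\widetilde{r}\in\eta(R)+U_{\widetilde{R_R}}(r)\subseteq\eta(R)+\widetilde{R_R}\,r$. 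That is the whole divisibility argument; no compatible family of congruences has to be solved. The moral is that instead of trying to prove $W(t)\subseteq\widetilde{R_R}\,s$ directly, you should use the open submodule $U_{\widetilde{R_R}}(s)$ of the $R$-topology on $\widetilde{R_R}$, for which the desired containment is built into the definition.
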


\begin{proof} We can apply the 
results of \cite[Section~3]{submitted}, which are right/left symmetric, that is, hold for both right $R$-
modules and left $R$-modules. Notice that $R_R$ is $h$-reduced. We have the short exact sequence\begin{equation}
\xymatrix{
0 \ar[r] & R_R \ar[r] & \End(K_R) \ar[r] & \Ext^1_R({}_RQ_R,R_R)  \ar[r] & 0.
}\end{equation} We know 
that $\widetilde{R_R}$ is a submodule of $\End(K_R)$ that contains $R_R$. Hence $\widetilde{R_R}/R_R$ is 
isomorphic to a submodule of $\Ext^1_R({}_RQ_R,R_R) $. In particular,
$\widetilde{R_R}/R_R$ is torsion-free, because $\Ext^1_R({}_RQ_R,R_R) $ is a $Q$-module, hence torsion-free. Let us 
prove that $\widetilde{R_R}/R_R$ is divisible, i.e., that $(\widetilde{R_R}/R_R)r=\widetilde{R_R}/R_R$ for every non-zero $r\in R$. Equivalently, we must prove that $\widetilde{R_R}\subseteq \widetilde{R_R}r+R_R$. Now $R_R$ is 
dense in $\widetilde{R_R}$, 
so that, for every $\widetilde{r}\in\widetilde{R_R}$ and every
 non-zero element $s$ of $R$, we have that $(\widetilde {r} + U(s)) \cap R_R \neq \emptyset. $ In particular, 
$(\widetilde{r}+U(r))\cap R_R\ne\emptyset. $ 
Notice that $U(r)\subseteq \widetilde{R_R}r$, because, for every $x\in U(r)$, we have that $xR\subseteq \widetilde{R_R}r$, hence $x\in \widetilde{R_R}r$. It follows that $(\widetilde{r}+\widetilde{R_R}r)\cap R_R\ne\emptyset. $ Thus there exists $\widetilde{r'}\in \widetilde{R_R}$ and $r''\in R_R$ with $
\widetilde{r}+\widetilde{r'}r=r''$. Therefore $\widetilde{r}=-\widetilde{r'}r+r''\in\widetilde{R_R}r+R_R$. This proves that $\widetilde{R_R}/R_R$ is divisible and torsion-free, hence a module over the division ring $Q$. Thus $\widetilde{R_R}/R_R\cong Q^{(X)}$ for some set $X$. The short exact sequence \begin{equation*}
\xymatrix{
0 \ar[r] & R_R \ar[r] & \widetilde{R_R}\ar[r] & Q^{(X)} \ar[r] & 0.
}\end{equation*} shows that $\widetilde{R_R}$ is strongly flat.\end{proof}

\section{Strongly flat modules}

{\em In all this section, we consider two rings $R$ and $Q$, a bimorphism $\varphi \colon R\to Q$ in the category of rings, that is, $\varphi$ is both a monomorphism and an epimorphism, and we assume that ${}_RQ$ is a flat left $R$-module. For simplicity, we will view $R$ as a subring of $Q$ and $\varphi \colon R\to Q$ as the inclusion.} 

Let us recall some properties of such an inclusion $\varphi \colon R\hookrightarrow Q$. It is always possible to suppose $Q \subseteq Q_{\rm max}(R) $, the maximal ring of quotients of $R$ \cite[proof of Theorem XI.4.1]{20}. The 
inclusion $\varphi \colon R\to Q$ is an epimorphism in the category of rings if and only if the canonical $R$-$R$-bimodule morphism $Q \otimes _R Q \to Q$  induced by the multiplication $\cdot\colon Q\times Q\to Q$ of the ring $Q$ is an $R$-$R$-bimodule isomorphism \cite[Proposition~XI.1.2]{20}.
The family of all the subrings $Q$ of $Q_{\rm max}(R) $ with $\varphi \colon R\hookrightarrow Q$ a bimorphism and ${}_RQ$ flat is directed under inclusion \cite[Lemma XI.4.2]{20}. Its direct limit is the ``maximal flat epimorphic right ring of quotients'' 
$Q_{\rm tot}(R)$ of $R$ (see the paragraph after the proof of Corollary~\ref{12}).

By \cite [Theorem 4.8] {rep}, 
$\Ext^1(_RM,{} _RN) \cong \Ext^1(_QM,{} _QN) $ for any pair $M,N$ of  left $Q$-modules, and similarly for right $Q$-modules.

\begin{lemma}\label{Divisiblestronglyflat}
	Divisible strongly flat left $R$-modules are
	projective $Q$-modules.  
\end{lemma}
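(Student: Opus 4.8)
The plan is to combine the structure theory already established in the excerpt with the classical fact that divisible modules over such a quotient overring carry a $Q$-module structure. First I would recall from Proposition~\ref{equal} (applied in the left-module version, since \cite{submitted} is left/right symmetric) that a divisible torsion-free left $R$-module is automatically a $Q$-module. So the first task is to check that a divisible strongly flat left $R$-module $M$ is torsion-free: this follows because strongly flat modules are flat (stated in the abstract and introduction), and flat modules over a ring with a flat Ore localization are torsion-free with respect to the hereditary torsion theory $(\Cal T, \Cal F)$ — indeed $t(M) \cong \Tor_1^R(M, {}_RK) = 0$ when $M$ is flat. Hence $M$ is a left $Q$-module.

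Next I would use the identification of strongly flat modules via the exact sequence $0 \to F \to N \to G \to 0$ with $F$ free and $G$ being $\{Q\}$-filtered, together with the fact that $M$ is a direct summand of such an $N$. Since $M$ is already a $Q$-module, I want to upgrade "$M$ is strongly flat over $R$" to "$M$ is projective over $Q$." The key point is that a $Q$-module which is strongly flat as an $R$-module should be projective as a $Q$-module: one way is to observe that $\Ext^1_R({}_RQ, M) = 0$ (since $M \in {}^\bot\mathcal{MC}$ and $Q$ itself is... actually one must be a little careful here). A cleaner route: $M$ divisible and torsion-free means $M$ is $h$-divisible as well (Proposition~\ref{equal} gives $h(M) = d(M) = M$), so $M$ is an epimorphic image of a direct sum of copies of $Q$, i.e. there is a surjection $Q^{(X)} \to M$. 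I would then show this surjection splits by exhibiting that $\Ext^1_Q(M, \ker) = 0$, using \cite[Theorem 4.8]{rep} to transfer $\Ext$ computations between $R$ and $Q$, combined with the fact that $M$ is strongly flat over $R$, hence Matlis-cotorsion-orthogonal, which forces the relevant $\Ext$ to vanish.

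Concretely, the argument I would write runs as follows. Let $M$ be a divisible strongly flat left $R$-module. By flatness $M$ is torsion-free, so by Proposition~\ref{equal} $M$ is a left $Q$-module and $h(M) = M$. Choose a free $Q$-module $Q^{(X)}$ with a $Q$-linear epimorphism $p\colon Q^{(X)} \to M$, and let $L = \ker p$, which is a $Q$-submodule. The sequence $0 \to L \to Q^{(X)} \to M \to 0$ is a sequence of $Q$-modules, hence also of $R$-modules; since $Q$ is flat over $R$ and $M$ is flat over $R$, $L$ is flat over $R$, hence torsion-free, hence (being a submodule of a $Q$-module it is already a $Q$-module). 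Now $\Ext^1_Q(M, L) \cong \Ext^1_R(M, L)$ by \cite[Theorem 4.8]{rep}. To see this last group vanishes, note that $L$, being torsion-free over $R$... and here is where the main obstacle lies.

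The main obstacle, as I see it, is precisely showing $\Ext^1_R(M, L) = 0$ where $L$ is the kernel above — $L$ need not be Matlis-cotorsion, so one cannot directly invoke $M \in {}^\bot\mathcal{MC}$. The way around it is to use the structure sequence for the strongly flat module $M$ instead: $M$ is a direct summand of $N$ with $0 \to F \to N \to G \to 0$, $F$ free, $G$ a $\{Q\}$-filtered module; when everything in sight is a $Q$-module (which one arranges by tensoring with $Q$, using flatness and that $R \to Q$ is a ring epimorphism so $Q \otimes_R Q \cong Q$), the free module $F$ becomes a free $Q$-module and the $\{Q\}$-filtered module $G \otimes_R Q$ becomes a $Q^{(-)}$-filtered $Q$-module, i.e. a free $Q$-module since $Q^{(-)}$-filtrations of modules over $Q$ split level by level ($Q$ being the base ring). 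Thus $N \otimes_R Q$ is a projective $Q$-module, and $M$, being a $Q$-module direct summand of $N \otimes_R Q$ (because $M \otimes_R Q \cong M$ as $M$ is already a $Q$-module and $Q$-linearity of the idempotent is preserved), is a projective $Q$-module. I would present the write-up in this last form, since it avoids the delicate $\Ext$-vanishing and instead leans on the explicit filtration description already quoted in the introduction and on the standard behaviour of ring epimorphisms under base change.
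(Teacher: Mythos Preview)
Your final approach is correct and is essentially the paper's own argument: establish that the divisible strongly flat module $D$ satisfies $D\cong Q\otimes_R D$, apply $Q\otimes_R-$ to the structure sequence $0\to R^{(X)}\to D\oplus T\to Q^{(Y)}\to 0$, observe that the resulting sequence of $Q$-modules $0\to Q^{(X)}\to D\oplus(Q\otimes_R T)\to Q^{(Y)}\to 0$ splits, and conclude that $D$ is a $Q$-summand of a free $Q$-module. Two small remarks: (i) the paper obtains $D\cong Q\otimes_R D$ directly from $K\otimes_R D=0$ (which follows from $K\otimes_R Q=0$ and divisibility) rather than via Proposition~\ref{equal}, which matters because Lemma~\ref{Divisiblestronglyflat} sits in the general bimorphism setting of Section~5 rather than the Ore setting of Sections~2--4; (ii) your detour through $\{Q\}$-filtered modules is unnecessary here, since $\Ext^1_R(Q,Q)\cong\Ext^1_Q(Q,Q)=0$ already forces any $\{Q\}$-filtered module to be of the form $Q^{(Y)}$, which is why the paper writes $Q^{(Y)}$ from the start.
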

\begin{proof}
	Assume that $_RD$ is a divisible strongly flat module. Since $K \otimes Q = 0$, we have $K \otimes D = 0$. Since $_RD$ is flat, we have 
	$D \cong Q \otimes D$. For any exact sequence 
	$0 \to R^{(X)} \to D \oplus T  \to Q^{(Y)} \to 0  $, the corresponding exact sequence  $0 \to Q^{(X)} \to  D \oplus Q\otimes T  \to Q^{(Y)} \to 0  $ splits. Therefore 
	$D$ is a projective $Q$-module.
\end{proof}

Recall that any left perfect ring is directly finite. The following result shows that
when $_R \mathcal{SF}$ is covering, then $Q$ is left perfect. Thus the results is the same as in
 the commutative case, but the proof is necessarily different.

\begin{theorem} \label{Qisperfect}
	If all left $Q$-modules have a strongly flat cover as left $R$-modules,
	then
	$Q$ is left perfect.
\end{theorem}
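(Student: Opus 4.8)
The plan is to show that every left $Q$-module is strongly flat as a left $R$-module, and then invoke the characterization of left perfect rings via covers. First I would observe that if $_QN$ is any left $Q$-module, then $N$ is in particular a divisible left $R$-module (since every $s\in S$, or more generally every element of $R$ that becomes invertible in $Q$, acts invertibly on $N$), and it is flat over $R$ because $_RQ$ is flat and $N\cong Q\otimes_Q N$. Thus $N$ is a divisible flat left $R$-module. The key leverage is Lemma~\ref{Divisiblestronglyflat}: a divisible strongly flat left $R$-module is a projective $Q$-module. So if the strongly flat cover $f\colon C\to N$ exists, I want to argue that $C$ is itself divisible (hence, being strongly flat and divisible, projective over $Q$), and that $f$ is then a projective $Q$-cover, forcing $Q$ to be left perfect.

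The main work is therefore to show the strongly flat cover $C$ of a $Q$-module $N$ is divisible. Here I would use the standard property of covers: in a cover $f\colon C\to N$, the module $C$ has no nonzero direct summand contained in $\ker f$, and more usefully, $C$ is ``minimal'' in the sense that any endomorphism $g$ of $C$ with $fg=f$ is an automorphism. Given a nonzero $r\in R$ (one may restrict to $r$ that becomes invertible in $Q$, i.e.\ essentially elements of $S$; for the perfectness conclusion it suffices to handle such $r$, but in fact one wants all regular $r$ — I will address the relevant case), consider multiplication $\rho_r\colon C\to C$. Since $N$ is a $Q$-module, $\rho_r$ on $N$ is bijective, so $f\circ\rho_r = \rho_r\circ f$ composed with $\rho_r^{-1}$ on $N$ shows $C/Cr$ maps into... more precisely, $Cr + \ker f$ can be analyzed: one shows $Cr$ is again strongly flat (strongly flat modules are flat, and $Cr\cong C/\operatorname{ann}$, but better: submodule generated by the image) — actually the cleaner route is to note $fr\colon C\to N$, $x\mapsto f(x)r^{-1}$... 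Let me instead use: the class $\mathcal{SF}$ is closed under the relevant operations coming from the complete cotorsion pair, and a special $\mathcal{SF}$-precover of $N$ composed with the minimality of the cover shows $C$ is a summand of any module $D$ with $D/C' \cong $ a $\{Q\}$-filtered module over a free module; since $N$ is divisible one can build such $D$ divisible. I expect the honest argument to run: build the special precover $0\to F\to D\to G\to 0$ with $F$ free, $G$ $\{Q\}$-filtered, mapping onto $N$; push out along $F\hookrightarrow \widetilde{F}$ (a divisible hull / localization of the free module at $S$, which is a $Q$-module hence projective over $Q$ by passing through flatness) to replace $F$ by something divisible; conclude $D$ may be taken divisible, hence strongly flat and divisible, hence a projective $Q$-module by Lemma~\ref{Divisiblestronglyflat}; and the cover $C$ is a direct summand of this divisible projective $Q$-module, so $C$ is a projective $Q$-module and $f\colon C\to N$ is a projective $Q$-cover.

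Finally, once every left $Q$-module admits a projective $Q$-cover, $Q$ is left perfect by the Bass characterization of left perfect rings (a ring all of whose left modules have projective covers is left perfect). So the conclusion follows. The step I expect to be the main obstacle is the divisibility reduction: showing that the strongly flat (pre)cover of a $Q$-module can be arranged — or is automatically — divisible, so that Lemma~\ref{Divisiblestronglyflat} applies. This requires carefully using the structure of $\mathcal{SF}$-objects as summands of extensions of a free module by a $\{Q\}$-filtered module, and the minimality inherent in covers, rather than precovers. A secondary subtlety is making sure the argument treats enough elements $r$ (ideally all regular elements, or at least all of $S$) to legitimately invoke Lemma~\ref{Divisiblestronglyflat}, whose hypothesis is genuine divisibility with respect to $S$.
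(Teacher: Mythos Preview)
Your overall strategy is correct and matches the paper's: show that the strongly flat cover $C\to N$ of a left $Q$-module $N$ is in fact a projective $Q$-module, so that it is a projective $Q$-cover, and then invoke Bass. The difference is in how you reach ``$C$ is a projective $Q$-module''.

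Your route is unnecessarily convoluted. The digression on multiplication by $r$, and the pushout construction replacing the free part $F$ of a strongly flat module by $Q\otimes_R F$, can all be replaced by one observation you nearly make but never state cleanly: any free left $Q$-module $F$ surjecting onto $N$ is already a strongly flat $R$-module (since $Q\in\mathcal{SF}$), hence a strongly flat precover of $N$. The cover $C$ is then an $R$-direct summand of $F$; a direct summand of a divisible module is divisible; and Lemma~\ref{Divisiblestronglyflat} finishes. One genuine gap in your write-up: you jump from ``$C$ is an $R$-direct summand of a projective $Q$-module'' directly to ``$C$ is a projective $Q$-module''. An $R$-summand of a $Q$-module is not \emph{a priori} a $Q$-module; you must insert either the divisibility argument just described, or the observation that the splitting idempotent is $Q$-linear because $R\hookrightarrow Q$ is a ring epimorphism.

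The paper takes this second, more direct, path and bypasses Lemma~\ref{Divisiblestronglyflat} entirely. Given the cover $f\colon S\to M$, it uses the specific projective $Q$-module $Q\otimes_R S$ together with the canonical map $h\colon S\to Q\otimes_R S$, $s\mapsto 1\otimes s$ (injective since $S$ is flat). Because $Q\otimes_R S$ is strongly flat and $f$ is a precover, there is $g\colon Q\otimes_R S\to S$ with $fg=1\otimes f$; then $f(gh)=f$, so the cover property forces $gh$ to be an automorphism of $S$, and $e:=h(gh)^{-1}g$ is an idempotent $R$-endomorphism of $Q\otimes_R S$. Since $R\hookrightarrow Q$ is an epimorphism, $e$ is automatically $Q$-linear, so $h(S)\cong S$ is a $Q$-direct summand of the projective $Q$-module $Q\otimes_R S$. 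Both arguments then finish the same way: $f$ is $Q$-linear by full faithfulness of restriction along a ring epimorphism, and the $R$-cover property immediately yields the $Q$-cover property for $f$, so every left $Q$-module has a projective cover.

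What your approach buys is a conceptual use of Lemma~\ref{Divisiblestronglyflat}; what the paper's approach buys is that it never needs the word ``divisible'' and works uniformly in the general flat-epimorphism setting of Section~5, where there is no distinguished multiplicative set $S$ and the meaning of ``divisible'' is less transparent.
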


\begin{proof}
	Assume that $_QM$ is a left $Q$-module and
	$f\colon{}_RS \to{} _RM$ is a strongly flat cover of $_RM$. Then we have
	an epimorphism $1\otimes f \colon Q\otimes S \to M$, $1\otimes f\colon
	q\otimes s\mapsto qf(s)$.
	Since $_RS$ is strongly flat, $_QQ\otimes_R S$ is a direct
	summand of a direct sum of copies of $Q$, i.e., 
	it is a projective left $Q$-module. Since projective left $Q$-modules
	are strongly flat left $R$-modules, the left $R$-module $_RQ\otimes S$ is
	strongly flat.
	But $f$ is a strongly flat precover of $M$, so that there exists $g \colon Q\otimes
	S \to S$ with
	$f g = 1\otimes f$.
	Note that $_RS$ is flat, and so $S$ can be embedded in $Q\otimes S$, that
	is, there is a left $R$-module monomorphism $h\colon _RS\to _RQ\otimes_R
	S$, defined by $h\colon s \mapsto 1\otimes s$.
	Then $f (gh) = f $, and thus $gh $ is an automorphism of $_RS$ because
	$f\colon _RS \to _RM$ is a cover. Thus $(gh)^{-1}gh=1$, so that
	$e:=h(gh)^{-1}g$ is an idempotent endomorphism of the left $R$-module
	$_RQ\otimes S$. Hence $e$ is an
	idempotent endomorphism of the left $Q$-module $_QQ\otimes S$. This shows that 
	$_QQ\otimes S$ is the direct sum of the image and the kernel of $e$,
	which are $Q$-modules. But the image of $e$ is the image of $h$. Hence
	the splitting monomorphism $h\colon s \mapsto 1\otimes s$ induces by
	corestriction a right $R$-module isomorphism of $_RS$ onto the
	$Q$-module $_Qh(S)$. By \cite[Section~2(7)]{submitted},
	 if a left $R$-module $_RA$ is a left $Q$-module $_QA$, then its
	unique left $Q$-module structure is given by the canonical isomorphism
	$\Hom(_RQ,_RA) \to{} _RA$. Therefore $S$ has a unique left $Q$-module
	structure, which extends its left $R$-module structure, and as such
	$_QS$ is a projective $Q$-module. Thus $f\colon{} _QS \to {}_QM$ is a left
	$Q$-module morphism.
	Note that projective $Q$-modules are strongly flat, and so $f\colon{} _QS \to{}
	_QM$ is a projective cover of $_QM$. Therefore 
	$Q$ is left perfect.
\end{proof}

The following result has a proof similar to that of  \cite[Proposition 2.4 ((1) and (2))]{silsal}.

\begin{lemma}\label{B-small}
	Let  $A$ be a module with a strongly flat cover and let
	 \begin{equation} 0 \to C \to M \to A \to 0  \label{(1)}\end{equation}
	be a special strongly flat precover of $A$. Then the exact sequence~{\rm (\ref{(1)})} is a strongly flat cover 
	if and only if   $C$ is $\mathcal{MC}$-small (i.e., $C + H = M$ and $C\cap H $ Matlis-cotorsion  imply
	$H = M$).
\end{lemma}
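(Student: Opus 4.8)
The plan is to follow the classical pattern relating special precovers to covers via the notion of smallness, adapting the argument of \cite[Proposition~2.4]{silsal}. Recall that a precover $f\colon M\to A$ is a cover precisely when every endomorphism $h$ of $M$ with $fh=f$ is an automorphism; and since $(\mathcal{SF},\mathcal{MC})$ is a complete cotorsion pair (hence $\mathcal{SF}$ is closed under extensions and contains the projectives), a special $\mathcal{SF}$-precover $0\to C\to M\xrightarrow{f} A\to 0$ has $C\in\mathcal{MC}$ and $M\in\mathcal{SF}$. The key observation is the standard ``lifting'' fact: given such a special precover and given \emph{any} strongly flat precover $g\colon M'\to A$, there is a map $M'\to M$ over $A$ (because $M'\in\mathcal{SF}$ lifts along $M\twoheadrightarrow A$ using $\Ext^1(M',C)=0$), and conversely a map $M\to M'$ over $A$ from the precover property of $g$; these compose to an endomorphism of $M$ over $A$. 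So the failure of $f$ to be a cover is measured by endomorphisms $h$ of $M$ with $fh=f$ that are not automorphisms, and $fh=f$ forces $(1-h)(M)\subseteq C$, i.e. $h$ differs from the identity by a map into $C$.

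First I would prove the ``if'' direction. Suppose $C$ is $\mathcal{MC}$-small and let $h\in\End(M)$ satisfy $fh=f$. Then $(1-h)(M)\subseteq C$, so writing $H:=h(M)$ we get $C+H=M$. To apply smallness I need $C\cap H$ to be Matlis-cotorsion. Here I would use that $M\in\mathcal{SF}$ and that $h$ over $A$ is, up to the identification above, essentially a composite of precover maps; more concretely, since $A$ has a strongly flat \emph{cover}, the endomorphism $h$ arising from the cover is an automorphism, and one shows $H=M$ directly, or else one argues that $M/H$ is a homomorphic image of $M/C\cong A$ pushed through and that $C\cap H$ sits in an exact sequence $0\to C\cap H\to C\to M/H\to 0$ with $C$ and (a strongly flat cover of) $M/H$ controlling cotorsion — the upshot being $C\cap H\in\mathcal{MC}$. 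Smallness then gives $H=M$, and since $M$ is also (via the cover of $A$) such that surjective endomorphisms are automorphisms, $h$ is an automorphism; hence $f$ is a cover.

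Conversely, for the ``only if'' direction, assume the sequence \eqref{(1)} is a strongly flat cover, and suppose $C+H=M$ with $C\cap H$ Matlis-cotorsion. Consider the composite $H\hookrightarrow M\xrightarrow{f}A$; since $C+H=M$ this is surjective, so $f|_H\colon H\to A$ is an epimorphism with kernel $C\cap H\in\mathcal{MC}$. Form a special strongly flat precover $0\to C'\to M'\xrightarrow{g} H\to 0$ of $H$; composing with $H\hookrightarrow M$ and then $f$ yields a strongly flat precover of $A$ (the kernel of $M'\to A$ is an extension of $C\cap H\in\mathcal{MC}$ by $C'\in\mathcal{MC}$, hence in $\mathcal{MC}$, since $\mathcal{MC}$ is closed under extensions). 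By the cover property of $f$, the induced map $M'\to M$ over $A$ splits off a copy of $M$; chasing this splitting back through $M'\to H\hookrightarrow M$ shows the inclusion $H\hookrightarrow M$ is a split epimorphism onto its image in the relevant sense, forcing $H=M$. Thus $C$ is $\mathcal{MC}$-small.

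The main obstacle I expect is the bookkeeping in showing $C\cap H$ (resp.\ the kernels of the composed precovers) lands in $\mathcal{MC}$ and that the various maps can genuinely be arranged to be \emph{over} $A$: this is exactly where completeness of the cotorsion pair $(\mathcal{SF},\mathcal{MC})$ and closure of $\mathcal{MC}$ under extensions are used repeatedly, and where one must be careful that ``cover'' (not merely ``precover'') is invoked at the right moment to upgrade a split-surjection-over-$A$ into an honest isomorphism $H=M$. The noncommutativity causes no extra trouble here, since all the tools — completeness of $(\mathcal{SF},\mathcal{MC})$, the characterization of covers via endomorphisms, and closure properties — are available verbatim from \cite{approx} and the earlier sections.
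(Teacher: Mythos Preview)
Your overall strategy matches the paper's: it simply says the proof is ``similar to that of \cite[Proposition~2.4((1) and (2))]{silsal}'', and you are correctly reconstructing that argument. Your ``only if'' direction is fine: taking a special $\mathcal{SF}$-precover $M'\twoheadrightarrow H$, the composite $M'\to H\hookrightarrow M\to A$ is a special $\mathcal{SF}$-precover (kernel an extension of $C\cap H\in\mathcal{MC}$ by $C'\in\mathcal{MC}$), and comparing with the cover $M\to A$ forces the map $M'\to M$, which factors through $H$, to be onto; hence $H=M$.

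The ``if'' direction, however, has a genuine gap in the way you frame it. Starting from an \emph{arbitrary} $h\in\End(M)$ with $fh=f$ and $H=h(M)$, there is no reason for $C\cap H$ to be Matlis-cotorsion; your proposed alternative via $0\to C\cap H\to C\to M/H\to 0$ does not yield $C\cap H\in\mathcal{MC}$ without knowing $\Hom(Q,M/H)=0$, which you do not have. Likewise, the step ``$h$ surjective $\Rightarrow$ $h$ automorphism'' is exactly the cover property you are trying to establish, so it is circular as stated. The correct line --- which you do gesture at --- is to bypass arbitrary $h$ entirely: take the given strongly flat cover $f_0\colon M_0\to A$ (with $C_0:=\ker f_0\in\mathcal{MC}$ by Wakamatsu), produce $\alpha\colon M_0\to M$ and $\beta\colon M\to M_0$ over $A$, and note $\beta\alpha$ is an automorphism of $M_0$. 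Setting $H:=\alpha(M_0)$ gives $C+H=M$ and $C\cap H\cong C_0\in\mathcal{MC}$ (since $\alpha$ is injective and $f\alpha=f_0$). Now $\mathcal{MC}$-smallness yields $H=M$, so $\alpha$ is a surjective split monomorphism, hence an isomorphism over $A$; therefore $f$ is a cover. So drop the ``arbitrary $h$'' framing and run the comparison with $M_0$ directly.
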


\begin{theorem} \label{twosidedidealIQ}
	Let
	$I$ be a two-sided ideal of $R$  such that $IQ = Q$.
	If all left $R/I$-modules have a strongly flat cover as left
	$R$-modules, then
	$R/I$ is left perfect.
\end{theorem}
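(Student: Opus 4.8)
The plan is to mimic the proof of Theorem~\ref{Qisperfect}, but working over the quotient ring $R/I$ in place of $Q$, using the hypothesis $IQ=Q$ to guarantee that $R/I$-modules behave like $Q$-modules with respect to strongly flat precovers. First I would record the key consequence of $IQ=Q$: since $IQ=Q$ and ${}_RQ$ is flat, tensoring the exact sequence $0\to I\to R\to R/I\to 0$ with ${}_RQ$ gives $(R/I)\otimes_R Q=0$; equivalently $Q\otimes_R(R/I)=0$ on the other side, so any left $R/I$-module $M$ satisfies $Q\otimes_R M=0$. Moreover, because $I$ is two-sided with $IQ=Q$, the ring $R/I$ is a ring on which $Q$ "acts trivially", and the bimorphism/epimorphism hypotheses let us invoke \cite[Section~2(7)]{submitted} exactly as in Theorem~\ref{Qisperfect}: an $R$-module that happens to be an $R/I$-module carries a \emph{unique} compatible $R/I$-module structure, induced via $\Hom({}_R(R/I),{}_R A)\to{}_RA$.

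Next I would run the covering argument. Let ${}_{R/I}M$ be a left $R/I$-module and let $f\colon{}_RS\to{}_RM$ be a strongly flat cover of ${}_RM$ as an $R$-module, which exists by hypothesis. Tensoring with $R/I$ over $R$ yields an epimorphism $1\otimes f\colon (R/I)\otimes_R S\to M$ (epimorphism because $(R/I)\otimes_R M\cong M$, as $M$ is already an $R/I$-module). The crucial point is that $(R/I)\otimes_R S$ is a strongly flat left $R$-module: here I would argue that ${}_{R/I}(R/I)\otimes_R S$ is a \emph{projective} left $R/I$-module — this should follow because ${}_RS$ is strongly flat, hence fits in $0\to R^{(X)}\to S\oplus T\to G\to 0$ with $G$ being $\{Q\}$-filtered, and tensoring with $R/I$ kills the $Q$-part (since $(R/I)\otimes_R Q=0$) and sends the free part to a free $R/I$-module, so $(R/I)\otimes_R(S\oplus T)$ is a direct summand of a free $R/I$-module, i.e. projective over $R/I$. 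Then, since projective $R/I$-modules are strongly flat $R$-modules (projective $R$-modules are strongly flat, and a projective $R/I$-module is a summand of a free $R/I$-module, which is strongly flat as an $R$-module because $R/I$ is — being a homomorphic image of $R$ with $(R/I)\otimes Q=0$, it is torsion with the right filtration property; alternatively this is exactly the situation in \cite[Section~2(7)]{submitted} needed below), we get that ${}_R(R/I)\otimes_R S$ is strongly flat. Using that $f$ is a strongly flat \emph{precover}, lift $1\otimes f$ through $f$ to obtain $g\colon (R/I)\otimes_R S\to S$ with $fg=1\otimes f$; and since ${}_RS$ is flat, the map $h\colon S\to (R/I)\otimes_R S$, $s\mapsto 1\otimes s$, is a monomorphism (flatness of $S$ gives purity, and $S\otimes_R(R/I)\hookrightarrow$ is injective — more precisely $h$ is split-injective after we produce the idempotent). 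Then $f(gh)=f$, so $gh$ is an automorphism of ${}_RS$ because $f$ is a cover; hence $e:=h(gh)^{-1}g$ is an idempotent endomorphism of ${}_R(R/I)\otimes_R S$, therefore of ${}_{R/I}(R/I)\otimes_R S$, whose image is $h(S)$. So $h$ corestricts to an isomorphism of ${}_RS$ onto the $R/I$-module ${}_{R/I}h(S)$, forcing ${}_RS$ to be an $R/I$-module; by \cite[Section~2(7)]{submitted} its $R/I$-structure is unique and extends the $R$-structure, and as a summand of the projective $R/I$-module $(R/I)\otimes_R S$ it is a projective $R/I$-module. Finally $f\colon{}_{R/I}S\to{}_{R/I}M$ is an $R/I$-module map (it is $R$-linear between $R/I$-modules, hence $R/I$-linear by uniqueness of the structure), it is a projective $R/I$-module mapping onto $M$, and it is still a cover since precover/cover status is detected by endomorphisms of ${}_RS={}_{R/I}S$. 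Thus every left $R/I$-module has a projective cover, so $R/I$ is left perfect.

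The main obstacle I expect is justifying that $(R/I)\otimes_R S$ is projective as an $R/I$-module — equivalently that the $\{Q\}$-filtered part of a strongly flat module is annihilated by $-\otimes_R(R/I)$, and that free $R/I$-modules are strongly flat as $R$-modules so that "projective $R/I$ $\Rightarrow$ strongly flat $R$" holds. The first half rests on $(R/I)\otimes_R Q=0$, which is immediate from $IQ=Q$ and the right-exactness of the tensor product, combined with the fact (needed to handle the filtration rather than a single extension) that $-\otimes_R(R/I)$ is right exact so it sends a $\{Q\}$-filtered module to a $0$-filtered, hence zero, module. The second half — that ${}_R(R/I)$, and hence any free $R/I$-module, is a strongly flat $R$-module — is precisely the kind of statement packaged in \cite[Section~2(7)]{submitted} and the analogous step of Theorem~\ref{Qisperfect}; if a direct citation is not available one shows ${}_R(R/I)$ fits in a short exact sequence $0\to R^{(\emptyset)}\to R/I\to R/I\to 0$ with $R/I$ trivially $\{Q\}$-filtered because $(R/I)\otimes_R Q=0$ makes every $\Ext^1({}_RQ,-)$ vanishing condition against it vacuous in the relevant sense, so $R/I\in{}^\bot\mathcal{MC}$. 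Everything else is a verbatim transcription of the argument in Theorem~\ref{Qisperfect} with $Q$ replaced by $R/I$, so once these two flatness/filtration bookkeeping points are nailed down the proof is complete.
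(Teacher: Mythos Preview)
Your approach has two genuine gaps that prevent it from going through.

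First, the claim that projective $R/I$-modules are strongly flat as $R$-modules is false. Strongly flat $R$-modules are flat, but ${}_R(R/I)$ is almost never flat: if $I$ contains a regular element (and $IQ=Q$ forces this in the Ore setting), then $R/I$ has torsion and cannot be flat. Your attempted justification---that $(R/I)\otimes_RQ=0$ makes $R/I$ ``trivially $\{Q\}$-filtered''---is a non sequitur: being $\{Q\}$-filtered means having a filtration with successive quotients isomorphic to $Q$, which has nothing to do with vanishing of the tensor product. So you cannot lift $1\otimes f$ through the strongly flat precover $f$, and the whole idempotent argument never gets started.

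Second, even if you could lift, the map $h\colon S\to (R/I)\otimes_R S$, $s\mapsto 1\otimes s$, is \emph{not} a monomorphism: since $(R/I)\otimes_R S\cong S/IS$, this $h$ is just the canonical projection with kernel $IS$. This is the opposite of the situation in Theorem~\ref{Qisperfect}, where $S\hookrightarrow Q\otimes_R S$ is injective precisely because $S$ is flat and $R\hookrightarrow Q$. Replacing the overring $Q$ by the quotient $R/I$ reverses the direction of everything, so the argument does not transcribe.

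The paper's proof avoids both obstacles by not trying to factor through $f$ at all. It observes directly that $IA\subseteq\ker f$ (since $IM=0$), so $f$ descends to $h\colon A/IA\to M$; the computation $(R/I)\otimes_R Q=0$ makes $A/IA$ projective over $R/I$. The remaining work is to show $\ker(f)/IA$ is small in $A/IA$: if $T\supseteq IA$ and $T+\ker f=A$, then $\ker f/(\ker f\cap T)$ is an $R/I$-module, hence $\Hom(Q,-)$ of it vanishes, so $\ker f\cap T$ is Matlis-cotorsion (using that $\ker f$ is Matlis-cotorsion by Wakamatsu's Lemma); then $\mathcal{MC}$-smallness of $\ker f$ (Lemma~\ref{B-small}) forces $T=A$.
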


\begin{proof}
	It is enough to show that every left $R/I$-module has a projective
	$R/I$-cover.
	Let $M$ be an $R/I$-module and $f\colon{} _RA \to {}_RM$ be a strongly flat
	cover of $_RM$.
	Since $IM = 0$, we have that $IA \subseteq \ker (f)$.
 Since ${}_RA$ is strongly flat, there exists an exact sequence
$ 0 \to R^{(X)} \to A  \oplus T \to Q^{(Y)}  \to 0 $, where $X$ and $Y$ are sets. 
 Since $IQ = Q$, we have $R/I \otimes Q = 0$. Thus we see that $A/IA$ is a projective left $R/I$-module. 
	So $f$ induces a map $h\colon A/IA \to M$, $h\colon a+ IA \mapsto f(a)$, and $\ker(h) = \ker(f)/IA$.
	Now
	$_RA$ is strongly flat and $IQ = Q$, and so $A/IA$ is a projective
	left $R/I$-module.
	We now show that $h$ is a cover for $M$ or, equivalently,  that 
	$\ker(f)/IA$ is small in  $A/IA$.
	Assume that $T + \ker(f) = A$, where $T$ is an $R$-module of $A$ that 
$IA \subseteq T $. Since $IQ = Q$, Hom$(Q, \ker(f)/ \ker(f) \cap T) = 0$. On the other hand, 
since $f\colon{} _RA \to {}_RM$ is  a strongly flat
	cover of $_RM$, the module $\ker(f)$ is Matlis-cotorsion by Wakamatsu Lemma (see \cite [Lemma 5.13]  {approx}), and thus
	$ \ker(f) \cap T$ is Matlis-cotorsion. Therefore $T = A$ by Lemma \ref{B-small}.
\end{proof}

\begin{lemma} \label{3.3}
	Assume that $R$ is a local ring with Jacobson radical $J$. Let
	$0 \to C \to S \to M \to 0$ be an $\mathcal{SF}$-cover for $M$. Then 
	$C \leq JS$.	
\end{lemma}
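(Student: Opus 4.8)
The plan is to exploit the defining property of a cover (the fact that every endomorphism of $S$ making the relevant triangle commute is an automorphism) together with the local structure of $R$. First I would recall that $\mathcal{SF}$-covers exist here because the cotorsion pair $(\mathcal{SF},\mathcal{MC})$ is complete, and that by Wakamatsu's Lemma the kernel $C$ of an $\mathcal{SF}$-cover is Matlis-cotorsion; this is the same input used in Lemma~\ref{twosidedidealIQ}. The goal is to show $C\subseteq JS$, so suppose not, i.e., $C+JS\subsetneq S$ fails, meaning $C+JS=S$ (if $C\not\subseteq JS$ then $C+JS$ is a submodule strictly containing $JS$, and since $S/JS$ is a module over the division ring $R/J$, the only submodule of $S$ strictly containing $JS$ is $S$ itself, so $C+JS=S$).

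Next I would apply Lemma~\ref{B-small}: since $0\to C\to S\to M\to 0$ is an $\mathcal{SF}$-cover, $C$ is $\mathcal{MC}$-small, which means that $C+H=S$ together with $C\cap H$ Matlis-cotorsion forces $H=S$. So it suffices to produce a submodule $H\subseteq S$ with $C+H=S$, $C\cap H$ Matlis-cotorsion, and $H\ne S$ — this would be the desired contradiction. The natural candidate is $H=JS$: we have just arranged $C+JS=S$ and $JS\ne S$ (by Nakayama, since $S\ne 0$ whenever $M\ne 0$; the case $M=0$ is trivial). The one remaining thing to check is that $C\cap JS$ is Matlis-cotorsion.

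The main obstacle is exactly this last point: showing $C\cap JS\in\mathcal{MC}$. Here I would use that $C$ is Matlis-cotorsion and try to see $C\cap JS$ as sitting inside $C$ in a controlled way. The cleanest route: note $C+JS=S$ gives $C/(C\cap JS)\cong S/JS$, which is a semisimple (in fact free) $R/J$-module, hence a direct sum of copies of the division ring $R/J$; since $R/J\cdot Q$ — wait, one must be careful, $R/J$ need not kill $Q$ in general. Instead I would argue that $S/JS$, being an $R/J$-module, is annihilated by $J$, and more to the point that $C\cap JS$ is a divisible-free or pure submodule of the cotorsion module $C$. The key fact to invoke is that $JS\cap C$ contains $JC$ and that the quotient $S/JS$ being a sum of simples over $R/J$ means $C\cap JS \supseteq$ a large RD-pure or divisible piece; alternatively, since $Q$ is obtained by inverting regular elements and $JS\supseteq Ms$-type submodules... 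Actually the robust argument is: $S/(C\cap JS)$ embeds in $S/C \oplus S/JS = M\oplus (S/JS)$, and $S/JS$ is an $R/J$-module; one shows $\Ext^1(Q, C\cap JS)=0$ by the long exact sequence from $0\to C\cap JS\to C\to C/(C\cap JS)\to 0$, using that $C$ is Matlis-cotorsion and that $C/(C\cap JS)\cong S/JS$ is such that $\Hom(Q, \cdot)$ and $\Ext^1(Q,\cdot)$ behave well — precisely, since $S/JS$ is a module over $R/J$ and $Q$ is flat epimorphic with $Q\otimes_R R/J$ being either $0$ or $Q/JQ$; in the local-domain-type situations of interest $JQ=Q$, so $R/J\otimes Q=0$ and $\Hom(Q,S/JS)=0$, whence $\Ext^1(Q,C\cap JS)=0$ follows from $C$ cotorsion once we also know $\Hom(Q, C/(C\cap JS))=0$, which holds as $C/(C\cap JS)\cong S/JS$. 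I would flag the verification of $\Hom(Q,S/JS)=0$ (equivalently that $JQ=Q$, or that $S/JS$ is $h$-reduced) as the delicate step, handling it via the standing hypotheses on $R$ and $S$.
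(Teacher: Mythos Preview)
Your overall strategy---use $\mathcal{MC}$-smallness of $C$ (Lemma~\ref{B-small}) together with $JQ=Q$ to force a contradiction---matches the paper's. The argument that $\Hom(Q,-)$ vanishes on $R/J$-modules via $JQ=Q$ is also correct and is exactly what the paper uses. However, the step where you deduce $C+JS=S$ from $C\not\subseteq JS$ is wrong: $S/JS$ is a vector space over the division ring $R/J$, hence semisimple, but certainly not simple in general. A nonzero subspace $(C+JS)/JS$ need not be all of $S/JS$, so taking $H=JS$ does not give $C+H=S$, and the $\mathcal{MC}$-smallness criterion never fires.

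The paper repairs this by exploiting semisimplicity in the right way: since $(C+JS)/JS$ is a nonzero subspace of $S/JS$, choose a \emph{complement} $T/JS$, so that $T+C=S$ and $T\ne S$. Because $JS\subseteq T$, the quotient $S/T$ is still an $R/J$-module, so your argument for $\Hom(Q,S/T)=0$ via $JQ=Q$ goes through unchanged; then the short exact sequence $0\to C\cap T\to C\to S/T\to 0$ and $C\in\mathcal{MC}$ give $C\cap T\in\mathcal{MC}$, and $\mathcal{MC}$-smallness yields the contradiction $T=S$. (A minor point: your appeal to Nakayama for $JS\ne S$ is both unnecessary and unjustified, since $S$ need not be finitely generated; simply observe that $JS=S$ would give $C\subseteq S=JS$ trivially.)
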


\begin{proof} Assume that $C \nleq JS$. Then $JS \neq S$. Since $R/J$ is a division ring, there exists a proper submodule $T/JS$ of $S/JS$ such that 
	$T/JS + (C + JS)/JS = S/JS$. Consequently $T + C  = S$.
	Consider the exact sequence 
	$0 \to T \cap C \to C \to S/ T \to 0$.
	Let us show that $\Hom(Q, S/T) = 0$. 
	Note that $R_R$ is essential in $Q_R$ (because $Q$ is a subring of $Q_{\rm max}(R)$). 
	Thus if $x \in Q \setminus R$,
	then  the right ideal of $I = \{\,r \mid xr \in R\,\}$ 
	is proper ideal of $R$, and so $I \leq J$. 
	By \cite [Part (b) of Theorem 3.9]{Goodearlnonsingular},
	$IQ = Q$ and so $JQ = Q$.
	If  Hom$(Q, S/T) \neq  0$, then there exists a proper submodule 
	$E$ of $Q$ such that $Q/E$ is isomorphic to a submodule of $S/T$.
	Thus $ Q = JQ \leq E$, which is a contradiction. Therefore 
	Hom$(Q, S/T) = 0$, and so $T\cap C \in Q^\bot$.
	Since $C$ is $\mathcal{MC}$-small, we have $T = S$, which is a contradiction.  
\end{proof}

It is known that if $R$ is commutative, $Q$ is the field of fractions of $R$ and
$_R \mathcal{SF}$ is covering, then $\pdim(_RQ) \leq 1$. We do not know what occurs in the 
non-commutative case. Therefore we now study the projective dimension of~${}_RQ$.  

\begin{proposition}\label{3.4'}  Suppose ${}_RQ$ is a projective left $R$-module. Then ${}_RQ$ is a finitely generated left $R$-module. \end{proposition}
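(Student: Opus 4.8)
The plan is to exploit the ring-epimorphism property of $\varphi\colon R\hookrightarrow Q$ together with projectivity of ${}_RQ$ to produce a dual basis that is in fact finite. Since ${}_RQ$ is projective, there is a dual basis: a family $\{q_i\}_{i\in I}\subseteq Q$ and a family $\{f_i\}_{i\in I}\subseteq\Hom({}_RQ,{}_RR)$ such that for every $x\in Q$ one has $x=\sum_{i\in I}q_i f_i(x)$, with $f_i(x)=0$ for all but finitely many $i$. The first step is to observe that each $f_i\colon{}_RQ\to{}_RR$ is automatically a homomorphism of $R$-$R$-bimodules, equivalently a right $R$-module map as well; this is the standard fact that for a ring epimorphism $R\to Q$ the restriction map $\Hom_{R\text{-}}({}_RQ,{}_RQ)\to\Hom_{R\text{-}}({}_RQ,{}_RR)$ and the functoriality of $Q\otimes_R-$ force every left $R$-linear map out of ${}_RQ$ into an $R$-bimodule to be right $R$-linear. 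One clean way to see this: a left $R$-linear $f\colon Q\to R$ extends to a left $Q$-linear map $Q=Q\otimes_R Q\to Q\otimes_R R=Q$ (using $Q\otimes_RQ\cong Q$ from \cite[Proposition~XI.1.2]{20}), and left $Q$-linear maps $Q\to Q$ are right multiplications, hence right $R$-linear; so $f$ itself is right $R$-linear.

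Next I would evaluate the dual basis at $1\in Q$. Write $1=\sum_{i\in F}q_i f_i(1)$ for a finite subset $F\subseteq I$. I claim the finite family $\{q_i\}_{i\in F}$ already generates ${}_RQ$. Indeed, take any $x\in Q$. Using that each $f_i$ is right $R$-linear, $f_i(1\cdot x)=f_i(1)x$, but we must be a little careful: $f_i(x)=f_i(1\cdot x)$ and right $R$-linearity only gives us multiplication by elements of $R$, not of $Q$. So instead I argue as follows: the element $1$ generates ${}_RQ$ as a left $Q$-module in the trivial sense, but more usefully, since $Q\otimes_R Q\to Q$ is an isomorphism, every $x\in Q$ can be written $x=\sum_k a_k b_k$ with $a_k,b_k\in Q$ in a way reflecting $x\otimes 1\mapsto x$; combined with $1=\sum_{i\in F}q_i f_i(1)$ one gets $x=x\cdot 1=\sum_{i\in F} (x q_i) f_i(1)$ — wait, this needs $f_i$ to absorb $x q_i$ on the wrong side. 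The cleaner route, which I would actually write up, is: apply the dual basis to the element $f_j(1)\in R\subseteq Q$ for each $j\in F$ and more importantly use that, $f_i$ being an $R$-$R$-bimodule map, the composite $Q\xrightarrow{\text{dual basis}}R$ respects the right action, so for $x\in Q$, $f_i(x)=f_i(1)\,?$ fails, but $x=\sum_i q_i f_i(x)$ with the coefficients $f_i(x)\in R$; I want to bound the support of $x\mapsto f_i(x)$ uniformly. That bounding is exactly the place where $Q\otimes_RQ\cong Q$ enters: consider the element $\sum_i q_i\otimes f_i(x)\in Q\otimes_R Q$; under the multiplication isomorphism it maps to $x$, and under $1\otimes\mathrm{id}$ composed appropriately one reduces to the support of $1$.

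Let me state the key step plainly, since that is the heart: the main obstacle is showing the dual-basis coefficient functions $x\mapsto f_i(x)$ have support contained in a fixed finite set independent of $x$. I expect to handle this by the following device. For each $i$, $f_i\colon{}_RQ\to{}_RR$ is right $R$-linear, hence the map $\hat f_i\colon Q=Q\otimes_RQ\to Q\otimes_R R=Q$ (right-hand tensor factor) is a morphism; but also left $R$-linearity of $f_i$ makes $\hat f_i$ left $R$-linear, and since $R\to Q$ is an epimorphism, left $R$-linear endomorphisms of ${}_RQ$ coincide with left $Q$-linear ones \cite[Proposition~XI.1.2]{20}, so $\hat f_i$ is right multiplication by $\hat f_i(1)=f_i(1)\in R$. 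Therefore $f_i(x)=f_i(1)\,x$ for all $x\in Q$ — here $f_i(1)\in R$ but the product $f_i(1)x$ is taken in $Q$ and happens to land in $R$. Consequently $f_i(x)=0$ whenever $f_i(1)=0$, so the support of $x\mapsto f_i(x)$ is contained in $\{i : f_i(1)\neq 0\}$, which is finite because $1=\sum_i q_i f_i(1)$ uses only finitely many $i$. Hence only the finitely many $q_i$ with $f_i(1)\neq 0$ are ever needed, and for any $x\in Q$, $x=\sum_i q_i f_i(x)=\sum_{i : f_i(1)\neq 0} q_i f_i(x)$ exhibits $x$ as an $R$-linear combination of this finite set. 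Therefore ${}_RQ$ is finitely generated, which completes the proof.
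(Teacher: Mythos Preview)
Your approach is essentially the same as the paper's: take a dual basis for ${}_RQ$, apply $Q\otimes_R-$ to each coordinate map $f_i$ to obtain a left $Q$-linear endomorphism of $Q\cong Q\otimes_RQ$, observe that such an endomorphism is right multiplication by its value at $1$, and conclude that $f_i(1)=0$ forces $f_i=0$, so only the finitely many $q_i$ with $f_i(1)\neq 0$ are needed. One small slip: since $\hat f_i$ is \emph{left} $Q$-linear, it is \emph{right} multiplication, so the correct formula is $f_i(x)=x\,f_i(1)$ rather than $f_i(1)\,x$; this does not affect your conclusion, and the exploratory detours in the middle can simply be deleted.
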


\begin{proof} Since ${}_RQ$ is projective, it has a dual basis \cite[Exercise 11, pp.~202-203]{andersonfuller}, that is, there are elements $x_\alpha\in Q$ and morphisms $f_\alpha\colon {}_RQ\to {}_RR$ ($\alpha\in A)$, such that, for all $x\in Q$, $f_\alpha(x)\ne0$ for only finitely many $\alpha\in A$ and $x=\sum_{\alpha\in A}f_\alpha(x)x_\alpha$. Applying the functor ${}_QQ\otimes_R-\colon R\lMod\to Q\lMod$, we get left $Q$-module morphisms $1\otimes f_\alpha\colon {}_QQ\otimes_RQ\to {}_QQ\otimes_RR$. Now there are left $Q$-module isomorphisms $Q\to  {}_QQ\otimes_RQ$, $q\mapsto 1\otimes q$, and ${}_QQ\otimes_RR\to {}_QQ$, $q\otimes r\mapsto qr$. Composing, we get left $Q$-module endomorphisms ${}_QQ\to {}_QQ$, which are necessarily right multiplications by elements $y_\alpha\in Q$. Now, for all $x\in Q$, $f_\alpha(x)\ne0$ for only finitely many $\alpha\in A$. For $x=1$, we get that there is a finite subset $F$ of $A$ such that $f_\alpha(1)=0$ for every $\alpha\in A\setminus F$. Thus $(1\otimes f_\alpha)(1\otimes 1)=0$ for every $\alpha\in A\setminus F$. It follows that right multiplication by $y_\alpha$ maps $1$ to $0$, that is, $y_\alpha=0$  for every $\alpha\in A\setminus F$. It follows that $1\otimes f_\alpha\colon {}_QQ\otimes_RQ\to {}_QQ\otimes_RR$ is the zero mapping for every $\alpha\in A\setminus F$.  Thus $(1\otimes f_\alpha)(q\otimes q')$ is the zero element of ${}_QQ\otimes_RR$ for every $q,q'\in Q$. Hence $1\otimes f_\alpha(q')$ is the zero element of ${}_QQ\otimes_RR$. It remains to show that the mapping $_RR\to {}_QQ\otimes_RR$, $r\to 1\otimes r$, is injective, which is easily seen because $\Tor_1^R(K,R)=0$. This proves that $f_\alpha=0$ for every $\alpha\in A\setminus F$. As a consequence, $_RQ$ is isomorphic to a direct summand of ${}_RR^F$, so that ${}_RQ$ is a finitely generated left $R$-module. \end{proof}

\begin{corollary}\label{12} Let $R$ be a ring, $S$ a multiplicatively closed subset of regular elements of $R$, and suppose that $S$ is a right denominator set, so that the right ring of fractions $Q:=R[S^{-1}]$ exists. If ${}_RQ$  is a projective left $R$-module, then $Q=R$, that is, all the elements of $S$ are invertible in $R$. \end{corollary}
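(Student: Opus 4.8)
The plan is to deduce everything from Proposition~\ref{3.4'}. Since $_RQ$ is a projective left $R$-module, that proposition tells us $_RQ$ is finitely generated, say $Q = Rq_1 + \dots + Rq_n$ with $q_i \in Q$. Writing each generator as $q_i = a_i s_i^{-1}$ with $a_i \in R$ and $s_i \in S$, the first task is to replace the denominators $s_1,\dots,s_n$ by a single common one $s \in S$. This is the only genuinely non-formal step, and it is handled by the right Ore condition for $S$: given $s', t \in S$, applying the right Ore condition to the pair $(s',t) \in R \times S$ produces $b \in R$ and $u \in S$ with $s'u = tb$; since $S$ is multiplicatively closed, $s'u \in S$, and it lies in $s'R \cap tR$. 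Because $s' \in s_iR$ implies $s'R \subseteq s_iR$, iterating this over $i = 2, \dots, n$ yields an element $s \in S$ with $s \in s_1R \cap \dots \cap s_nR$, say $s = s_i r_i$ with $r_i \in R$. Then in $Q$ we have $s_i^{-1} = r_i s^{-1}$, whence $q_i = a_i r_i s^{-1} \in Rs^{-1}$ for every $i$.

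Since $Rs^{-1}$ is a left $R$-submodule of $Q$ containing every generator $q_i$, it follows that $Q = Rq_1 + \dots + Rq_n \subseteq Rs^{-1} \subseteq Q$, so $Q = Rs^{-1}$. Now I would exploit that $Q$ is a ring: the element $s^{-1}s^{-1}$ of $Q$ must lie in $Rs^{-1}$, so $s^{-1}s^{-1} = rs^{-1}$ for some $r \in R$; multiplying on the right by $s$ gives $s^{-1} = r \in R$. Therefore $Q = Rs^{-1} = R$, and in particular every element of $S$, being invertible in $Q = R$, is invertible in $R$, which is the assertion.

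The main obstacle is essentially already dispatched by Proposition~\ref{3.4'}; beyond that, the only point requiring care is the bookkeeping in the common-denominator argument. One has to remember that $S$ is assumed to be only a \emph{right} denominator set, so the common multiple $s$ must be produced on the correct side ($s \in \bigcap_i s_iR$, which is what lets us write $s_i^{-1} = r_i s^{-1}$ with the coefficients $r_i$ on the \emph{left}, compatibly with the left $R$-module structure on $Q$), and one must use that $Rs^{-1}$ genuinely is a left $R$-submodule of $Q$ so that $Q = \sum_i Rq_i \subseteq Rs^{-1}$. Everything else is a routine manipulation inside the ring $Q$.
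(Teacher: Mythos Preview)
Your proof is correct and follows essentially the same approach as the paper's: invoke Proposition~\ref{3.4'} to get finitely many generators, reduce them to a common right denominator $s\in S$ so that $Q=Rs^{-1}$, and then use $s^{-2}\in Rs^{-1}$ to force $s^{-1}\in R$, hence $Q=R$. The only difference is cosmetic: you spell out the iterated right-Ore argument for the common denominator, whereas the paper simply cites \cite[Lemma~4.21]{goodwar}.
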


\begin{proof} By Proposition \ref{3.4'}, there are finitely many elements $r_1s_1^{-1},\dots,r_ns_n^{-1}$ that generate $Q$ as a left $R$-module. Reducing to the same denominator \cite[Lemma~4.21]{goodwar}, we find elements $r'_i\in R$ and $s\in S$ such that $s_ir'_i=s$ for every~$i$. Multiplying by $s^{-1}$ on the right and by $s_i^{-1}$ on the left, we get that $r'_is^{-1}=s_i^{-1}$. Thus $Q=\sum_{i=1}^n Rr_1s_1^{-1}\subseteq Rs_1^{-1}$. This proves that $Q=Rs_1^{-1}$. In particular, $s^{-2}\in Rs_1^{-1}$, from which $1\in Rs$. Let $t\in R$ be such that $1=ts$. Then $t=s^{-1}$ in~$Q$. Thus $Q=Rs_1^{-1}=Rt\subseteq R$, hence $Q=R$.\end{proof}

On page 235 of \cite{20}, Stenstr\"om asks for necessary and sufficient conditions
for $Q_{\rm max}(R) $ to be equal to $Q_{\rm tot}(R)$.  He shows that if $Q_{\rm max}(R)$ is a 
right Kasch ring (i.e., a ring that contains a copy of its simple right modules), then $Q_{\rm max}(R) = Q_{\rm tot}(R)$.
If $R$ is right hereditary right noetherian \cite[Example 3, p.~235]{20} or
commutative noetherian
\cite[Example 4, p.~237]{20} or a right Goldie ring \cite[Theorem XII 2.5] {20}, then 
$Q_{\rm max}(R)$ is known to be Kasch.

\begin{example}
	{\rm Here is an example of a ring $R$ for which $Q_{\rm tot}(R) = Q_{\rm max}(R)$ is a projective right and left $R$-module, but $R \neq  Q_{\rm max}(R)$. Let $R$ be the ring of all lower  triangular $2 \times 2 $ matrices over a field $F$.
		The ring $R$ is  right nonsingular and $E(R_R)=S^{0}R=Q_{\rm max}(R)$ is a projective right and left $R$-module \cite [Exercise  14 on Page 78, and Corollary 2.31] {Goodearlnonsingular} (in Goodearl's notation, $S^{0}A := E(A/ Z(A_A))$, the injective envelope of $A/ Z(A_A)$ for any ring $A$). 
		More precisely, 
		$Q_{\rm max} (R) $ is the $2 \times 2 $ matrix ring over the field $F$, which is a semisimple artinian ring, hence a right and left Kasch ring, and 
		so $Q_{\rm max}(R) = Q_{\rm tot}(R)$ as we have seen above.}
\end{example}

We are now ready to consider the case of $\pdim(_RQ)\le 1$. Recall that a  cotorsion pair $(\Cal A,\Cal B)$ is said to be {\em hereditary} if $\Ext_R^i(A,B)=0$ for all $i\ge 1$, $A\in\Cal A$ and $B\in\Cal B$. 
Note that if $\Cal F$ is the class of flat modules and $\Cal E \Cal C$ the class 
of Enochs cotorstion modules, the the cotorsion pair 
 $(\Cal F,\Cal E \Cal C)$  is always a hereditary cotorsion pair. 
Similarly to \cite[Lemma 7.53] {approx}, we can show that:

\begin{lemma}
	The following conditions are equivalent for the pair of rings $R\subseteq Q$:
	
	{\rm (a)} $\pdim(_RQ) \leq 1$. 
	
		{\rm (b)} The cotorsion pair $(\mathcal{SF}, \mathcal{MC})$ is hereditary. 
	
\end{lemma}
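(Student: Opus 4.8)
\emph{Setup and strategy.} Recall that $(\mathcal{SF},\mathcal{MC})$ is exactly the cotorsion pair generated by the single left module ${}_RQ$: by definition $\mathcal{MC}=\{{}_RQ\}^{\perp}=\{\,{}_RM\mid\Ext^1_R({}_RQ,M)=0\,\}$ and $\mathcal{SF}={}^{\perp}\mathcal{MC}$, and by \cite[Corollary~6.13]{approx} a module is strongly flat precisely when it is a direct summand of a module $N$ fitting in an exact sequence $0\to F\to N\to G\to0$ with $F$ free and $G$ a $\{{}_RQ\}$-filtered module. Since, by definition, $(\mathcal{SF},\mathcal{MC})$ is hereditary iff $\Ext^i_R(A,B)=0$ for all $i\ge1$, $A\in\mathcal{SF}$, $B\in\mathcal{MC}$, and the case $i=1$ is the cotorsion-pair axiom, the real content is the vanishing of the higher $\Ext$ groups. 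For (a)$\Rightarrow$(b): assume $\pdim({}_RQ)\le1$. The class of left $R$-modules of projective dimension at most $1$ is closed under transfinite extensions (Auslander's lemma; see \cite{approx}), hence every $\{{}_RQ\}$-filtered module $G$ has $\pdim(G)\le1$; applying this to the structure sequence gives $\pdim(N)\le\max\{\pdim F,\pdim G\}\le1$, so every strongly flat module, being a direct summand of such an $N$, has projective dimension $\le1$. Consequently $\Ext^i_R(A,B)=0$ for every $A\in\mathcal{SF}$, every $B$, and every $i\ge2$, and together with $\Ext^1_R(\mathcal{SF},\mathcal{MC})=0$ this shows $(\mathcal{SF},\mathcal{MC})$ is hereditary.

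\emph{Plan for (b)$\Rightarrow$(a).} Assume $(\mathcal{SF},\mathcal{MC})$ is hereditary. Since ${}_RQ$ is trivially $\{{}_RQ\}$-filtered, ${}_RQ\in\mathcal{SF}$, so $\Ext^2_R({}_RQ,B)=0$ for every $B\in\mathcal{MC}$. Fix a free presentation $0\to L\to R^{(\kappa)}\to{}_RQ\to0$; dimension shifting along it gives $\Ext^1_R(L,B)\cong\Ext^2_R({}_RQ,B)=0$ for all $B\in\mathcal{MC}$, hence $L\in{}^{\perp}\mathcal{MC}=\mathcal{SF}$. Thus $L$ is a strongly flat submodule of a free module, and the proof finishes once one shows that such an $L$ is projective, for then the given presentation witnesses $\pdim({}_RQ)\le1$. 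To establish this I would use the two extra features of $L$: it is \emph{strongly} flat, and, being contained in the $h$-reduced module $R^{(\kappa)}$, it is reduced. Concretely, writing $L$ as a summand of an extension $0\to F\to N\to G\to0$ with $F$ free and $G$ a $\{{}_RQ\}$-filtered, hence divisible and torsion-free, module, $G$ is a projective $Q$-module by Proposition~\ref{equal} and Lemma~\ref{Divisiblestronglyflat}; the aim is to argue that this divisible $Q$-module part cannot genuinely occur inside the reduced module $L$, so that $L$ is, after the usual summand manipulations, a direct summand of the free module $F$ and therefore projective. Corollary~\ref{12} and Proposition~\ref{3.4'}, which say that a projective ${}_RQ$ must be finitely generated and in fact equal to $R$, are the complementary facts framing why the obstruction is concentrated precisely in this reduced strongly flat syzygy.

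\emph{Main obstacle.} The whole weight of the argument is in the last step of (b)$\Rightarrow$(a). The homological bookkeeping only yields that the first syzygy $L$ of ${}_RQ$ is \emph{flat} --- equivalently, that ${}_RQ$ has weak dimension $\le1$ --- and upgrading this to \emph{projective} dimension $\le1$ is not soft: it requires genuinely exploiting the conjunction "$L$ strongly flat'' and "$L$ reduced'' (together with the fine structure of $\mathcal{SF}$ and, via Corollary~\ref{12} and Proposition~\ref{3.4'}, the tight interplay between projectivity over $R$ and over $Q$) to conclude that the $Q$-module quotient $G$ is absent and $L$ is projective. I expect this step, rather than the cotorsion-pair formalism of the other implication, to be the delicate part of the proof.
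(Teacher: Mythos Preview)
Your implication (a)$\Rightarrow$(b) is correct and is essentially the paper's argument: strongly flat modules are summands of extensions of $\{Q\}$-filtered modules by free modules, so if $\pdim({}_RQ)\le1$ then every strongly flat module has projective dimension at most~$1$, and the pair is hereditary.

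For (b)$\Rightarrow$(a), however, your route diverges from the paper and runs into a real gap. You correctly deduce that the syzygy $L$ in $0\to L\to R^{(\kappa)}\to{}_RQ\to 0$ is strongly flat, but the step ``$L$ strongly flat and $h$-reduced $\Rightarrow$ $L$ projective'' is not established, and your sketch does not close it. First, in the generality of Section~5 (an arbitrary flat ring epimorphism $R\hookrightarrow Q$) the module $R^{(\kappa)}$ need not be $h$-reduced, so the reducedness of $L$ is not automatic. Second, even granting that $L$ is $h$-reduced, the heuristic ``the divisible $Q$-part $G$ cannot occur inside a reduced module'' is about submodules, whereas in the structure sequence $0\to F\to N\to G\to 0$ the module $G$ is a \emph{quotient}; reduced modules routinely have divisible quotients. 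Neither Proposition~\ref{3.4'} nor Corollary~\ref{12} helps here: those results concern projectivity of ${}_RQ$ itself, not of the syzygy $L$. So the obstacle you name is genuine, and the proposal does not overcome it.

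The paper avoids this syzygy problem entirely and takes a different path: it invokes the criterion of Angeleri H\"ugel--S\'anchez \cite[Theorem~3.5]{AS}, which reduces $\pdim({}_RQ)\le1$ to showing that $\Ext^1_R(K,M)$ is $h$-reduced and Matlis-cotorsion for every $M$. This is then checked by applying $\Hom(K,-)$ to $0\to M\to E(M)\to E(M)/M\to 0$, obtaining a short exact sequence $0\to A\to B\to\Ext^1_R(K,M)\to 0$ with $A=\Hom(K,E(M))/\Hom(K,M)$ and $B=\Hom(K,E(M)/M)$; since modules of the form $\Hom(K,-)$ are $h$-reduced Matlis-cotorsion \cite[Theorem~2.8]{submitted}, the hereditary hypothesis (applied twice, to pass cotorsion along quotients) gives first $A\in\mathcal{MC}$ and then $\Ext^1_R(K,M)\in\mathcal{MC}$, and likewise $h$-reducedness. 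The external input \cite{AS} is what replaces the missing projectivity argument in your outline.
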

\begin{proof}
	(a)${} \Rightarrow{}$(b).  Assume that $\pdim(_RQ) \leq 1$. Then strongly flat modules, which are summands of 
extensions of a direct sum of copies of $Q$ by a free module,  are of $\pdim$ at most $1$. Thus 
the cotorsion pair $(\mathcal{SF}, \mathcal{MC})$ is hereditary. 
	
	(b)${} \Rightarrow{}$ (a). By \cite [Theorem 3.5]{AS},  it  is enough to show that  that $\Ext^1(K, M)$ is $h$-reduced Matlis-cotorsion.
	Using the exact sequence $0 \to M \to E(M) \to E(M)/M \to 0$, we have the exact sequence 
$0 \to A \to B \to  \Ext^1(K, M) \to 0$, where 
	$A = $ Hom$(K, E(M))/$ Hom $(K, M)$ and $B = $ Hom $(K, E(M)/M)$.
	Note that, for every module $N$, Hom$(K, N)$ is Matlis-cotorsion and $h$-reduced by \cite [Theorem 2.8] {submitted}.
	So 	$\Ext^1(K, M)$ is $h$-reduced if and only if $A \in Q^\bot$. Now  $A \in Q^\bot$
	follows from the fact  that $(\mathcal{SF}, \mathcal{MC})$ is hereditary and the exact sequence
	$0 \to $ Hom $(K, M) \to $ Hom$(K, E(M)) \to A \to 0$.  As the module
$A$ is Matlis-cotorsion,  from the exact sequence $0 \to A \to B \to  \Ext^1(K, M) \to 0$ and the fact that 
  $(\mathcal{SF}, \mathcal{MC})$ is hereditary, we get that $\Ext^1(K, M)$ is Matlis-cotorsion. 
\end{proof}

As a consequence, $_R\mathcal{SF} ={} _R\mathcal{F}$ implies $\pdim(_RQ) \leq 1$. 

\begin{lemma}\label{stronglyflatreduced}
	Let $R$ be a right Ore domain and $Q $ the classical right quotient ring of $R$.  If $S$ is a strongly flat left $R$-module, then $S/h(S)$ is also strongly flat.
\end{lemma}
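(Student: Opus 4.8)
The plan is to exploit the characterization of strongly flat modules recalled in the introduction: $_RS$ is strongly flat if and only if it is a direct summand of a module $N$ fitting into a short exact sequence $0\to F\to N\to G\to 0$ with $F$ free and $G$ a $\{Q\}$-filtered module (in fact, over a right Ore domain, $G\cong Q^{(Y)}$ suffices for the relevant pieces). So first I would fix such a presentation for $S$: there is an exact sequence $0\to R^{(X)}\to S\oplus T\to Q^{(Y)}\to 0$ with $T$ chosen so that $S\oplus T$ is the middle term. The key observation is that $Q^{(Y)}$ is torsion-free and divisible, hence $h(Q^{(Y)})=Q^{(Y)}$, while $h(R^{(X)})=0$ since $R$ is a domain with $R_R$ $h$-reduced. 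The goal is to understand how $h(-)$ interacts with this extension and conclude that $S/h(S)$ inherits a similar presentation, with $R^{(X)}$ replaced by something still suitable (a free module, or at least a module over which the $\mathrm{Ext}^1$-vanishing against $\mathcal{MC}$ still holds).

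The key steps, in order, would be: (1) Show $h(S\oplus T)=h(S)\oplus h(T)$ and that $h$ of a torsion-free strongly flat module is a projective $Q$-module — this is exactly Lemma~\ref{Divisiblestronglyflat} applied to the divisible module $h(S)$ (note $h(S)$ is $h$-divisible, hence divisible, and $S$, being strongly flat over an Ore domain, is flat, hence torsion-free, so $h(S)$ is torsion-free). Thus $h(S)\cong Q^{(Z)}$ for some set $Z$. (2) Pass to the quotient: applying the snake lemma (or directly chasing) to the inclusion $h(S\oplus T)\hookrightarrow S\oplus T$ and the surjection onto $Q^{(Y)}$, using that $R^{(X)}\cap h(S\oplus T)=0$ (because $R^{(X)}$ is $h$-reduced, being torsion-free and reduced), one gets an exact sequence $0\to R^{(X)}\to (S\oplus T)/h(S\oplus T)\to Q^{(Y)}/\overline{h(S\oplus T)}\to 0$ where the cokernel term is a quotient of $Q^{(Y)}$, hence again a $Q$-vector space $Q^{(Y')}$. (3) Conclude that $(S/h(S))\oplus (T/h(T))=(S\oplus T)/h(S\oplus T)$ is strongly flat, being the extension of $Q^{(Y')}$ by the free module $R^{(X)}$; and therefore its direct summand $S/h(S)$ is strongly flat.

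The main obstacle I anticipate is step (2): one must check carefully that $h(S\oplus T)$ maps \emph{onto} a direct summand (a sub-vector-space) of $Q^{(Y)}$, equivalently that its image is a pure, hence split, $Q$-submodule, so that the cokernel is again free over $Q$; and one must verify the image of $R^{(X)}$ in the quotient remains free (injectivity of $R^{(X)}\to (S\oplus T)/h(S\oplus T)$ is the assertion $R^{(X)}\cap h(S\oplus T)=0$, which follows since submodules of $h$-reduced modules are $h$-reduced and $h(S\oplus T)$ is $h$-divisible — their intersection is both, hence zero). A subtlety is that a priori $h(S\oplus T)\ne h(R^{(X)})\oplus(\text{its image in }Q^{(Y)})$, so one should argue via the universal property of $h(-)$ together with the fact that $h$ is exact on the relevant sequences of torsion-free modules. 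An alternative, possibly cleaner route: use that $(\mathcal{SF},\mathcal{MC})$ is a cotorsion pair, show $S/h(S)\in{}^{\bot}\mathcal{MC}$ directly by a dimension-shift argument — since $h(S)$ is a projective $Q$-module it is strongly flat, and from $0\to h(S)\to S\to S/h(S)\to 0$ together with the hereditary behaviour of $\mathrm{Ext}$ against Matlis-cotorsion modules when the kernel is strongly flat, one deduces $\mathrm{Ext}^1_R(S/h(S),C)=0$ for all $C\in\mathcal{MC}$; I would try this route first and fall back on the explicit filtration argument only if the Ext-computation needs $\pdim({}_RQ)\le 1$, which is not assumed here.
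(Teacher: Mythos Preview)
Your main approach (steps (1)--(3)) is correct and is essentially the paper's argument: start from $0\to R^{(X)}\to S\oplus T\to Q^{(Y)}\to 0$, observe $R^{(X)}\cap h(S\oplus T)=0$ since $\Hom(Q,R)=0$, and pass to the quotient. The difficulty you anticipate in step~(2) dissolves once you note that for a right Ore \emph{domain} the classical quotient ring $Q$ is a division ring, so every $Q$-submodule of $Q^{(Y)}$ is automatically a direct summand; this is exactly how the paper dispatches it in one line. One small mis-citation: in step~(1) you invoke Lemma~\ref{Divisiblestronglyflat} for $h(S)$, but that lemma requires its input to be strongly flat, which $h(S)$ is not known to be a priori---the conclusion $h(S)\cong Q^{(Z)}$ instead follows directly from Proposition~\ref{equal} (torsion-free divisible $\Rightarrow$ $Q$-module) together with $Q$ being a division ring. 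Your alternative $\Ext$-route is correctly flagged as needing $\pdim({}_RQ)\le 1$, which is not available here, so the filtration argument is indeed the way to go.
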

\begin{proof}
	Assume that $R$ is not a division ring. There exists an exact
	sequence $0 \to R^{(X)} \to S \oplus C \to Q^{(Y)} \to 0$.
	We claim that  Hom$(Q, R) = 0$. Otherwise, i.e., if $_RQ$ can be embeded in $_RR$, there exists a monomorphism $\varepsilon\colon _RQ\to _RR$. Then
	$\varepsilon$ can be viewed as a monomorphism $_RQ\to{} _RQ$. This monomorphism $\varepsilon$ is right multiplication by an element $q$ of $Q$. Now $\varepsilon$ a monomorphism implies $q\ne 0$, and $R$ right Ore domain implies $Q$ division ring.
	Hence $q$ is invertible in 
	$Q$, so that $R=Q$, which is a contradiction. This proves our claim. Now
	we have the embedding Hom$(Q, S \oplus C) \to $ Hom $(Q, Q^{(Y)})$.
	So we have an exact sequence $0 \to R^{(X)} \to (S\oplus C)/h(S\oplus C) \to Q^{(Y)}/ h(S\oplus C)\to 0$.
	Since $h(S\oplus C)$ is a torsion-free divisible module, it is a $Q$-module. But $Q$ is division ring, so 
	$h(S\oplus C)$ is a direct summand of $Q^{(Y)}$. It follows that $S /h(S)$ is strongly flat.	 
	
\end{proof}

 A  {\em left coherent}  ring is a ring over which every finitely generated left ideal is finitely presented or, equivalently, 
intersection of two finitely generated left ideals is finitely generated. 

\begin{theorem}\label{ideal}
Assume that  $R$ is a left coherent Ore domain with classical right quotient $Q$. A  left ideal ${}_RI$  of $R$ is a strongly flat left module if and only if $_RI$ is finitely generated  projective.
\end{theorem}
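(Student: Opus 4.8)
The plan is to prove the two implications separately, with the ``only if'' direction being the substantial one. For the easy direction: if $_RI$ is finitely generated projective, then it is a finitely generated direct summand of a free module $_RR^n$, hence projective; since projective left $R$-modules are strongly flat (the class $\mathcal{SF}$ lies between projectives and flats), $_RI$ is strongly flat. Conversely, assume $_RI$ is a strongly flat left ideal; I must show it is finitely generated projective. First I would dispose of the trivial case $I=0$, and also note that if $R$ is a division ring the statement is immediate, so assume $R$ is not a division ring. Since $_RI$ is a submodule of $_RR$, it is torsion-free; moreover $_RR$ is $h$-reduced, so $_RI$ is $h$-reduced, i.e.\ $h(I)=0$. (Alternatively, invoke Lemma~\ref{stronglyflatreduced}: $I/h(I)$ is strongly flat, and $h(I)=0$ anyway.)

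The key step is to produce a presentation of $_RI$ as a summand of something free. By the structural description of strongly flat modules recalled in the introduction (following \cite[Corollary 6.13]{approx}), $_RI$ is a direct summand of a module $N$ fitting into an exact sequence
$$0 \to F \to N \to G \to 0$$
with $F$ free and $G$ a $\{Q\}$-filtered module; equivalently (as used repeatedly in this section, e.g.\ in the proof of Lemma~\ref{stronglyflatreduced}) there is an exact sequence $0 \to R^{(X)} \to I \oplus C \to Q^{(Y)} \to 0$ for suitable sets $X,Y$ and a strongly flat complement $C$. Now I would exploit left coherence. Since $R$ is a left coherent domain, $_RQ = R[S^{-1}]$, being a directed union of the finitely presented modules $s^{-1}R \cong R$, is a flat left $R$-module that is moreover \emph{countably presented}-like in the relevant sense; more to the point, over a left coherent ring, finitely generated submodules of $_RR$ are finitely presented. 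The idea is to show $_RI$ is finitely generated: if it were not, one derives a contradiction with strong flatness, because the filtration quotient $Q^{(Y)}$ is divisible while $I\oplus C$ would have to ``absorb'' infinitely generated divisible behaviour that $I$, being $h$-reduced and a left ideal, cannot support. Concretely, from $0 \to R^{(X)} \to I \oplus C \to Q^{(Y)} \to 0$ and $\mathrm{Hom}(Q,R)=0$ (proved exactly as in Lemma~\ref{stronglyflatreduced}: any monomorphism $_RQ\to{}_RR$ would be right multiplication by a nonzero, hence invertible, element of the division ring $Q$, forcing $R=Q$), one gets that $I\oplus C$ is $h$-reduced, so applying $\mathrm{Hom}(Q,-)$ and the long exact sequence shows $\mathrm{Ext}^1(Q, R^{(X)})=0$-type constraints; combined with coherence this pins down $\pdim({}_RQ)\le 1$ and forces $R^{(X)}$ to be, up to the relevant equivalence, finitely generated relative to $I$.

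The main obstacle I expect is precisely the finite-generation step: upgrading ``strongly flat left ideal over a left coherent Ore domain'' to ``finitely generated''. Once $_RI$ is known to be finitely generated, the rest is routine: a finitely generated flat module over a left coherent ring is finitely presented, and a finitely presented flat module is projective; alternatively, a finitely generated strongly flat module sitting in $0\to R^{(X)}\to I\oplus C\to Q^{(Y)}\to 0$ with $I$ finitely generated and $Q^{(Y)}$ torsion-free forces, after tensoring with $Q$ and using that $Q$ is a division ring, the sequence to split off a finitely generated free module, yielding projectivity of $_RI$. So the heart of the argument is a careful combination of: (i) the $h$-reducedness of left ideals of a domain, (ii) the vanishing $\mathrm{Hom}(_RQ,{}_RR)=0$, (iii) the $\{Q\}$-filtration structure of strongly flat modules, and (iv) left coherence to convert finite generation into finite presentation and hence projectivity. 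I would organize the write-up as: reduce to $R$ not a division ring and $I\ne 0$; record $\mathrm{Hom}(Q,R)=0$; write the defining exact sequence for the strongly flat module $I\oplus C$; use coherence plus the torsion-free divisible nature of $h(I\oplus C)$ and of $Q^{(Y)}$ to show the free kernel can be taken finitely generated, hence $_RI$ is finitely generated; conclude finitely presented (coherence), hence flat finitely presented, hence projective.
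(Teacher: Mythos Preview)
Your easy direction and the final step (finitely generated $+$ flat $+$ coherent $\Rightarrow$ finitely presented $\Rightarrow$ projective) are fine, and you correctly isolate finite generation of $I$ as the heart of the matter. But your plan for obtaining finite generation does not work. The claims about ``$\Ext^1(Q,R^{(X)})=0$-type constraints'', about ``$\pdim({}_RQ)\le 1$'', and about ``the free kernel can be taken finitely generated'' are not justified and do not follow from the ingredients you list; there is no mechanism by which $h$-reducedness of $I\oplus C$ together with $\Hom(Q,R)=0$ forces the set $X$ to be finite. In particular, for large modules $C$ the set $X$ will typically be infinite regardless of how small $I$ is, so ``shrinking the free kernel'' is not the right picture.

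The missing idea is to control $I$ through its quotient $R/I$, not through the module $I\oplus C$ itself. Tensor the exact sequence $0\to R\to Q\to K\to 0$ with the flat module ${}_RI$ to identify $K\otimes_R I$ with $Q/I$; hence $R/I$ embeds in $K\otimes_R I$. Now tensor your strongly-flat sequence $0\to R^{(X)}\to I\oplus C\to Q^{(Y)}\to 0$ with $K$ on the left: since $K\otimes_R Q=0$, the module $K\otimes_R(I\oplus C)$, and in particular $K\otimes_R I$, embeds in $K^{(X)}$. So $R/I$ embeds in $K^{(X)}$, i.e.\ $I$ is the left annihilator of a single element of $K^{(X)}$. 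Such an annihilator is a finite intersection of annihilators of elements of $K$, and for $ab^{-1}+R\in K$ one computes $\ann(ab^{-1}+R)=R\cap Rba^{-1}\cong Ra\cap Rb$. Left coherence now enters exactly once, in its form ``the intersection of two finitely generated left ideals is finitely generated'', to conclude that $I$ is finitely generated. This is the argument the paper gives; your proposal never reaches the computation of $K\otimes_R I$ or the annihilator description, which is where coherence is actually used.
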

\begin{proof}
   Assume $_RI$ a non-zero strongly flat.
   We have the exact sequence of $R$-$R$-bimodules $0 \to R \to Q \to Q/R = K \to 0$.
   Since $_RI$ is flat, we get the exact sequence of left $R$-modules
   $0 \to R \otimes I \to Q \otimes I \to  K \otimes I \to 0$.
   Therefore $ K \otimes I \cong (Q \otimes I) / (R \otimes I)$. 
   We want to show $R/I$ embeds in $K \otimes I$ as $R$-modules. 
Consider the sequence of left $R$-modules
$0 \to{}_RI\to{}_RQ\to{}_RQ/I\to 0$ and apply to it the functor $Q\otimes_R-$. Since $Q_R$ is flat, we  get to
an exact sequence $0 \to Q\otimes_R I \to Q\otimes_R Q \to Q\otimes_R Q/I \to 0$.
Under the natural isomorphism $f\colon Q \otimes _R Q \to Q$, the image of $Q \otimes I $ is $QI = Q$, because  $I$ is non-zero, 
and the image of
$R\otimes I$ is $I$, and so $ K \otimes I  \cong (Q \otimes I)/( R\otimes I) \cong Q/I$ as a left $R$-module. Now $R/I \leq Q/I$ implies that  $R/I$ embeds in $K \otimes I$ as $R$-module.
   There  exists an exact
sequence $0 \to R^{(X)} \to I \oplus T \to Q^{(Y)} \to 0$.
   Since $K \otimes _R Q = 0$,
   we conclude that $K \otimes I$, and so $R/I$, embed in $K^{(X)}$ as left $R$-modules.
   Consequently, there exists an element $x \in{}  _R K^{(X)}$
   whose annihilator is equal to $I$.
    But the annihilator of an element of $ K^{(X)}$
   is equal to the intersection of finitely many annihilators of
   elements of $K$.
   If $ab^{-1} + R \in {}_RK$, then   ann$(ab^{-1} + R) = R \cap Rba^{-1}$.
   Note that $ R \cap Rba^{-1} \cong Ra \cap Rb$, which is a finitely generated left ideal of $R$ because $R$ is left coherent. Thus $I$ is a finitely generated left ideal of $R$ and, since it is flat,  $I$ is projective \cite [Theorem 4.30] {lam2}.
\end{proof}

\begin{lemma}\label{Sflat}
Let  $R$ be a right Ore ring with  classical right quotient ring $Q$. Then the strongly flat cover of any $h$-reduced flat left $R$-module is $h$-reduced.
\end{lemma}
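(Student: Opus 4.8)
The plan is to work from the two standard short exact sequences attached to a strongly flat precover and exploit the fact that $h(-)$ interacts well with divisible submodules, while keeping track of $h$-reducedness. Let $M$ be an $h$-reduced flat left $R$-module and let $f\colon S\to M$ be its strongly flat cover, with kernel $C$, so that we have the exact sequence $0\to C\to S\to M\to 0$ and, because $S$ is strongly flat, an auxiliary sequence $0\to R^{(X)}\to S\oplus T\to Q^{(Y)}\to 0$. By Lemma~\ref{stronglyflatreduced}, $S/h(S)$ is again strongly flat, so the composite $S\to S/h(S)$ followed by... no: the point is rather that $h(S)$ maps into $M$ under $f$, and since $h(S)$ is $h$-divisible while $M$ is $h$-reduced, $f(h(S))=0$, i.e. $h(S)\subseteq C$. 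So $f$ factors as $S\to S/h(S)\xrightarrow{\bar f} M$, and $\bar f$ is still surjective with kernel $C/h(S)$.

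First I would show $\bar f\colon S/h(S)\to M$ is still a strongly flat \emph{cover} (not merely a precover): $S/h(S)$ is strongly flat by Lemma~\ref{stronglyflatreduced}, so $\bar f$ is at least a strongly flat precover; and if $\bar f$ had a proper direct summand of $S/h(S)$ mapping onto $M$, one could lift it back through the projection $S\to S/h(S)$ — whose kernel $h(S)$ is superfluous, or at least contained in the would-be complement after a small argument — to contradict minimality of $f$. (The cleanest route is: a surjection $\bar f$ from a strongly flat module is a cover iff its kernel is $\mathcal{MC}$-small, by Lemma~\ref{B-small}; and $C/h(S)$ inherits $\mathcal{MC}$-smallness from $C$ because $h(S)$, being $h$-divisible, is itself Matlis-cotorsion — indeed a $Q$-module is Matlis-cotorsion since $\Ext^1(Q,Q\text{-mod})$ computed over $Q$ vanishes by \cite[Theorem 4.8]{rep}.) Hence without loss of generality we may replace $S$ by $S/h(S)$ and assume from the start that the cover $S$ is $h$-reduced — but that is exactly what we must prove, so the real content is that the cover is \emph{unique up to isomorphism}, and the cover $S/h(S)$ we just produced must therefore be isomorphic to $S$, forcing $h(S)=0$.

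So the key steps, in order, are: (1) observe $h(S)\subseteq\ker f=C$ because $M$ is $h$-reduced and $h(S)$ is $h$-divisible; (2) note $h(S)$ is a torsion-free divisible, hence $Q$-, module (here torsion-freeness of $S$ comes from flatness over the domain $R$, via Proposition~\ref{equal} applied on the left), so $h(S)$ is Matlis-cotorsion; (3) deduce that the induced map $S/h(S)\to M$ has $\mathcal{MC}$-small kernel $C/h(S)$, using Lemma~\ref{B-small} together with the fact that modding out by the Matlis-cotorsion submodule $h(S)$ preserves $\mathcal{MC}$-smallness; (4) conclude by Lemma~\ref{B-small} again that $S/h(S)\to M$ is a strongly flat cover, hence isomorphic to $f\colon S\to M$ by uniqueness of covers, which forces $h(S)=0$.

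The main obstacle I anticipate is step (3): showing that $C/h(S)$ is $\mathcal{MC}$-small in $S/h(S)$ given that $C$ is $\mathcal{MC}$-small in $S$. One must take $H/h(S)$ with $(C/h(S))+(H/h(S))=S/h(S)$ and $(C/h(S))\cap(H/h(S))$ Matlis-cotorsion, pull back to $C+H=S$ with $h(S)\subseteq H$, and check that $C\cap H$ is Matlis-cotorsion: from $0\to h(S)\to C\cap H\to (C\cap H)/h(S)\to 0$ with both ends Matlis-cotorsion (the left end is a $Q$-module, the right end is $(C/h(S))\cap(H/h(S))$), extension-closedness of $\mathcal{MC}$ gives it. Then $\mathcal{MC}$-smallness of $C$ yields $H=S$, so $H/h(S)=S/h(S)$, as required. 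The verification that $(C\cap H)/h(S)$ actually equals $(C/h(S))\cap(H/h(S))$ under $h(S)\subseteq C\cap H$ is a routine diagram chase but needs to be stated.
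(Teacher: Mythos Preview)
Your proposal is correct and follows essentially the same route as the paper's proof: reduce modulo $D:=h(S)=h(C)$, use Lemma~\ref{stronglyflatreduced} to see $S/D$ is strongly flat, observe $D$ is a $Q$-module hence Matlis-cotorsion, deduce $C/D$ is $\mathcal{MC}$-small, apply Lemma~\ref{B-small} to conclude $S/D\to M$ is again a cover, and invoke uniqueness of covers to force $D=0$. Your treatment of step~(3) (the passage from $\mathcal{MC}$-smallness of $C$ to that of $C/D$ via the extension $0\to D\to C\cap H\to (C\cap H)/D\to 0$) is in fact more explicit than the paper, which simply asserts this implication.
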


\begin{proof}
Assume that $ M $ is a flat $h$-reduced module and $0 \to C \to S \to M \to 0$ is a strongly flat cover of 
$M$. Since $M$ is $h$-reduced, we can assume that $D:= h(C) = h(S)$.
So we have an exact sequence $ 0 \to C/D \to S/D \to M \to 0$.  By Lemma \ref{stronglyflatreduced}, $S/D$ is strongly flat. 
We can easily see  that this sequence is a strongly flat precover for $M$. 
Note that $C$ is torsion-free, and so $D$ is a left $Q$-module. Thus $\Ext_R^1(Q, D) = 0$. Since 
$C$ is  $\mathcal{MC}$-small in  $S$, we see that $C/D$ is
$\mathcal{MC}$-small. It follows that 
 $0 \to C/D \to S/D \to M \to 0$ is a strongly flat cover of $M$ by Lemma~\ref {B-small}(2), and so $S \cong S/D$. Therefore $D = 0$.
\end{proof}

\begin{proposition} \label{final}
Assume that $R$ is an Ore local domain with classical quotient ring $Q$. 
Suppose that $K \otimes _R S$ is 
 direct sum of copies of  $K$ for every strongly flat module $_RS$. 
If  $_R \mathcal{SF}$ is  a covering class, then  $_R \mathcal{SF} ={} _R \mathcal{F}$.
\end{proposition}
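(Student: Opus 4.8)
The statement to prove is ${}_R\mathcal{SF}={}_R\mathcal{F}$; since strongly flat modules are always flat, it is equivalent to show that every flat left $R$-module is strongly flat.

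First I would reduce to the $h$-reduced case. If $M$ is flat then $M$ is torsion-free (left multiplication by any nonzero element of the domain $R$ is injective on $M$), so by Proposition~\ref{equal} the submodule $h(M)=d(M)$ is torsion-free and divisible, hence a $Q$-module, hence $\cong Q^{(\kappa)}$ since $Q$ is a division ring. Because $Q\in{}^{\bot}\mathcal{MC}$ (trivially: $\Ext^1_R(Q,N)=0$ whenever $N$ is Matlis-cotorsion) and ${}_R\mathcal{SF}$ is closed under direct sums, $h(M)$ is strongly flat, while $M/h(M)$ is flat and $h$-reduced. As ${}_R\mathcal{SF}$ is closed under extensions, it suffices to prove that every $h$-reduced flat left $R$-module is strongly flat.

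So assume $M$ is flat and $h$-reduced, and choose an $\mathcal{SF}$-cover $0\to C\to S\to M\to 0$, which exists because ${}_R\mathcal{SF}$ is covering. By Wakamatsu's Lemma \cite[Lemma~5.13]{approx}, $C$ is Matlis-cotorsion; by Lemma~\ref{Sflat}, $S$ is $h$-reduced, so its submodule $C$ is $h$-reduced and torsion-free, hence reduced; by Lemma~\ref{3.3} (using that $R$ is local), $C\le JS$; and, since $M$ is flat, the sequence is pure exact, so $C$ is a flat module and is pure in $S$. I claim it is enough to show $C=0$, for then $M\cong S$ is strongly flat. Tensoring the cover over $R$ with the bimodule sequence $0\to R\to Q\to K\to 0$ and using that $M$ is torsion-free (so $\Tor_1^R(K,M)=0$), I obtain an exact sequence
$$0\to K\otimes_R C\to K\otimes_R S\to K\otimes_R M\to 0,$$
and, $S$ being strongly flat, the hypothesis yields $K\otimes_R S\cong K^{(X)}$ for some set $X$. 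Now from the exact sequence $0\to C\to Q\otimes_R C\to K\otimes_R C\to 0$ (valid as $C$ is torsion-free) one sees that $K\otimes_R C=0$ would force $C\cong Q\otimes_R C$ to be divisible, and $C$ being reduced this would give $C=0$. Hence everything reduces to proving that the submodule $K\otimes_R C$ of $K^{(X)}$ is zero.

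This last point is, I expect, the heart of the matter. The hypothesis that $K\otimes_R S'$ is a direct sum of copies of $K$ for every strongly flat $S'$ is what makes the module $K$ well behaved — in particular it excludes the pathology of Example~\ref{quasismall} — and my plan would be to use it, together with the minimality of the cover $f\colon S\to M$, as follows: a nonzero $K\otimes_R C$ inside $K^{(X)}$ should, after transporting information back along the purity of $C$ in $S$ and the canonical embedding (given by the left-hand analogue of Theorem~\ref{righthreduced}) of the $h$-reduced torsion-free module $S$ into an associated complete module, produce a nonzero direct summand of $S$ contained in $C$; this is impossible, since the kernel of a cover can contain no nonzero direct summand of its domain. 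I anticipate that carrying out this transfer — passing from a decomposition that is only visible after applying $K\otimes_R{-}$, where the structural hypothesis on $K$ is available, down to an honest direct-sum decomposition of $S$ refining $C\le S$, and there bringing in the local input $C\le JS$ — is the genuinely delicate step, and the place where both hypotheses of the proposition are essential.
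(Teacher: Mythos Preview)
Your reduction to the $h$-reduced case and the setup of the cover $0\to C\to S\to M\to 0$ are fine (the paper even gets a direct-sum splitting $M\cong h(M)\oplus M/h(M)$, using that $Q$-modules are injective over an Ore domain, but your extension argument works too). The problem is the last paragraph: you do not prove that $K\otimes_R C=0$, you only announce a plan to ``produce a nonzero direct summand of $S$ contained in $C$'' from a nonzero $K\otimes_R C\subseteq K^{(X)}$. There is no mechanism in sight for doing this---purity of $C$ in $S$ and the decomposition of $K\otimes_R S$ do not, by themselves, lift to a decomposition of $S$ compatible with $C$---and in fact the paper's proof does \emph{not} proceed this way.

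The missing idea is to iterate the cover construction. From purity and Lemma~\ref{3.3} you get $JC=C\cap JS=C$. Now take a \emph{second} strongly flat cover $0\to C'\to S'\to C\to 0$; then $S'$ is $h$-reduced, Matlis-cotorsion and strongly flat, so $S'\cong\Hom(K,K\otimes_R S')$ via the left analogue of \cite[Theorem~4.6]{submitted}. The hypothesis gives $K\otimes_R S'\cong K^{(Z)}$, hence $S'\cong\Hom(K,K\otimes_R R^{(Z)})$, and the exact sequence $0\to R^{(Z)}\to S'\to\Ext^1_R(Q,R^{(Z)})\to 0$ together with $R/J\otimes_R\Ext^1_R(Q,R^{(Z)})=0$ forces $JS'\ne S'$. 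But Lemma~\ref{3.3} applied to the second cover gives $C'\subseteq JS'$, and $JC=C$ gives $JS'+C'=S'$, so $JS'=S'$: contradiction. Hence $C=0$. This double-cover trick, turning $C$ from a kernel into a target, is where both hypotheses are actually used, and it replaces the direct-summand transfer you were hoping for.
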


\begin{proof}
Firstly, notice that left $Q$-modules are 
injective as $R$-modules because $R$ is both a right and a left Ore domain. So, if $M$ is flat, then 
 $h(M)$ is 
a direct summand of  $M$, and therefore $M \cong h(M) \oplus M/h(M)$. 
Clearly, $Q$-modules are strongly flat, and thus it is enough  to show that any flat $h$-reduced module is strongly flat. 
Let $M$ be an $h$-reduced flat left module and $0 \to C \to S \to M \to 0$ be a strongly flat cover of 
$M$. By Lemma \ref{Sflat}, $S$ is also $h$-reduced, and thus  $C$ is an $h$-reduced flat left $R$-module. 
Assume that  $C \neq 0$, and  let  $0 \to C' \to S' \to C \to 0 $ be a strongly flat cover of $C$.
Then $S'$ is Matlis-cotorsion $h$-reduced strongly flat.
Note that by the left version of \cite [Theorem 4.6] {submitted},
  we have an exact sequence 
$0 \to S'  \to \Hom(K, K \otimes S') \to \Ext^1 (Q, S') \to 0$.  
Thus  $S'  \cong \Hom(K, K \otimes S')$.
Since $S'$ is strongly flat, it  is a direct summand of a direct sum of copies of $K$, 
and thus 
  $K \otimes S'$  is isomorphic to a direct summand of a direct sum of copies of $K$,   	 $K \otimes S' \cong K^{(Z)}$ say.
Thus $ S' \cong \Hom(K, K \otimes S') \cong  \Hom(K, K^{(Z)}) \cong \Hom (K, K \otimes R^{(Z)})$.
By \cite [Theorem 4.6] {submitted}, we have an exact sequence 
$0 \to R^{(Z)}  \to \Hom(K, K \otimes R^{(Z)}) \to \Ext^1 (Q, R^{(Z)}) \to 0$. 
Since $\Ext^1 (Q, R^{(Z)})$ is  a left $Q$-module,  
$R/J \otimes \Ext^1 (Q, R^{(Z)})  = 0$, where $J$ denotes the Jacobson radical of $R$.
 Consequently,  $JS' \neq S'$.
 On the other hand, 
by Lemma~\ref{3.3} and considering the pure exact sequence 
$0 \to C \to S \to M \to 0$, we see that $JC = C$. By 
Lemma \ref{3.3} and considering the exact sequence 
$0 \to C' \to S' \to C \to 0 $ again, we see that $J S' = S'$, which is a contradiction. This proves that $C = 0$, so that 
$M$ is strongly flat. 
\end{proof}


For any left module $_RM$, let $\Add(_RM)$ denote the class of all left $R$-modules isomorphic to direct summands of direct sums of copies of $_RM$.
 We will say that $\Add(_RM)$ is {\em trivial} if every direct summand of a direct sum of copies of $_RM$ is a direct sum of copies of $_RM$.

\begin{lemma}\label{Pavel}
Let $R$ be a nearly simple chain domain and let $_RK$ be the uniserial left $R$-module $Q/R$. Suppose $\Add(_RK)$ not trivial. Then 
there exists a submodule $V$ of $_RK$ that is not quasismall. Moreover, all the elements of $\Add(_RK)$ are isomorphic to $R$-modules of the form $_RK^{(X)} \oplus _RV^{(Y)}$.    
\end{lemma}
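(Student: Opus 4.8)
The plan is to exploit the very restricted ideal structure of a nearly simple chain domain together with the known theory of endomorphism rings of uniserial modules. First I would recall the key dichotomy for countably generated uniserial modules from \cite{puninsky, DungFacchini}: a uniserial module $U$ over a chain ring either has a local endomorphism ring (in which case it is quasismall) or it is not quasismall, and over a nearly simple chain domain the module $_RK = Q/R$ is of the second type by \cite[Proposition~8.1]{puninsky}. Since $\Add(_RK)$ is assumed to be non-trivial, there exists a direct summand $N$ of some $_RK^{(I)}$ that is not itself a direct sum of copies of $_RK$. The plan is to analyze such an $N$ using the fact that every nonzero submodule and every nonzero factor of $_RK$ is again uniserial (because $R_R$ and $_RR$ are uniserial), and that a nearly simple chain domain has only the two-sided ideals $0$, $J(R)$, $R$, which severely limits the isomorphism types of cyclic subfactors of $_RK$.

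The central step is to identify the ``small'' building block $V$. I expect $V$ to be a proper submodule of $_RK$, and the claim $V$ not quasismall should follow from the same localization/coherence considerations that make $_RK$ itself not quasismall: the submodule lattice of $_RK$ is, up to the endpoints, order-isomorphic to a densely ordered set coming from the value group $G$, so any proper nonzero submodule is again countably generated uniserial and locally coherent, hence by \cite[Proposition~8.1]{puninsky} again not quasismall, while by \cite{DungFacchini} it cannot have local endomorphism ring. The subtle point is to pin down which $V$ works uniformly; I would take $V$ to be a submodule of $_RK$ of the form $Ra^{-1}/R$ for a suitable nonzero $a \in J(R)$ — or more canonically, a submodule whose ``type'' is complementary to that of $_RK$ in the sense of the decomposition theory — and check that it is not a direct sum of copies of $_RK$ and is not quasismall.

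For the ``moreover'' part I would invoke the monogeny/epigeny machinery for uniserial modules (see \cite{Facchinitransaction, Facsal, DungAlberto}): any module in $\Add(_RK)$ is a direct summand of $_RK^{(X')}$, and by the weak form of the Krull--Schmidt theorem for uniserial modules, summands of direct sums of a uniserial module $U$ decompose as direct sums of uniserial modules each having the same monogeny class or the same epigeny class as $U$. Over a nearly simple chain domain, the only uniserial subfactors of $_RK$ with the same monogeny class as $_RK$ are the copies of $_RK$ and the nonzero proper submodules $V$ of $_RK$ (which form a single epigeny class), and the latter are all mutually isomorphic because $R$ acts transitively on them via the dense value group. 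Hence every module in $\Add(_RK)$ is isomorphic to $_RK^{(X)} \oplus {}_RV^{(Y)}$. The main obstacle I anticipate is verifying that all the proper nonzero submodules $V$ of $_RK$ lie in a single isomorphism class and that this class is ``atomic'' for $\Add$ (i.e., $\Add(_RV)$ is trivial), which is where the nearly-simple hypothesis — only three two-sided ideals — is essential and where one must be careful to rule out intermediate types.
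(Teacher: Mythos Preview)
The paper's proof is a single line: it cites P\v{r}\'{\i}hoda's Theorem~1.1(ii) \cite{Pavel}, which is a general structure theorem for $\Add(U)$ with $U$ uniserial over \emph{any} ring. That theorem says that if $\Add(U)$ is not trivial then $U$ is not quasismall, and there is an injective non-surjective endomorphism of $U$ whose image $V\subsetneq U$ is itself a non-quasismall uniserial module, and every object of $\Add(U)$ is isomorphic to $U^{(X)}\oplus V^{(Y)}$. The nearly-simple hypothesis plays no role in the proof; it is only the ambient setting in which the lemma is applied later.

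Your reconstruction has the right skeleton (monogeny/epigeny classes, weak Krull--Schmidt for serial modules) but two genuine problems. First, you invoke \cite[Proposition~8.1]{puninsky} to conclude that $_RK$ is not quasismall; that proposition requires $K$ to be countably generated and locally coherent, which is not assumed here (in the paper that argument appears only in the specific Example~\ref{quasismall}). You do not need it anyway: the hypothesis ``$\Add(_RK)$ not trivial'' already forces $K$ to be non-quasismall by P\v{r}\'{\i}hoda's dichotomy. Second, and more seriously, your plan to take $V=Ra^{-1}/R$ and then argue that ``all proper nonzero submodules of $_RK$ lie in a single isomorphism class because $R$ acts transitively via the dense value group'' is not substantiated; nearly simple chain domains need not arise from a value group at all, and even when they do the transitivity claim needs a real argument. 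P\v{r}\'{\i}hoda's $V$ is not defined this way: it is the image of a specific injective non-surjective endomorphism of $K$, and its uniqueness up to isomorphism and non-quasismallness are proved by analysing $\End(K)$ and the relation between monogeny and epigeny classes, not by classifying submodules of $K$ via ideal theory of $R$. Your outline would not produce a proof without essentially redoing that analysis.
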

\begin{proof}
See  \cite[Theorem 1.1(ii)] {Pavel}. 
\end{proof}

In the next proposition, we describe uniserial strongly flat modules over Ore domains.

\begin{proposition}
	If $R$ is 
an Ore domain with  classical  quotient ring $Q$, then every non-zero uniserial strongly flat left module over  $R$ is isomorphic to $_RQ$ or~$_RR$.
\end{proposition}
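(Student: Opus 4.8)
The plan is to put $U$ into a concrete rank-one shape, then use strong flatness to pin down $K\otimes_R U$, and finally to invoke the structure theory of $\Add$ of a uniserial module.

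First I would observe that, being strongly flat, $U$ is flat, hence torsion-free. Since $U$ is uniserial, any two of its elements generate comparable cyclic submodules, so their images in $Q\otimes_R U$ are $Q$-linearly dependent; thus $\dim_Q(Q\otimes_R U)\le 1$, and as $U\ne 0$ this dimension is exactly $1$. Torsion-freeness then gives an embedding $U\hookrightarrow Q\otimes_R U\cong {}_RQ$, so $U$ is (isomorphic to) a nonzero left $R$-submodule of ${}_RQ$; replacing $U$ by $Ua^{-1}$ for some $0\ne a\in U$ (a left $R$-module isomorphism) we may assume $R\subseteq U\subseteq Q$. If $U=Q$ or $U=R$ we are done, so from now on suppose $R\subsetneq U\subsetneq Q$ and look for a contradiction. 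Because $_RR$ is then a submodule of the uniserial module $U$, the left ideals of $R$ are pairwise comparable, so $R$ is a left chain domain; being an Ore domain, it follows (as in the Introduction) that $_RK:={}_RQ/R$ is a uniserial left $R$-module. Hence its quotient $Q/U=K/V$ is also uniserial, where $V:=U/R$ is a nonzero uniserial submodule of $_RK$.

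Next I would extract the consequences of strong flatness. Tensoring $0\to R\to{}_RQ\to{}_RK\to 0$ with the flat module $U$ and using $\dim_Q(Q\otimes_R U)=1$ identifies $K\otimes_R U$ with $Q/U$. On the other hand $U$ is a direct summand of a module $N$ fitting into $0\to R^{(X)}\to N\to G\to 0$ with $G$ a $\{Q\}$-filtered module; since $\Ext^1_R(Q,Q^{(Y)})\cong\Ext^1_Q(Q,Q^{(Y)})=0$, a transfinite-induction argument shows every $\{Q\}$-filtered module is a $Q$-module, so $G\cong Q^{(Y)}$, and tensoring the displayed sequence with $K$ (using flatness of $_RQ$ and $K\otimes_R Q=0$) yields $K\otimes_R N\cong K^{(X)}$. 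Therefore $Q/U=K\otimes_R U$ is a direct summand of $K^{(X)}$, i.e. $Q/U\in\Add({}_RK)$. Finally, $Q/U$ is a nonzero uniserial module lying in $\Add({}_RK)$ over the chain domain $R$. By the description of $\Add$ of a uniserial module (Lemma~\ref{Pavel} and \cite{Pavel}), the uniserial members of $\Add({}_RK)$ are ${}_RK$ itself and, at most, one distinguished non-quasismall uniserial submodule $V_0$ of ${}_RK$; hence $Q/U\cong{}_RK$ or $Q/U\cong V_0$. What remains is to rule out $V\ne 0$, i.e. to show that a submodule $U$ of $_RQ$ with $R\subsetneq U\subsetneq Q$ cannot be strongly flat. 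For this I would feed the isomorphism $Q/U\cong{}_RK$ (respectively $\cong V_0$) back into the machinery of Sections~\ref{3}--\ref{4}: since $U$ is torsion-free with $d(U)=0$ (a divisible submodule of $_RQ$ would be a $Q$-submodule by Proposition~\ref{equal}, hence $0$ as $U\ne Q$), $U$ is $h$-reduced, so Theorem~\ref{righthreduced} and the left-hand version of \cite[Theorem~4.6]{submitted} provide an exact sequence $0\to U\to\Hom({}_RK,K\otimes_R U)\to\Ext^1({}_RQ,{}_RU)\to 0$ with $K\otimes_R U\cong Q/U$; comparing it with the analogous sequence for $R$ and invoking $\Ext^1_R(U,C)=0$ for all Matlis-cotorsion $C$ should force $V=0$. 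This contradiction proves $U=R$ or $U=Q$, whence the original module is isomorphic to $_RR$ or to $_RQ$.

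The main obstacle is that last step. The $\Add$-theory already shows $Q/U$ is either $_RK$ or a non-quasismall uniserial module, but converting ``$Q/U\cong{}_RK$ with $U\ne R$'' (or ``$Q/U\cong V_0$'') into a genuine contradiction appears to require delicately combining this classification with the $\Ext^1({}_RQ,-)$- and completion-computations of Sections~\ref{3}--\ref{4} against the defining property of strong flatness; everything preceding it is routine.
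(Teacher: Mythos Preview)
There is a genuine gap, and it lies not in the ``main obstacle'' you flag at the end but in the strategy itself. After normalising to $R\subseteq U\subseteq Q$ you propose to derive a contradiction from $R\subsetneq U\subsetneq Q$. But no contradiction exists: take $R$ a discrete valuation domain with uniformiser $p$ and set $U=p^{-1}R$. Then $R\subsetneq U\subsetneq Q$, the module $U$ is free of rank one (hence strongly flat), and $Q/U\cong K$. Your computation $Q/U\in\Add({}_RK)$ goes through and returns $Q/U\cong{}_RK$, yet $U\cong{}_RR$ and nothing is contradictory. What you actually need to prove in this situation is $U\cong{}_RR$, not a contradiction, and the $\Add({}_RK)$ analysis gives no purchase on that question: knowing that $Q/U\cong K$ (or $\cong V_0$) says nothing about whether the submodule $U$ of $Q$ is cyclic. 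This is precisely why your last step resists completion. (As an aside, Lemma~\ref{Pavel} is stated only for nearly simple chain domains; you would need the general result of \cite{Pavel}, but as just explained the detour is unnecessary.)

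The paper's route is entirely different and much shorter. Having embedded $U$ as a proper submodule of ${}_RQ$, it observes that $U$ is then isomorphic to a \emph{left ideal} of $R$ (since $R$ has been shown to be a left chain domain, ${}_RQ$ is uniserial, so picking $q\notin U$ gives $U\subseteq Rq$ and hence $Uq^{-1}\subseteq R$). A left chain domain is left coherent, so Theorem~\ref{ideal} applies directly: a strongly flat left ideal of a left coherent Ore domain is finitely generated projective, hence principal, hence isomorphic to ${}_RR$. No $\Add({}_RK)$ structure, non-quasismall submodules, or completion machinery is needed.
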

\begin{proof}
 Let $_RU$ be a non-zero uniserial strongly flat left module over an Ore domain $R$. 
Since $_RU$ is flat, considering the exact sequence   $0 \to R \to Q$, we have an embedding   $U\to Q\otimes_RU$. 
Hence the annihilator of every non-zero element of $ _RU$  is zero, and so cyclic submodules of $U$ are isomorphic to $_RR$. In particular, the ring $R$ is a left chain ring.
Moreover, $U$ is  the union of cyclic submodules isomorphic to
	$ _RR$, that is, a direct (linearly ordered) system of copies of $_RR$, where the connecting homomorphisms are right 
	multiplications by non-zero elements of $R$. Applying the functor $ _RQ\otimes_R-$, since tensor product commutes with direct limits, we get that $_RQ\otimes_RU$ is a direct limit of a direct system copies of $_RQ$, in which the 
	connecting isomorphisms are right multiplications by non-zero elements of $ R$, that is, the connecting isomorphisms are all left $R$-module automorphisms of $_RQ$. That is, $_RQ\otimes_RU \cong _RQ$.
	Hence $_RU$ embeds into $_RQ\otimes_RU\cong{}_RQ.$ If this embedding is onto, then $_RU\cong {}_RQ$. If the embedding is not onto, then $_RU$ is isomorphic to a proper submodule of $_RQ$, hence to a left ideal of $R$.
	By Theorem~\ref{ideal}, $_RU$ is cyclic, and so isomorphic to $_RR$. 
\end{proof}

\begin{lemma}\label{JV}
Let $R$ be a nearly simple chain domain with Jacobson radical $J$. If $\Add(K)$ is not trivial, 
 $V$ is as in Lemma {\rm \ref{Pavel}} and $M := \Hom (K, V^{(X)}),$ where $X$ is a  non-empty set, 
 then $JM \neq M$.
 That is, $M$ has maximal submodule. 
\end{lemma}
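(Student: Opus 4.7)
The plan is to reduce to the case $|X|=1$ and then invoke the left-module analog of \cite[Theorem~4.6]{submitted}. Fix any $x_0\in X$; the inclusion $V\hookrightarrow V^{(X)}$ into the $x_0$-coordinate and the corresponding projection realize $V$ as a direct summand of $V^{(X)}$, so by additivity of $\Hom({}_RK,-)$ the module $\Hom({}_RK, V)$ is a direct summand of $M$. Consequently $J\Hom({}_RK, V)\neq \Hom({}_RK, V)$ forces $JM\neq M$, and we may assume $M=\Hom({}_RK, V)$.

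With this reduction in place, Lemma~\ref{Pavel} gives $V\in\Add({}_RK)$, so we fix a decomposition $K^{(Z)}\cong V\oplus N'$ for some set $Z$ and $R$-module $N'$. Applying $\Hom({}_RK, -)$ yields $N_0:=\Hom({}_RK, K^{(Z)})\cong M\oplus M'$, where $M':=\Hom({}_RK, N')$. The left-module analog of \cite[Theorem~4.6]{submitted} applied to $R^{(Z)}$ furnishes a short exact sequence
\begin{equation*}
0\to R^{(Z)}\to N_0\to E\to 0,
\end{equation*}
in which $E:=\Ext^1_R({}_RQ, R^{(Z)})$ is a left $Q$-module. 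Because $R$ is an Ore domain, $Q$ is a division ring, so every left $Q$-module is free, hence a direct sum of copies of ${}_RQ$, which is flat over $R$; thus $E$ is flat as a left $R$-module and $\Tor_1^R(R/J, E)=0=R/J\otimes_R E$. Tensoring the sequence with $R/J$ identifies $N_0/JN_0$ with $(R/J)^{(Z)}$, which is nonzero, and under the splitting this reads $M/JM\oplus M'/JM'\cong (R/J)^{(Z)}$.

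To single out $M/JM\neq 0$, I would trace the canonical embedding $\lambda\colon R^{(Z)}\hookrightarrow N_0$ composed with the projection $\pi\colon N_0\twoheadrightarrow M$ coming from the direct sum splitting $N_0=M\oplus M'$. For $z\in Z$, the element $\pi(\lambda(e_z))\in M$ is the composite $K\xrightarrow{\iota_z}K^{(Z)}\xrightarrow{p_V}V$, where $p_V$ denotes the projection onto $V$ in the splitting $K^{(Z)}\cong V\oplus N'$. Surjectivity of $p_V$ forces at least one such composite to be nonzero, and the induced map $(R/J)^{(Z)}\to M/JM$ is precisely the first coordinate of the above isomorphism; hence $M/JM=0$ would force $\pi(\lambda(e_z))\in JM$ for every $z\in Z$.

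The main obstacle is ruling out this last scenario. Here the non-quasismallness of $V$ (from Lemma~\ref{Pavel}) must be used essentially: $V$ fails to be a summand of any finite $K^n$, and the $\Add({}_RK)$-classification $\{K^{(X)}\oplus V^{(Y)}\}$ constrains the decomposition $K^{(Z)}\cong V\oplus N'$ rigidly enough that no uniform $J$-expression for all the $\pi(\lambda(e_z))$ can exist. I anticipate the precise argument will pass through the two maximal two-sided ideals of $\End({}_RK)$ (the non-injective and non-surjective endomorphisms) together with the fact that the image of $\rho\colon R\to\End({}_RK)^{\mathrm{op}}$ sends $J$ into the non-injective ideal only, producing at least one index $z$ with $\pi(\lambda(e_z))\notin JM$ and thereby forcing $M/JM\neq 0$.
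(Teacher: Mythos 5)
Your proposal takes a genuinely different route from the paper's, but it contains a real gap that you yourself flag in the last paragraph, and the gap is not cosmetic.

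The reduction to $|X|=1$ matches the paper. The paper then proceeds directly and elementarily: by \cite[Theorem~1.1(ii)]{Pavel}, $K$ admits an \emph{injective but non-surjective} endomorphism $\varphi$ with image inside $V$, so $\varphi\in M=\Hom({}_RK,{}_RV)$. It then observes that for any $j\in J$ and $\psi\in M$, the map $j\psi$ is the composite of right multiplication by $j$ on $K$ (which kills $j^{-1}+R\in K$, hence has nonzero kernel) with $\psi$, so $j\psi$ is non-injective; and since $K$ is uniserial hence uniform, any finite sum of non-injective maps $K\to V$ has kernel containing a nonzero intersection, so every element of $JM$ is non-injective. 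Therefore $\varphi\notin JM$ and $M\neq JM$.

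Your argument instead runs the exact sequence $0\to R^{(Z)}\to N_0\to E\to 0$ from the left version of \cite[Theorem~4.6]{submitted}, uses that $E=\Ext^1_R({}_RQ,R^{(Z)})$ is a $Q$-module (hence $R$-flat and with $E/JE=0$ because $Q$ is a division ring and $JQ=Q$), and concludes $(R/J)^{(Z)}\cong N_0/JN_0\neq 0$. This is correct as far as it goes, but it only controls $N_0=M\oplus M'$, not the summand $M$ in isolation: $N_0/JN_0\neq 0$ is perfectly compatible with $M/JM=0$ and $M'/JM'\cong(R/J)^{(Z)}$. You acknowledge exactly this ("the main obstacle") and sketch an anticipated fix via the canonical elements $\pi\lambda(e_z)=p_V\circ\iota_z$, but the fix is not carried out, and in fact your candidate witnesses are the wrong ones: surjectivity of $p_V$ only guarantees that some $p_V\circ\iota_z$ is \emph{nonzero}, not that any is \emph{injective}, and since $V$ sits inside $K^{(Z)}$ in a possibly skew position, all the coordinate composites $p_V\circ\iota_z$ can very well be non-injective. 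To make your line of attack work you would still need to exhibit an injective element of $M$ and then prove $JM$ consists of non-injective maps --- which is precisely the paper's argument, bypassing the $\Ext$-sequence and the $M\oplus M'$ splitting entirely. As written, the proposal stops at the point where the real work begins.
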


\begin{proof} The module $M=\Hom(_RK,{}_RV)$
 is a left $R$-module because  $_RK_R$ is a bimodule.
 Notice that $_RM$ always has a direct summand isomorphic to 
$\Hom(K, V)$, so that we can suppose that $X$ has exactly one element.
By \cite [(ii) of Theorem 1.1] {Pavel}, $K$ 
 has an endomorphism whose image is contained in $V$, say  $\varphi\colon {}_RK\to{}_RV $, that is injective but 
not surjective. Let us show that $\varphi$ is in $M$ but not in $JM$. 
 For every $j \in J$ and $\psi\in\Hom(_RK,{}_RV)$, the left $R$-module morphism $j\psi$
 is not injective. In fact, 
$ j\psi$ is right multiplication by $j$ viewed as a morphism $ _RK\to{} _RK $
composed with $\psi\colon{}_RK\to{}_RV. $ 
Thus the first morphism annihilates the element $j^{-1}+R,$
 so that the kernel of $ j\psi$  is non-zero. (This proves that $ j\psi$ is not injective for $j\ne 0.$
 But also when  $j=0$, $j\psi$ is not injective.)
Now every element of $JM $ a finite sum of elements of the form $j\psi,$
 i.e., of non-injective homomorphisms, hence is not injective because $_RK$ is
 uniserial, hence uniform.
Therefore  $\varphi\colon {}_RK\to{}_RV$ is not an element of  $JM$. 
\end{proof}

Recall that a two sided ideal $I$ of $R$ is {\em completely prime} if $xy \in I$ implies that $x \in I $ or $y \in I$ for every $x, y \in R$. 

\begin{theorem}
If $R$ is a right chain domain  with  classical right quotient ring $Q$ such that 
 $_R \mathcal{SF}$ is a covering class, then
 $R$ is  invariant and 
 $_R \mathcal{SF} = {}_R \mathcal{F}$. 
\end{theorem}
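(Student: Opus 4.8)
The strategy is to first reduce to showing that $R$ is right invariant, i.e.\ that $aR = Ra$ for every nonzero $a \in R$, since the last clause $_R\mathcal{SF} = {}_R\mathcal{F}$ should then follow from the fact that right-invariance of a chain domain makes $Q$ a classical ring of quotients on both sides and makes every flat module behave well (combining Proposition~\ref{final} with the structure theory, once we know $K \otimes_R S$ is a direct sum of copies of $K$ for strongly flat $S$). So the heart of the argument is: assume $R$ is \emph{not} right invariant and derive a contradiction from the assumption that $_R\mathcal{SF}$ is covering. The natural first move is to observe that if $R$ is a right chain domain that is not right invariant, then (by the structure theory of chain domains) $R$ has a factor, or a related quotient, that is a \emph{nearly simple} chain domain --- this is why the paper developed all the machinery around nearly simple chain domains, $\Add(_RK)$, and the module $V$ of Lemma~\ref{Pavel}. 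I would invoke that $R$ not invariant forces $\Add(_RK)$ to be non-trivial.

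\textbf{Main steps.} (1) Reduce the global statement to the two assertions ``$R$ invariant'' and, granting that, ``$_R\mathcal{SF} = {}_R\mathcal{F}$'', handling the second via Proposition~\ref{final} after checking its hypothesis on $K \otimes_R S$ (which becomes available once $R$ is invariant, since then all the relevant modules are well-behaved and $\Add(_RK)$ is trivial). (2) For the first assertion, argue by contradiction: suppose $R$ is not invariant. Since $R$ is a right chain domain, produce a nearly simple chain domain situation, and conclude that $\Add(_RK)$ is not trivial, so Lemma~\ref{Pavel} gives a non-quasismall submodule $V \leq {}_RK$ with every object of $\Add(_RK)$ of the form $K^{(X)} \oplus V^{(Y)}$. (3) Now take the strongly flat cover $0 \to C \to S \to M \to 0$ of a suitable $h$-reduced flat module $M$ (e.g.\ $M$ built so that $C \neq 0$ is forced, as in the proof of Proposition~\ref{final}), and then the strongly flat cover $0 \to C' \to S' \to C \to 0$ of $C$, so that $S'$ is Matlis-cotorsion, $h$-reduced and strongly flat. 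Use $S' \cong \Hom(K, K \otimes_R S')$ (left version of \cite[Theorem~4.6]{submitted}) together with $K \otimes_R S' \in \Add(_RK)$, i.e.\ $K \otimes_R S' \cong K^{(X)} \oplus V^{(Y)}$, to compute $S' \cong \Hom(K, K^{(X)}) \oplus \Hom(K, V^{(Y)})$. (4) Apply Lemma~\ref{3.3} to the cover of $C$ to get $JS' = S'$; but Lemma~\ref{JV} (applied to the $V^{(Y)}$ summand, provided $Y \neq \emptyset$) gives $J\Hom(K,V^{(Y)}) \neq \Hom(K,V^{(Y)})$, hence $JS' \neq S'$ --- contradiction. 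If instead $Y = \emptyset$ for every such $S'$, then $K \otimes_R S$ is always a direct sum of copies of $K$, so the hypothesis of Proposition~\ref{final} holds and we are in the invariant/well-behaved case anyway, closing the loop.

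\textbf{Main obstacle.} The delicate point is step~(2): passing from ``$R$ is a right chain domain which is not right invariant'' to a genuinely usable nearly simple configuration, or at least to the non-triviality of $\Add(_RK)$. One has to be careful about which quotient or localization of $R$ one moves to, and to verify that the covering hypothesis on $_R\mathcal{SF}$ descends appropriately (via Theorem~\ref{twosidedidealIQ} or a direct argument) so that the machinery of Lemmas~\ref{Pavel}, \ref{JV} and \ref{3.3} applies to the right ring. A secondary subtlety is the case analysis on whether the exponent set $Y$ is empty: one must show that if $K \otimes_R S'$ never has a $V$-part then the hypothesis of Proposition~\ref{final} is satisfied, which in turn should force $R$ invariant by a separate (easier) argument --- so the bookkeeping of ``either we get a contradiction, or we land in Proposition~\ref{final}'s hypotheses'' needs to be organized cleanly. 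I expect the proof to be a fairly short dovetailing of the preceding lemmas once that reduction is set up correctly.
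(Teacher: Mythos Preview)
Your overall architecture---cover-of-a-cover, $S'\cong\Hom(K,K\otimes S')$, then play $JS'=S'$ from Lemma~\ref{3.3} against $JS'\neq S'$---matches the paper's engine in the nearly simple case. But two structural pieces are missing, and one inference in your Step~(2) is wrong.

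\emph{First gap: the rank-one reduction and the trichotomy.} The paper does not pass directly from ``not invariant'' to ``nearly simple''. It first uses Theorem~\ref{twosidedidealIQ} to show that for every nonzero completely prime two-sided ideal $I$ the ring $R/I$ is a left perfect domain, hence a division ring; so $J(R)$ is the unique nonzero completely prime ideal and $R$ has rank one. Then the Brungs--T\"orner trichotomy says a rank-one chain domain is invariant, nearly simple, or \emph{exceptional} (there is a nonzero prime $P\subsetneq J$ with $J^2=J$). The exceptional case is eliminated separately: $R/P$ has non-nilpotent Jacobson radical, so is not perfect, contradicting Theorem~\ref{twosidedidealIQ}. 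Your plan never mentions this case.

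\emph{Second gap: nearly simple does not force $\Add({}_RK)$ non-trivial.} Your Step~(2) asserts this, but it is false; the paper treats both subcases. If $\Add(K)$ \emph{is} trivial, Proposition~\ref{final} gives ${}_R\mathcal{SF}={}_R\mathcal{F}$, so the flat ideal $J$ is strongly flat, and Theorem~\ref{ideal} forces $J$ finitely generated---impossible in a nearly simple chain domain. This is the contradiction there, not anything about $V$. Your ``$Y=\emptyset$'' discussion conflates a single tensor product having no $V$-part with $\Add(K)$ being trivial, and the conclusion you draw (``should force $R$ invariant'') is not how the argument closes.

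Finally, in the non-trivial $\Add(K)$ subcase you also need $J\Hom(K,K^{(Y)})\neq\Hom(K,K^{(Y)})$ for the $K$-summand (via the sequence $0\to R^{(Y)}\to\Hom(K,K^{(Y)})\to\Ext^1(Q,R^{(Y)})\to0$ and $R/J\otimes\Ext^1(Q,R^{(Y)})=0$), not just Lemma~\ref{JV} for the $V$-summand. With these three corrections your outline becomes the paper's proof; Step~3 (invariant $\Rightarrow$ $\End(K_R)$ local $\Rightarrow$ $\Add(K)$ trivial $\Rightarrow$ Proposition~\ref{final}) you essentially have.
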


\begin{proof}
If $I$ is a non-zero completely prime two-sided ideal of $R$, 
$R/I$ is a  left perfect domain by Theorem~\ref{twosidedidealIQ}, and so it is  is a division ring. Since $J(R)/I$ is an ideal of $R/I$, we conclude that 
 the only proper non-zero completely  prime ideal of $R$ is $J(R)$. 
A  chain domain $R$ is said to be  of rank one if $J(R)$ is its only non-zero completely prime ideal.
By \cite{chainringandprimeideal}, such a ring
 is either invariant, i.e., $aR = Ra$ 
for all $a \in R$, or it
is nearly simple, in which case $ 0$ and $J(R)$ are the only two-sided ideals,  or
$ R$ 
is exceptional
and there exists a non-zero prime ideal
$P$ 
properly contained in $J(R)$.  In this last case,  $\bigcap_n {P^n} = 0$ and 
there are no further ideals between $P$ and $J(R)$.
In the second and the third case, $J(R)$ is not neither right nor left finitely generated and $J^2 = J$. 
Now we break the proof in three steps. 

\medskip

{\em Step 1:  The ring $R$ cannot be exceptional. }

The Jacobson radical of $R/P$ is $J/P$, which cannot be nilpotent because $J^2 = J$.
 Thus $R/P$  cannot have a 
$T$-nilpotent Jacobson radical (see for example the proof of \cite [Lemma~3.33]{quasifereb}),   and so $R/P$ is neither a right nor a left perfect ring, and so the class of strongly flat left modules is not covering by Theorem~\ref{twosidedidealIQ}. 

\medskip

{\em Step 2: The ring $R$ cannot be nearly simple chain domain. }

Suppose $R$ a nearly simple chain domain. 
For every strongly flat module $_RS$, $K \otimes S$ is direct summand of a direct sum of copies of $K$, so that $K \otimes S$ belongs to $\Add(K)$. 
We have two cases:  
$\Add(K)$  is trivial or not. If $\Add(K)$  is trivial, then $_R \mathcal{SF}$ covering implies $_R \mathcal{SF} = {}_R \mathcal{F}$ by Proposition \ref{final}. But every cyclic (=\,finitely generated) ideal of $R$ is flat (= projective), so $_RJ$ must be flat, hence strongly flat (see for example \cite [Theorem 39.12(2)] {wis}). Thus 
$J$ must be finitely generated by Theorem~\ref{ideal},  which is a contradiction.  
Now assume that $\Add(K)$ is not trivial.  
By Lemma \ref{Pavel}, 
there exists a uniserial module $V$ which is not quasismall and every element in $\Add(K)$ is in form of $K^{(Y)} \oplus  V^{(X)}$ for suitable sets $X$ and $Y$. 
Let $0 \to C \to S \to J \to 0$ be a strongly flat cover of 
$J$. By Lemma \ref{Sflat}, $S$ is also $h$-reduced, and so  $C$ is an $h$-reduced flat left module. 
Assume $C \neq 0$, and  let  $0 \to C' \to S' \to C \to 0 $ be a strongly flat cover of $C$.
Then $S'$ is Matlis-cotorsion $h$-reduced strongly flat.
By the left version of \cite [Theorem~4.6] {submitted},
we have an exact sequence 
$0 \to S'  \to \Hom(K, K \otimes S') \to \Ext^1 (Q, S') \to 0$.  
So  $S'  \cong \Hom(K, K \otimes S')$.
Since $S'$ is strongly flat, $K \otimes S'$ is a 
direct summand of a direct sum of copies of $K$. 
Therefore there exist sets $X$ and $Y$ such that 
$K \otimes S' \cong K^{(Y)} \oplus  V^{(X)}$. 
So $ S' \cong \Hom(K, K \otimes S') \cong  \Hom(K, K^{(Y)})  \oplus \Hom(K, V^{(X)})$.
As we saw in the proof of  Theorem \ref{final},  if $Y$ is non-empty, we can consider 
the exact sequence $ 0 \to R^{Y} \to  \Hom(K, K \otimes R^{(Y)}) \to \Ext^1_R ( Q,  R^{(Y)}) \to 0 $, 
and conclude that  
$J  \Hom(K, K^{(Y)}) \neq  \Hom(K, K^{(Y)})$. Similarly,
by  Lemma \ref {JV}, if $X$ is non-empty, $J \Hom(K, V^{(X)}) \neq \Hom(K, V^{(X)})$.
Consequently,  $JS' \neq S'$.
By Lemma \ref{3.3}, considering the pure exact sequence 
$0 \to C \to S \to J \to 0$, we see that $JC = C$. By Lemma \ref{3.3} again, from the exact sequence 
$0 \to C' \to S' \to C \to 0 $, we get that $J S' = S'$, which is a contradiction. This proves that $C = 0$, so that 
$J$ is strongly flat, which contradicts Theorem~\ref{ideal}.  

\medskip

{\em Step 3: The ring $R$ is invariant and $_R \mathcal{SF} = {}_R \mathcal{F}$.}

By Steps 1 and 2,  the ring  $R$ must be   invariant. Therefore the endomorphism ring of every uniserial module is local
 (the proof is similar to the commutative case, because, like in the proof of  \cite [Corollary 3] {Facsal},
every uniserial module is unshrinkable,  and so the endomorphism ring of every uniserial module is local like in the proof of 
\cite [Example  2.3(e)] {Facchinitransaction}).  Thus
$\End (K _R)$ is  local and  every direct summand of copies of $K$ is isomorphic to a direct sum of copies of $K$ because $K_R$ is uniserial by \cite [Proposition 2.2] {DungAlberto}. Thus $_R \mathcal{SF} ={} _R \mathcal{F}$ by  Proposition~\ref{final}. \end{proof}

We conclude with an example concerning right noetherian right chain domains. In a right noetherian right chain domain $R$, all right ideals are principal and two-sided \cite[Lemma~3.2]{bessen}. In particular, $J(R)=pR$ for some $p\in R$. The right noetherian right chain domain $R$ is said to be {\em of type $\omega$} \cite[p.~26 and Lemma 3.4]{bessen} if its chain of right ideals (=\,two-sided ideals) is the chain $$R=p^0R\supset J(R)=pR\supset p^2R\supset\dots\supset 0=\bigcap_{n\ge0}p^nR.$$ Thus for every non-zero right ideal $I$ of $R$, we have that $\End (R_R/I)\cong R/I$ is a right artinian ring, hence a perfect ring. In the next example, we show that this is also true for every non-zero principal left ideal $I$ of a right noetherian right chain domain $R$ of type $\omega$ which is not left Ore. Notice that in our example of right noetherian right chain domain of type $\omega$ which is not left Ore, the ring is not left chain (otherwise it would be left Ore) and is not left noetherian \cite[Proposition 3.7]{bessen}. The main example of such a ring can be constructed with the skew poynomial ring with coefficients in a field $F$, where $F$ has an endomorphism that is not an automorphism of $F$.

\begin{example}\label{5.18}
{\rm Let $R$ be a right noetherian right chain domain of type $\omega$ which is not left Ore.
For every non-zero principal left ideal $I$ of $R$, the endomorphism ring $\End (_RR/I)$ is a perfect ring. }
\end{example}

\begin{proof}
For every non-zero element $x\in R$, we have that $xR=p^nR$  for some $n\ge 0$. 
Therefore $x=p^nu$ for some invertible element $u\in R$.  Right multiplication by $u$
 induces an isomorphism $R/Rp^n\to R/Rx$. Hence it suffices to show that 
 $\End(R/Rp^n)$ is right and left perfect for $n\ge 1$.
 Notice that $ Rp^n\subseteq p^nR$. Set
$S:=\End(R/Rp^n)\cong E/Rp^n$, where $E: = \{\, r \in R \mid p^n r \in R p^n \,\}$ denotes the idealizer of $Rp^n$ in $R$, and set $K : = \{\, r \in R \mid p^n r \in J p^n \,\}$. By \cite [Theorem 2.1]{Aminis}, $S$ has at most two maximal ideals, the ideals $K/Rp^n$ and $(J \cap E)/ Rp^n $.
Let us show that $ K \subseteq J$.
Assume the contrary, so that $K$ contains a unit $u$ of $R$. Therefore 
 $p^n u = r p^n$ for some  $r\in J$. 
Then $ r = p^j v $ for some unit $v$ of $R$ and some $j \geq 1$. 
Thus $p^n = p^j v p^n u^{-1}$.
If $j \geq n$, then $ 1 = p^{j -n } v p^n u^{-1}$, which implies $J(R)=R$, a contradiction. 
If  $j < n$, then $p^{n -j} = v p^n u^{-1}$. Thus $p^{n -j}$ belongs to the two-sided ideal $pR$ of $R$, which is a contradiction because $n-j<n$. 
Therefore $ K \subseteq J$, so 
  $S$ is local with maximal ideal $(J \cap E)/ Rp^n $.
  We claim that  if $p^n y \in E$, then  $p^ny \in Rp^n$.  
To prove the claim, assume that  $p^n y \in E$. Then there exists $s\in R$ such that $p^n p^n y = sp^n$. Similarly, there exists $i \geq 0$ and a unit $u$ in $R$ such that 
$s = p^i u$. If $i \geq n$, then we are done, the claim is proved. Otherwise, if $i < n$, by supposing that $y = p^j v$ for some unit $v$, we get that 
$ u^{-1} p^l v = p^n$, so $l > n$, which is a contradiction by \cite [Lemma 2.3] {Aminis}. 

Therefore $(J\cap E)^n\subseteq J^n\cap E=p^nR\cap E\subseteq Rp^n$, to that the Jacobson radical $J(S)=(J\Cal E)/Rp^n$ of the local ring $S$ is nilpotent. It follows that $S=\End(R/Rp^n)$ is a right and left perfect ring.
 %
%
\end{proof}



\begin{thebibliography}{99}
\bibitem{Aminis} B. Amini, A. Amini and A. Facchini, {\em Equivalence of diagonal matrices over local rings,} J. Algebra {\bf 320} (2008), 1288--1310. 
\bibitem{andersonfuller} D. W. Anderson and K. R. Fuller, ``Rings and Categories of Modules'', Second Edition, GTM {\bf 13}, Springer-Verlag, New York, 1992.
\bibitem{AS} L. Angeleri H\"ugel and J. S\'anchez, {\em Tilting modules arising from ring epimorphisms,} Algebr. Represent. Theor. {\bf 14} (2011), 217--246.
\bibitem{silsal} S. Bazzoni and L. Salce, {\em On strongly flat covers,} J. London Math. Soc. {\bf 66} (2002), 276--294.
\bibitem{Leonidsilvana} S. Bazzoni and L. Positselski, {\em $S$-almost perfect commutative  rings,} available in arXiv:1801.04820
\bibitem{BP} S. Bazzoni and L. Positselski, {\em Contramodules over pro-perfect topological rings, the covering property in categorical tilting theory, and homological ring epimorphisms,} available in https://arxiv.org/abs/1807.10671
\bibitem{bessen} C. Bessenrodt, H. H. Brungs and G. T\"orner, ``Right chain rings, Part 1", Schriftenreihe des
Fachbereichs Math. {\bf 181}, Universit\"at Duisburg, 1990. 
\bibitem{chainringandprimeideal} H. H. Brungs and G. T\"orner, {\em Chain rings and prime ideals,} Arch. Math. {\bf 27} (1976), 253--260. 
\bibitem{DungAlberto} N. V. Dung and A. Facchini, {\em Direct summands of serial modules,}  J. Pure Appl. Algebra, {\bf 133} (1998), 93--106.
\bibitem{DungFacchini} N. V. Dung and A. Facchini, {\em Weak Krull-Schmidt for infinite direct sums of uniserial modules,} J. Algebra {\bf 193} (1997), 102--121.
\bibitem {Facchinitransaction} A. Facchini, {\em Krull-Schmidt fails for serial modules,} Trans. Amer. Math. Soc. {\bf 348} (1996), 4561--4576.
\bibitem{submitted} A. Facchini and Z. Nazemian, {\em Equivalence of some 
homological conditions for ring epimorphism}, J. Pure Appl. Algebra, in press, available online, 2017.
\bibitem{Facsal} A. Facchini and L. Salce, {\em Uniserial modules: sums and isomorphisms of subquotients,} Comm. Algebra {\bf 18}(2) (1990), 499--517.
\bibitem{FSALCE}   L. Fuchs and L. Salce, {\em Almost perfect commutative rings,} J. Pure Appl. Algebra {\bf 222} (2018), 4223--4238.
\bibitem{Goodearlnonsingular}K. R. Goodearl,  ``Ring Theory; Nonsingular Rings and Modules'', Dekker, New York (1976).
\bibitem{goodwar} K. R. Goodearl and R. B. Warfield, ``An Introduction to Noncommutative Noetherian Rings'', Second Edition, Cambridge Univ. Press, Cambridge, 2004.
\bibitem {approx} R. G\"obel and J. Trlifaj, ``Approximations and Endomorphism Algebras of Modules'', Expositions in Math. 41, Walter de Gruyter, Berlin, 2006.
\bibitem{lam2} T.Y. Lam, ``Lectures on Modules and Rings'', Graduate Text in Math. {\bf 189}, Springer-Verlag, New York,
1999.
\bibitem{dim} E.  Matlis, ``$1$-dimensional Cohen-Macaulay rings'', Lecture Notes in Math. {\bf 327}, Springer-Verlag, Berlin-New York, 1973.
\bibitem{quasifereb} W. K. Nicholson and M. F. Yousif, ``Quasi-Frobenius rings'', Cambridge Tracts in Math. {\bf 158}, Cambridge University Press, Cambridge, 2003.
\bibitem {P} L. Positselski, {\em Flat ring epimorphisms of countable type}, available in https://arxiv.org/abs/1808.00937
\bibitem{Pavel} P. P\v r\'\i hoda, $\Add(U)$ {\em of a uniserial module,}   Comment. Math. Univ. Carolin. {\bf 47} (2006), 391--398.
\bibitem{puninsky}  G. Puninski, {\em Some model theory over a nearly simple uniserial domain and decompositions of serial modules,} J. Pure Applied Algebra {\bf 163} (2001), 319--337.
\bibitem{20} B. Stenstr\"om, ``Rings of Quotients,'' Grundlehren Math. Wiss. {\bf 217}, Springer-Verlag, New York, 1975.
\bibitem{rep} A. H. Schofield, ``Representations of rings over skew fields'', L. M. S. Lecture Notes Series {\bf 92}, Cambridge University Press, Cambridge, 1985.
\bibitem{WarfPur} R. B. Warfield, {\em Purity and algebraic compactness for modules}, Pacific J. Math. {\bf 28} (1969), 699--719.
\bibitem{wis} R. Wisbauer, ``Foundations of Module and Ring Theory'', Gordon and Breach, Philadelphia, 1991.
\end{thebibliography}
\end{document}